\newenvironment{proof}[1][Proof]{\noindent\textbf{#1.} }{\ \rule{0.5em}{0.5em}}
\newtheorem{theorem}{Theorem}[section]
\newtheorem{definition}{Definition}[section]
\newtheorem{example}{Example}[section]
\newtheorem{lemma}[theorem]{\quad Lemma}
\numberwithin{figure}{section} \numberwithin{equation}{section}
\makeatletter \setlength{\textwidth}{15.0cm}
\newcommand{\bm}[1]{\mbox{\boldmath{$#1$}}}
\newcommand{\allmodesymb}[2]{\relax\ifmmode{\mathchoice
{\mbox{\fontsize{\tf@size}{\tf@size}#1{#2}}}
{\mbox{\fontsize{\tf@size}{\tf@size}#1{#2}}}
{\mbox{\fontsize{\sf@size}{\sf@size}#1{#2}}}
{\mbox{\fontsize{\ssf@size}{\ssf@size}#1{#2}}}}
\else
\mbox{#1{#2}}\fi}
\begin{document}
\date{}
\author{Hengfei Ding$^{1,}$\thanks{E-mail:
dinghf05@163.com}
\;\;Changpin
Li$^{2,}$\thanks{E-mail:
lcp@shu.edu.cn}
\\
 \small \textit{1. School of Mathematics and Statistics, Tianshui
Normal University, Tianshui 741001, China}\\
\small \textit{2. Department of Mathematics, Shanghai University,
Shanghai 200444, China }
} \vspace{0.2cm}
\title{Fractional-compact numerical algorithms for Riesz spatial fractional reaction-dispersion equations
\thanks{The work was partially supported by the National
Natural Science Foundation of China under Grant Nos. 11372170, 11561060 and 11671251.
}}\maketitle \vspace{0.2 cm}
 \hrulefill

\begin{abstract}

It is well known that using high-order numerical algorithms to solve fractional differential
 equations leads to almost the same
computational cost with low-order ones but the accuracy (or convergence order) is greatly improved,
due to the nonlocal properties of fractional operators. Therefore, developing some high-order numerical
approximation formulas for fractional derivatives play a more important role in numerically solving
fractional differential equations. This paper focuses on constructing (generalized) high-order
 fractional-compact numerical approximation
 formulas for Riesz derivatives. Then we apply the developed formulas to
the one- and two-dimension Riesz spatial fractional reaction-dispersion equations.
The stability and convergence of the derived numerical
algorithms are strictly studied by using the energy analysis method.
Finally, numerical simulations are given to demonstrate the efficiency and convergence orders
of the presented numerical algorithms.
\\\vspace{0.2 cm}\\
 \textbf{Key words}:
 Riesz derivative, Fractional-compact numerical approximation formulas, generating function,
Riesz spatial fractional reaction dispersion equations.
\vspace{0.2 cm}\\
\end{abstract}
\hrulefill

 \section{Introduction}\label{sec:1}

 Riesz derivative $\displaystyle \frac{\partial^\alpha
u(x)}{\partial|x|^\alpha}$ with derivative order $\alpha\in(1,2)$
 is defined by \cite{Gorenflo,Samko}
\begin{equation}\label{eq1}
\displaystyle \frac{\partial^\alpha
u(x)}{\partial{|x|^\alpha}}=\displaystyle -\frac{1}{2\cos\left(\frac{\pi\alpha}{2}\right)}\left
(\,_{RL}D_{a,x}^\alpha+\,_{RL}D_{x,b}^\alpha\right)u(x),\;1<\alpha<2,\;x\in(a,b),
\end{equation}
where
$\,_{RL}D_{a,x}^\alpha$ denotes the left Riemann-Liouville derivative
$$\displaystyle \,_{RL}{D}_{a,x}^{\alpha}u(x)=\left\{
\begin{array}{lll}
\displaystyle\frac{1}{\Gamma(2-\alpha)}\frac{\textmd{d}^2}{\textmd{d}
x^2}
 \int_{a}^{x}\frac{u(s)\textmd{d}s}{(x-s)^{\alpha-1}},\;\;\;\; 1<\alpha<2,\vspace{0.4 cm}\\
\displaystyle\frac{\textmd{d}^{2} u(x)}{\textmd{d} x^{2}},\;\;\;\;\alpha=2,\vspace{0.2 cm}\\
\end{array}\right.
$$
and $\,_{RL}D_{x,b}^\alpha$
the right Riemann-Liouville derivative
$$\displaystyle \,_{RL}{D}_{x,b}^{\alpha}u(x)=\left\{
\begin{array}{lll}
\displaystyle\frac{1}{\Gamma(2-\alpha)}\frac{\textmd{d}^2}{\textmd{d}
x^2}
 \int_{x}^{b}\frac{u(s)\textmd{d}s}{(s-x)^{\alpha-1}},\;\;\;\; 1<\alpha<2,\vspace{0.4 cm}\\
\displaystyle\frac{\textmd{d}^{2} u(x)}{\textmd{d} x^{2}},\;\;\;\;\alpha=2.\vspace{0.2 cm}\\
\end{array}\right.
$$
The Riesz fractional derivative has been shown to be a suitable tool for modeling L\'{e}vy
 flights whose second moments diverge. But the fractional moments exist. They are often used to
  analyze the diffusion behaviors of particles \cite{Felmer,Longhi,Secchi,Stickler}.
  So the Riesz spatial fractional reaction-dispersion equations have attracted increasing interest.
  However, almost
  all of the fractional differential equations cannot
be obtained analytical solutions, hence, more and more studies focus on their numerical
solutions in recent decades \cite{Alikhanov,Baeumer,Baleanu,Cao,Dimitrov1,Dimitrov2,Gao,Garrappa,Jin,Li,Pagnini,Wang,Yan,Ye}.
 For the numerical algorithms
of such problems, the key step is to construct efficient approximation
 formula for the Riesz (or Riemann-Liouville) derivatives. Up to now, there have existed
  some difference formulas to approximate the Riemann-Liouville derivatives and
 Riesz derivatives. One of the most popular approximation is the first-order (shifted) Gr\"{u}nwald-Letnikov
 formula \cite{Meerschaert,Tadjeran}. Recently, some high-order formulas have been constructed based on
(shifted) Gr\"{u}nwald-Letnikov
 formula, for examples, second-order and third-order weighted and shifted Gr\"{u}nwald difference schemes \cite{Tian,Zhou}.
Some other studies have been also developed to approximate the Riesz derivatives with the help of the fractional centered operator
  \cite{Ortigueira}. \c{C}elik and Duman \cite{Celik} studied the convergence order of this approximation and applied it
   to Riesz spatial fractional
diffusion equations. Later on, Shen et al. applied it to Riesz spatial fractional advection-dispersion
 equation and got a weighted difference scheme \cite{Shen}.
Ding and Li \cite{Ding1,Ding2,Ding3} constructed a series of high-order algorithms for Riemann-Liouville (Riesz)
 derivatives and applied them to the different types of fractional
differential equations. It is worth mentioning that we proposed a kind of higher-order numerical approximate formulas
 for Riemann-Liouville (or Riesz) derivatives
based on the novel generating functions in \cite{Ding4}, and obtained a unconditionally stable difference scheme
 where the 2nd-order approximation formula was applied to Riesz spatial fractional advection-diffusion equations.
 In addition, other methods, such as L1/L2 approximation methods \cite{Yang}, (improved) matrix transform methods \cite{Ilic,Zhang},
  etc., were also adopted to approximate the Riemann-Liouville (or Riesz) derivatives.

The purpose of this paper is
to develop several robust and efficient high-order fractional-compact numerical
approximation formulas for Riesz derivatives.
The remainder of the paper is organized as follows. In Section 2, we derive two kinds of 3rd-order
numerical approximate formulas for the Riesz derivatives.
In Section 3, the generalized high-order numerical approximation formulas and their fractional-compact
forms for Riemann-Liouville (also Riesz) derivatives are also constructed.
 In Sections 4 and 5, one of the 3rd-order schemes is applied
to solving one- and two-dimension Riesz spatial fractional reaction-dispersion equations, respectively.
The stability and convergence
analyses of the presented difference schemes are also studied.
In Section 6, numerical examples are carried out to confirm the theoretical results and show
the efficiency of the proposed schemes. Finally, some conclusions are included in Section 7.

\section{Fractional-compact numerical approximation formulas }\label{sec:2}

In this section, we establish fractional-compact numerical schemes for Riesz derivatives, where
the ideas and techniques are also suitable for Riemann-Liouville derivatives.

 \subsection{The third-order fractional-compact formula I}\label{subsec:1.1}

 Firstly, we define the following difference operators,
$$
\begin{array}{lll}
\displaystyle \,^{L}\mathcal{B}_{2}^{\alpha}u(x)
=\frac{1}{h^{\alpha}}\sum\limits_{\ell=0}^{\infty}
\kappa_{2,\ell}^{(\alpha)}u\left(x-(\ell-1)h\right),
\end{array}
$$
and
$$
\begin{array}{lll}
\displaystyle\,^{R}\mathcal{B}_{2}^{\alpha}u(x)
=\frac{1}{h^{\alpha}}\sum\limits_{\ell=0}^{\infty}
\kappa_{2,\ell}^{(\alpha)}u\left(x+(\ell-1)h\right).
\end{array}
$$
Here,
the coefficients read as \cite{Ding4}
\begin{equation}\label{eq2}
 \displaystyle\kappa_{2,\ell}^{(\alpha)}=
\left(\frac{3\alpha-2}{2\alpha}\right)^{\alpha}\sum\limits_{m=0}^{\ell}
\left(\frac{\alpha-2}{3\alpha-2}\right)^m\varpi_{1,m}^{(\alpha)}
\varpi_{1,\ell-m}^{(\alpha)},\;\;\ell=0,1\ldots
\end{equation}
They can be obtained by the associate generating function
 $$W_{2}(z)=\left(\frac{3\alpha-2}{2\alpha}-\frac{2(\alpha-1)}{\alpha}z+\frac{\alpha-2}{2\alpha}z^2\right)^{\alpha},$$
i.e.,
$$
\begin{array}{lll}
\displaystyle\left(\frac{3\alpha-2}{2\alpha}-\frac{2(\alpha-1)}{\alpha}z+\frac{\alpha-2}{2\alpha}z^2\right)^{\alpha}=
\sum\limits_{\ell=0}^{\infty}\kappa_{2,\ell}^{(\alpha)}z^\ell,\;|z|<1.
\end{array}
$$

In particular, the expressions $\varpi_{1,m}^{(\alpha)}$ in (\ref{eq2}) are the coefficients of the power series expansion
 of function $(1-z)^{\alpha}$ for $|z|<1$. They can be computed recursively
$$
\begin{array}{lll}
\displaystyle \varpi_{1,0}^{(\alpha)}=1,\;\varpi_{1,m}^{(\alpha)}
=\left(1-\frac{1+\alpha}{m}\right)\varpi_{1,m-1}^{(\alpha)},\;\;m=1,2,\ldots
\end{array}
$$

Coefficients $\kappa_{2,\ell}^{(\alpha)}\;(\ell=0,1,\ldots)$ can be calculated by the following recursion formulas
$$\left\{
\begin{array}{lll}
 \displaystyle\kappa_{2,0}^{(\alpha)}=
\left(\frac{3\alpha-2}{2\alpha}\right)^{\alpha}, \vspace{0.2 cm}\\
\displaystyle\kappa_{2,1}^{(\alpha)}=
\frac{4\alpha(1-\alpha)}{3\alpha-2}\kappa_{2,0}^{(\alpha)}, \vspace{0.2 cm}\\
\displaystyle\kappa_{2,\ell}^{(\alpha)}=
\frac{4\alpha(1-\alpha)(\alpha-\ell+1)}{(3\alpha-2)\ell}\kappa_{2,\ell-1}^{(\alpha)}
+\frac{(\alpha-2)(2\alpha-\ell+2)}{(3\alpha-2)\ell}\kappa_{2,\ell-2}^{(\alpha)},\;\;\ell\geq2.
\end{array}\right.
$$

And these coefficients $\kappa_{2,\ell}^{(\alpha)}\;(\ell=0,1,\ldots)$ have some important and interesting properties are listed as follows.

\begin{theorem}\cite{Ding4}\label{Th2.1} The coefficients $\kappa_{2,\ell}^{(\alpha)}\;
(\ell=0,1,\ldots)$ have the following properties for $1<\alpha<2$,\vspace{0.2 cm}\\
\verb"(i)"~$\displaystyle\kappa_{2,0}^{(\alpha)}=\left(\frac{3\alpha-2}{2\alpha}\right)^{\alpha}>0$,\;
 $\displaystyle\kappa_{2,1}^{(\alpha)}=\frac{4\alpha(1-\alpha)}{3\alpha-2}\kappa_{2,0}^{(\alpha)}<0$;\vspace{0.2 cm}\\
\verb"(ii)"~$\displaystyle\kappa_{2,2}^{(\alpha)}=\frac{\alpha(8\alpha^3-21\alpha^2+16\alpha-4)}{(3\alpha-2)^2}
\kappa_{2,0}^{(\alpha)}$.\;
$\kappa_{2,2}^{(\alpha)}<0$ if $\alpha\in(1,\alpha_1^{\ast})$, while $\kappa_{2,2}^{(\alpha)}\geq0$ if $\alpha\in[\alpha_1^{\ast},2)$, where
$\displaystyle\alpha_1^{\ast}=\frac{7}{8}+\frac{\sqrt[3]{621+48\sqrt{87}}}{24}+\frac{19}{\sqrt[3]{621+48\sqrt{87}}}\approx1.5333$;\vspace{0.2 cm}\\
\verb"(iii)"~ $\displaystyle\kappa_{2,\ell}^{(\alpha)}\geq0$ if $\ell\geq3$;\vspace{0.2 cm}\\
\verb"(iv)"~ $\displaystyle\kappa_{2,\ell}^{(\alpha)}\sim-\frac{\sin\left(\pi\alpha\right)\Gamma(\alpha+1)}{\pi}\ell^{-\alpha-1} $  as $\ell\rightarrow\infty$;\vspace{0.2 cm}\\
\verb"(v) "~$\displaystyle\kappa_{2,\ell}^{(\alpha)}\rightarrow 0 $ as $\ell\rightarrow\infty$;\vspace{0.2 cm}\\
\verb"(vi) "~$\displaystyle\sum\limits_{\ell=0}^{\infty}\kappa_{2,\ell}^{(\alpha)}=0.$
\end{theorem}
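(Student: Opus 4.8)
The whole theorem is driven by one algebraic observation about the associated generating function, and the plan is to extract all six items from it. Writing the inner quadratic of $W_{2}(z)$ over the common denominator $2\alpha$ and testing $z=1$ shows that $z=1$ is a root; factoring it out gives
$$W_{2}(z)=\kappa_{2,0}^{(\alpha)}\,(1-z)^{\alpha}\,(1-cz)^{\alpha},\qquad c=\frac{\alpha-2}{3\alpha-2},$$
where $\kappa_{2,0}^{(\alpha)}=\left(\frac{3\alpha-2}{2\alpha}\right)^{\alpha}$ and the second root sits at $z_{0}=1/c=\frac{3\alpha-2}{\alpha-2}$. For $1<\alpha<2$ one checks at once that $c\in(-1,0)$ and hence $|z_{0}|>1$, so $z=1$ is the unique dominant singularity of $W_{2}$, and the Cauchy product of this factorization is exactly the convolution formula (\ref{eq2}) with $\varpi_{1,m}^{(\alpha)}$ the coefficients of $(1-z)^{\alpha}$.

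Items (i) and (ii) are then pure computation. For (i) I would read off the closed forms of $\kappa_{2,0}^{(\alpha)}$ and $\kappa_{2,1}^{(\alpha)}$ and inspect the signs of $3\alpha-2>0$ and $1-\alpha<0$. For (ii), convolving the first three coefficients of $(1-z)^{\alpha}$ and $(1-cz)^{\alpha}$ (equivalently, two steps of the stated recursion) produces the factor $p(\alpha)=8\alpha^{3}-21\alpha^{2}+16\alpha-4$. Since $p(1)=-1<0$, $p(2)=8>0$, and the derivative $p'$ shows $p$ first decreases then increases on $(1,2)$, there is exactly one root $\alpha_{1}^{\ast}$ in this interval; its closed form is the quoted Cardano expression, and this fixes the sign of $\kappa_{2,2}^{(\alpha)}$ on each side of $\alpha_{1}^{\ast}$.

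Items (iv)--(vi) are analytic corollaries of the factorization. Near $z=1$ the analytic factor tends to $(1-c)^{\alpha}$, and since $\kappa_{2,0}^{(\alpha)}(1-c)^{\alpha}=\left(\frac{3\alpha-2}{2\alpha}\right)^{\alpha}\left(\frac{2\alpha}{3\alpha-2}\right)^{\alpha}=1$ we get $W_{2}(z)\sim(1-z)^{\alpha}$. The transfer theorem of singularity analysis (legitimate because $z_{0}$ lies farther from the origin) then yields $\kappa_{2,\ell}^{(\alpha)}\sim[z^{\ell}](1-z)^{\alpha}\sim \ell^{-\alpha-1}/\Gamma(-\alpha)$, and the reflection formula $\Gamma(-\alpha)\Gamma(1+\alpha)=-\pi/\sin(\pi\alpha)$ rewrites $1/\Gamma(-\alpha)$ as $-\sin(\pi\alpha)\Gamma(\alpha+1)/\pi$, which is (iv); note this constant is positive for $1<\alpha<2$. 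Item (v) is the immediate consequence $\ell^{-\alpha-1}\to0$. For (vi), the decay $\kappa_{2,\ell}^{(\alpha)}=O(\ell^{-\alpha-1})$ with $\alpha+1>2$ makes $\sum_{\ell}\kappa_{2,\ell}^{(\alpha)}$ absolutely convergent, so Abel's theorem identifies the sum with $\lim_{z\to1^{-}}W_{2}(z)=W_{2}(1)=\kappa_{2,0}^{(\alpha)}(1-1)^{\alpha}(1-c)^{\alpha}=0$.

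The real work is (iii), the nonnegativity of $\kappa_{2,\ell}^{(\alpha)}$ for $\ell\ge3$, and this is where I expect the main obstacle. Since $\kappa_{2,0}^{(\alpha)}>0$ it is equivalent to showing that the Taylor coefficients of $(1-z)^{\alpha}(1-cz)^{\alpha}$ are nonnegative from index $3$ on. The coefficients $a_{m}=(-1)^{m}\binom{\alpha}{m}$ of $(1-z)^{\alpha}$ satisfy $a_{0}>0$, $a_{1}<0$, and $a_{m}>0$ for $m\ge2$, while the second factor has $c<0$, so its coefficients $c^{m}a_{m}$ alternate in sign; consequently the convolution $\sum_{m}c^{m}\varpi_{1,m}^{(\alpha)}\varpi_{1,\ell-m}^{(\alpha)}$ carries genuine cancellations and is not positive termwise. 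I would try two routes: (a) an induction on the stated three-term recursion, noting that for $\ell\ge3$ the coefficient multiplying $\kappa_{2,\ell-1}^{(\alpha)}$ is positive while the coefficient multiplying $\kappa_{2,\ell-2}^{(\alpha)}$ changes sign with $\ell$, which forces a strengthened hypothesis controlling consecutive terms simultaneously; or (b) a direct estimate of the convolution that groups the finitely many bad-sign contributions (those involving $a_{1}<0$ or odd powers of $c$) and dominates them using the monotonicity and decay of the $a_{m}$. The delicate point, and the step I expect to be hardest, is making such an estimate \emph{uniform} in $\ell$ so as to cover the whole infinite range $\ell\ge3$, rather than only the asymptotic regime already settled by (iv).
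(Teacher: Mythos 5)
Your factorization $W_{2}(z)=\kappa_{2,0}^{(\alpha)}(1-z)^{\alpha}(1-cz)^{\alpha}$ with $c=\frac{\alpha-2}{3\alpha-2}\in(-1,0)$ is correct, and it is precisely what underlies the paper's convolution formula (\ref{eq2}); on that basis your handling of (i), (ii), (iv), (v) and (vi) is sound. The count of sign changes of $8\alpha^{3}-21\alpha^{2}+16\alpha-4$ on $(1,2)$ is right, the normalization $\kappa_{2,0}^{(\alpha)}(1-c)^{\alpha}=1$ is right, and the singularity-analysis/Abel route to (iv)--(vi) is legitimate because the factor $(1-cz)^{\alpha}$ is analytic in $|z|<1/|c|$ with $1/|c|>1$. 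Two contextual remarks: this paper never proves Theorem \ref{Th2.1} at all (it is quoted from \cite{Ding4}), so the only in-paper comparator for your argument is the proof given for the companion Theorem \ref{Th2.5}; and carrying out Cardano as you propose actually yields $\alpha_{1}^{\ast}=\frac{7}{8}+\frac{\sqrt[3]{621+48\sqrt{87}}}{24}+\frac{19}{8\sqrt[3]{621+48\sqrt{87}}}$, i.e.\ the printed closed form is missing a factor $8$ in the last denominator (the numerical value $1.5333$ is correct), so a discrepancy there would be the paper's typo, not your error.

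The genuine gap is (iii): you correctly identify it as the hard item, but you only propose two routes and execute neither. Both can be completed. Your route (b) is exactly what the paper does for the analogous Theorem \ref{Th2.5}(iii): isolate the finitely many possibly-negative convolution terms, compare them against $\varpi_{1,\ell}^{(\alpha)}$, and reduce uniform-in-$\ell$ positivity to positivity of an explicit bivariate polynomial $P(\alpha,x)$ on $(1,2)\times[\ell_{0},\infty)$, proved by showing $\partial_{x}^{2}P>0$, $\partial_{x}P|_{x=\ell_{0}}>0$ and $P(\alpha,\ell_{0})>0$, with the remaining small $\ell$ checked directly; this is how the uniformity obstacle you flag gets resolved. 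Your route (a) is in fact easier than you fear: in the three-term recursion the coefficient of $\kappa_{2,\ell-1}^{(\alpha)}$, namely $\frac{4\alpha(1-\alpha)(\alpha-\ell+1)}{(3\alpha-2)\ell}$, is positive for all $\ell\geq3$, while the coefficient of $\kappa_{2,\ell-2}^{(\alpha)}$, namely $\frac{(\alpha-2)(2\alpha-\ell+2)}{(3\alpha-2)\ell}$, changes sign only once, at $\ell=2\alpha+2<6$; hence for every $\ell\geq6$ both coefficients are positive, and once $\kappa_{2,3}^{(\alpha)},\kappa_{2,4}^{(\alpha)},\kappa_{2,5}^{(\alpha)}\geq0$ are verified by direct polynomial sign analysis in $\alpha$, plain induction gives (iii) for all $\ell\geq6$ with no strengthened hypothesis at all. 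As submitted, your proposal establishes five of the six items and only conjectures a strategy for the sixth, which is the one carrying the real content.
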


Next, we give the following asymptotic expansion formulas for difference operators
$\,^{L}\mathcal{B}_{2}^{\alpha}$ and $\,^{R}\mathcal{B}_{2}^{\alpha}$, which play an important role
in establishment of high-order algorithms for Riemann-Liouville
derivatives.

\begin{theorem}\cite{Ding4}\label{Th2.2} Let $u(x)\in C^{[\alpha]+n+1}(\mathds{R})$ and all the derivatives of $u(x)$ up to
order $[\alpha]+n+2$ belong to
$L_1(\mathds{R})$.
Then one has
\begin{equation}\label{eq3}
\displaystyle\,^{L}\mathcal{B}_{2}^{\alpha}u(x)
 =\,_{RL}D_{-\infty,x}^{\alpha}u(x)+\sum\limits_{\ell=1}^{n-1}\left(\sigma_{\ell}^{(\alpha)}\,_{RL}D_{-\infty,x}^{\alpha+\ell}u(x)\right)h^{\ell}
+\mathcal{O}(h^n),\;n\geq2,
\end{equation}
and
$$
\begin{array}{lll}
\displaystyle\,^{R}\mathcal{B}_{2}^{\alpha}u(x)
 =\,_{RL}D_{x,+\infty}^{\alpha}u(x)+\sum\limits_{\ell=1}^{n-1}\left(\sigma_{\ell}^{(\alpha)}\,_{RL}D_{x,+\infty}^{\alpha+\ell}u(x)\right)h^{\ell}
+\mathcal{O}(h^n),\;n\geq2,
\end{array}
$$
 hold uniformly on $\mathds{R}$. Here coefficients $\sigma_{\ell}^{(\alpha)}\;(\ell=1,2,\ldots)$ satisfy the equation
$\displaystyle
\frac{\mathrm{e}^{z}}{z^\alpha}W_{2}(\mathrm{e}^{-z})=1+\sum\limits_{\ell=1}^{\infty}\sigma_{\ell}^{(\alpha)}z^\ell
$. Especially, the first three coefficients are
$$
\begin{array}{lll}
\displaystyle \sigma_{1}^{(\alpha)}=0, \;\;\sigma_{2}^{(\alpha)}=
-\frac{2\alpha^2-6\alpha+3}{6\alpha},\;\;
\sigma_{3}^{(\alpha)}=\frac{3\alpha^3-11\alpha^2+12\alpha-4}{12\alpha^2}.
\end{array}
$$
\end{theorem}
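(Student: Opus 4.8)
The plan is to work on the Fourier side, where the convolution operator $\,^{L}\mathcal{B}_{2}^{\alpha}$ acts as multiplication by a symbol and the whole-line left Riemann--Liouville derivative $\,_{RL}D_{-\infty,x}^{\beta}$ is diagonalized. First I would compute the symbol: writing $\widehat{u}(\omega)=\int_{\mathds{R}}u(x)\mathrm{e}^{-\mathrm{i}\omega x}\,\mathrm{d}x$, the shifted term $u(x-(\ell-1)h)$ transforms to $\mathrm{e}^{-\mathrm{i}(\ell-1)\omega h}\widehat{u}(\omega)$, so summing against $\kappa_{2,\ell}^{(\alpha)}$ and invoking the generating-function identity $\sum_{\ell=0}^{\infty}\kappa_{2,\ell}^{(\alpha)}z^{\ell}=W_{2}(z)$ at $z=\mathrm{e}^{-\mathrm{i}\omega h}$ gives
$$
\widehat{\,^{L}\mathcal{B}_{2}^{\alpha}u}(\omega)=\frac{\mathrm{e}^{\mathrm{i}\omega h}}{h^{\alpha}}\,W_{2}(\mathrm{e}^{-\mathrm{i}\omega h})\,\widehat{u}(\omega).
$$
The term-by-term transformation is justified because, by Theorem~\ref{Th2.1}(iv), $\kappa_{2,\ell}^{(\alpha)}=\mathcal{O}(\ell^{-\alpha-1})$ is absolutely summable, so the defining series converges absolutely in $L_{1}(\mathds{R})$ and Fubini's theorem applies.

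Next I would feed in the relation defining the coefficients. Setting $z=\mathrm{i}\omega h$ in $\mathrm{e}^{z}z^{-\alpha}W_{2}(\mathrm{e}^{-z})=1+\sum_{\ell\geq1}\sigma_{\ell}^{(\alpha)}z^{\ell}$ and cancelling $h^{\alpha}$ against $(\mathrm{i}\omega h)^{\alpha}$ recasts the symbol as
$$
\widehat{\,^{L}\mathcal{B}_{2}^{\alpha}u}(\omega)=(\mathrm{i}\omega)^{\alpha}\Bigl(1+\sum_{\ell\geq1}\sigma_{\ell}^{(\alpha)}(\mathrm{i}\omega)^{\ell}h^{\ell}\Bigr)\widehat{u}(\omega).
$$
Because $(\mathrm{i}\omega)^{\alpha+\ell}\widehat{u}(\omega)$ is exactly the symbol of $\,_{RL}D_{-\infty,x}^{\alpha+\ell}u(x)$, truncating the bracket at order $n-1$ and inverting the transform formally reproduces the asserted expansion; the whole problem then collapses to controlling the tail.

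The crux, and the step I expect to be the main obstacle, is the uniform $\mathcal{O}(h^{n})$ remainder bound. I would introduce the error symbol
$$
E(\omega,h)=\frac{\mathrm{e}^{\mathrm{i}\omega h}}{h^{\alpha}}W_{2}(\mathrm{e}^{-\mathrm{i}\omega h})-(\mathrm{i}\omega)^{\alpha}\Bigl(1+\sum_{\ell=1}^{n-1}\sigma_{\ell}^{(\alpha)}(\mathrm{i}\omega)^{\ell}h^{\ell}\Bigr),
$$
so that the physical-space error is $\frac{1}{2\pi}\int_{\mathds{R}}\mathrm{e}^{\mathrm{i}\omega x}E(\omega,h)\widehat{u}(\omega)\,\mathrm{d}\omega$ and it suffices to bound $\int_{\mathds{R}}|E(\omega,h)|\,|\widehat{u}(\omega)|\,\mathrm{d}\omega$ by $Ch^{n}$ uniformly in $x$. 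Fixing a radius $\rho>0$ inside the disk of analyticity of $g(z)=\mathrm{e}^{z}z^{-\alpha}W_{2}(\mathrm{e}^{-z})$ — here $g$ is analytic near $0$ with $g(0)=1$, since the quadratic base of $W_{2}$ has a simple zero at $\mathrm{e}^{-z}=1$, so that $W_{2}(\mathrm{e}^{-z})\sim z^{\alpha}$ as $z\to0$ and the factor $z^{-\alpha}$ cancels the singularity — I would split the axis at $|\omega|=\rho/h$. On the low-frequency part $|\omega|\leq\rho/h$ the Taylor remainder of $g$ beyond order $n-1$ is $\mathcal{O}(|z|^{n})$, whence $|E(\omega,h)|\leq C|\omega|^{\alpha}|\omega h|^{n}=Ch^{n}|\omega|^{\alpha+n}$; on the high-frequency part $|\omega|>\rho/h$ I would bound each term of $E$ crudely, using $|W_{2}(\mathrm{e}^{-\mathrm{i}\omega h})|\leq M$ and $h^{-1}\leq\rho^{-1}|\omega|$ to again extract a factor $h^{n}|\omega|^{\alpha+n}$. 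In both regimes the loss is absorbed by the decay $|\widehat{u}(\omega)|\leq C(1+|\omega|)^{-[\alpha]-n-2}$, which follows from the assumption that the derivatives of $u$ up to order $[\alpha]+n+2$ belong to $L_{1}(\mathds{R})$; since $1<\alpha<2$ makes $\alpha+n-([\alpha]+n+2)<-1$, the weight $|\omega|^{\alpha+n}|\widehat{u}(\omega)|$ is integrable and the desired bound follows.

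Finally, I would dispatch the right operator $\,^{R}\mathcal{B}_{2}^{\alpha}$ by the reflection $x\mapsto-x$: with $\tilde{u}(x)=u(-x)$ one has $\,^{R}\mathcal{B}_{2}^{\alpha}u(x)=\bigl(\,^{L}\mathcal{B}_{2}^{\alpha}\tilde{u}\bigr)(-x)$ together with $\,_{RL}D_{-\infty,y}^{\beta}\tilde{u}(y)\big|_{y=-x}=\,_{RL}D_{x,+\infty}^{\beta}u(x)$, so the already established left expansion transfers verbatim with the same real coefficients $\sigma_{\ell}^{(\alpha)}$. The explicit values $\sigma_{1}^{(\alpha)}=0$, $\sigma_{2}^{(\alpha)}$ and $\sigma_{3}^{(\alpha)}$ would then be read off by expanding $g(z)$ through order $z^{3}$, a routine computation.
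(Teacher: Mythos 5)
Your proposal is correct, and it takes essentially the same route as the proof the paper relies on: Theorem \ref{Th2.2} is stated here without proof by citation to \cite{Ding4}, where the argument is exactly this Fourier-symbol one (generating function evaluated at $\mathrm{e}^{-\mathrm{i}\omega h}$, analyticity and Taylor expansion of $\mathrm{e}^{z}z^{-\alpha}W_{2}(\mathrm{e}^{-z})$ near $z=0$, a low/high-frequency splitting at $|\omega|=\rho/h$, and the decay $|\widehat{u}(\omega)|\leq C(1+|\omega|)^{-[\alpha]-n-2}$ coming from the $L_{1}$ assumption on the derivatives). Your reduction of the right-sided expansion to the left-sided one by reflection is also the standard step and is carried out correctly.
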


Define difference operator $\mathcal{L}$ as
$$
\begin{array}{lll}
\mathcal{L}u(x)=\left(1+\sigma_{2}^{(\alpha)}h^2\delta_x^2\right)u(x),
\end{array}
$$
where $\delta_x^2$ is the second-order central difference operator and is
defined by $\delta_x^2u(x)=\frac{u(x+h)-2u(x)+u(x-h)}{h^2}$. Such $\mathcal{L}$ can be
regarded as a fractional-compact operator by borrowing the appellation of the integer-order case.
Accordingly, the main results are enunciated as follows.

\begin{theorem}\label{Th2.3} Let $u(x)\in C^{[\alpha]+4}(\mathds{R})$ and all the derivatives of $u(x)$
up to order $[\alpha]+5$ belong to
$L_1(\mathds{R})$.
Then there hold
$$
\begin{array}{lll}
\displaystyle\,^{L}\mathcal{B}_{2}^{\alpha}u(x)
 =\mathcal{L}\,_{RL}D_{-\infty,x}^{\alpha}u(x)
+\mathcal{O}(h^3),
\end{array}
$$
and
\begin{equation}\label{eq4}
\displaystyle\,^{R}\mathcal{B}_{2}^{\alpha}u(x)
 =\mathcal{L}\,_{RL}D_{x,+\infty}^{\alpha}u(x)
+\mathcal{O}(h^3),
\end{equation}
uniformly for $x\in\mathds{R}$.
\end{theorem}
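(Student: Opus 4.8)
The plan is to read the fractional-compact identity straight off the asymptotic expansion in Theorem \ref{Th2.2}, keeping only as many terms as the target order $\mathcal{O}(h^3)$ demands. First I would apply Theorem \ref{Th2.2} with $n=3$; the hypothesis $u\in C^{[\alpha]+4}(\mathds{R})$ with all derivatives up to order $[\alpha]+5$ in $L_1(\mathds{R})$ is precisely the case $n=3$ of that theorem, so it yields
$$
\,^{L}\mathcal{B}_{2}^{\alpha}u(x)=\,_{RL}D_{-\infty,x}^{\alpha}u(x)+\sigma_{1}^{(\alpha)}h\,_{RL}D_{-\infty,x}^{\alpha+1}u(x)+\sigma_{2}^{(\alpha)}h^2\,_{RL}D_{-\infty,x}^{\alpha+2}u(x)+\mathcal{O}(h^3).
$$
Since $\sigma_{1}^{(\alpha)}=0$ the first-order term vanishes, and only the single $h^2$ correction remains to be matched against $\mathcal{L}$.

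Next I would convert that correction into an application of $\mathcal{L}$ to the fractional derivative itself. Writing $w(x)=\,_{RL}D_{-\infty,x}^{\alpha}u(x)$, the index law for the whole-line Riemann--Liouville operator gives $\,_{RL}D_{-\infty,x}^{\alpha+2}u(x)=w''(x)$, while a symmetric Taylor expansion of the central difference yields $\delta_x^2 w(x)=w''(x)+\mathcal{O}(h)$. Combining these,
$$
\mathcal{L}\,_{RL}D_{-\infty,x}^{\alpha}u(x)=w(x)+\sigma_{2}^{(\alpha)}h^2\delta_x^2 w(x)=\,_{RL}D_{-\infty,x}^{\alpha}u(x)+\sigma_{2}^{(\alpha)}h^2\,_{RL}D_{-\infty,x}^{\alpha+2}u(x)+\mathcal{O}(h^3),
$$
because the discrepancy $\sigma_{2}^{(\alpha)}h^2(\delta_x^2 w-w'')$ is already of size $\mathcal{O}(h^3)$. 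Subtracting this from the expansion of the previous paragraph, the $\,_{RL}D^{\alpha}$ and $h^2\,_{RL}D^{\alpha+2}$ terms cancel identically and the two residuals combine to $\mathcal{O}(h^3)$, which is exactly the asserted estimate. The right-sided identity (\ref{eq4}) then follows by the identical argument applied to the second expansion of Theorem \ref{Th2.2}, the two sides sharing the same coefficients $\sigma_{\ell}^{(\alpha)}$ because the reflection $x\mapsto-x$ interchanges the two Riemann--Liouville families.

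I expect the delicate point to be bookkeeping of regularity rather than any hard estimate, since the algebraic cancellation is forced by the coincidence between the discrete term $\sigma_{2}^{(\alpha)}h^2\delta_x^2$ and the continuous $h^2$ term isolated in Theorem \ref{Th2.2}. Specifically, I must verify that $w=\,_{RL}D_{-\infty,x}^{\alpha}u$ is smooth enough (at least $C^3$ with a uniformly bounded third derivative) for the central-difference estimate $\delta_x^2 w=w''+\mathcal{O}(h)$ to hold uniformly in $x$; because $w^{(k)}=\,_{RL}D_{-\infty,x}^{\alpha+k}u$ and $\alpha\in(1,2)$, the stated hypotheses on $u$ are exactly calibrated to supply this, being simultaneously the conditions needed for Theorem \ref{Th2.2} at $n=3$. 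The accompanying index law $\,_{RL}D_{-\infty,x}^{\alpha+2}=\tfrac{\mathrm{d}^2}{\mathrm{d}x^2}\,_{RL}D_{-\infty,x}^{\alpha}$ is immediate on $\mathds{R}$ from the Fourier multiplier $(\mathrm{i}\xi)^{\alpha}$ of the left Riemann--Liouville derivative, so confirming these two facts under the given smoothness is the only real work.
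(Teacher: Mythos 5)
Your proposal is correct and follows essentially the same route as the paper's own proof: take $n=3$ in the expansion of Theorem \ref{Th2.2}, use $\sigma_1^{(\alpha)}=0$ and the index law $\,_{RL}D_{-\infty,x}^{\alpha+2}=\frac{\mathrm{d}^2}{\mathrm{d}x^2}\,_{RL}D_{-\infty,x}^{\alpha}$, replace the second derivative by $\delta_x^2$ at a cost the prefactor $\sigma_2^{(\alpha)}h^2$ absorbs into $\mathcal{O}(h^3)$, and treat the right-sided case symmetrically. Your extra care with the regularity bookkeeping (that $C^3$-boundedness of $w$ suffices for $\delta_x^2 w = w''+\mathcal{O}(h)$) only sharpens what the paper states implicitly.
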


\begin{proof}
In equation (\ref{eq3}), taking $n=3$ and noticing
$$
\begin{array}{lll}
\displaystyle\,_{RL}D_{-\infty,x}^{\alpha+2}u(x)
=\frac{\textrm{d}^2}{\textrm{d}x^2}\left(\,_{RL}D_{-\infty,x}^{\alpha}u(x)\right),
\end{array}
$$
then one has
$$
\begin{array}{lll}
\displaystyle\,^{L}\mathcal{B}_{2}^{\alpha}u(x)
 &=&\displaystyle\,_{RL}D_{-\infty,x}^{\alpha}u(x)+\frac{\textrm{d}^2}{\textrm{d}x^2}
 \left(\,_{RL}D_{-\infty,x}^{\alpha}u(x)\right)\sigma_{2}^{(\alpha)}h^{2}
+\mathcal{O}(h^3)\vspace{0.2 cm}\\
&=&\displaystyle
\,_{RL}D_{-\infty,x}^{\alpha}u(x)+\sigma_{2}^{(\alpha)}h^{2}\left(\delta_x^2\,_{RL}D_{-\infty,x}^{\alpha}u(x)+\mathcal{O}(h^2)\right)
+\mathcal{O}(h^3)\vspace{0.2 cm}\\
&=&\displaystyle\mathcal{L}
\,_{RL}D_{-\infty,x}^{\alpha}u(x)
+\mathcal{O}(h^3).
\end{array}
$$

Using the same method, we can prove that equation (\ref{eq4}) holds too. All this completes proof.
\end{proof}

In particular, if function $u(x)$ is defined on a bounded interval $(a, b)$ and satisfies $u(a)=u(b)=0$,
then we can apply zero-extension to $u(x)$ such that it is defined on $\mathds{R}$. Now
we further have the following results.

\begin{theorem}\label{Th2.4} Suppose $u(x)\in C^{[\alpha]+4}((a,b))$, $u(a)=u(b)=0$ and all the derivatives of $u(x)$
up to order $[\alpha]+5$ belong to
$L_1((a,b))$.
Then for any $x\in(a,b)$, one has
\begin{equation}\label{eq5}
\displaystyle\,^{L}\mathcal{A}_{2}^{\alpha}u(x)
 =\mathcal{L}\,_{RL}D_{a,x}^{\alpha}u(x)
+\mathcal{O}(h^3),
\end{equation}
and
\begin{equation}\label{eq6}
\displaystyle\,^{R}\mathcal{A}_{2}^{\alpha}u(x)
 =\mathcal{L}\,_{RL}D_{x,b}^{\alpha}u(x)
+\mathcal{O}(h^3).
\end{equation}
Here, operators $\,^{L}\mathcal{A}_{2}^{\alpha}$ and $\,^{R}\mathcal{A}_{2}^{\alpha}$ are respectively defined as follows,
$$
\begin{array}{lll}
\displaystyle
\,^{L}\mathcal{A}_{2}^{\alpha}u(x)
=\frac{1}{h^{\alpha}}\sum\limits_{\ell=0}^{[\frac{x-a}{h}]+1}
\kappa_{2,\ell}^{(\alpha)}u\left(x-(\ell-1)h\right),
\end{array}
$$
and
$$
\begin{array}{lll}
\displaystyle
\,^{R}\mathcal{A}_{2}^{\alpha}u(x)
=\frac{1}{h^{\alpha}}\sum\limits_{\ell=0}^{[\frac{b-x}{h}]+1}
\kappa_{2,\ell}^{(\alpha)}u\left(x+(\ell-1)h\right).
\end{array}
$$
\end{theorem}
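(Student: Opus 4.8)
The plan is to reduce Theorem~\ref{Th2.4} to the whole-line result of Theorem~\ref{Th2.3} by zero-extension. First I would introduce the extended function $\tilde{u}$ on $\mathds{R}$, defined by $\tilde{u}(x)=u(x)$ for $x\in(a,b)$ and $\tilde{u}(x)=0$ otherwise. Because $u(a)=u(b)=0$, this extension is continuous across the endpoints, and together with the assumed regularity of $u$ it is set up to meet the hypotheses of Theorem~\ref{Th2.3} (taken with $n=3$); the whole-line expansion then applies to $\tilde{u}$, giving $\,^{L}\mathcal{B}_{2}^{\alpha}\tilde{u}(x)=\mathcal{L}\,_{RL}D_{-\infty,x}^{\alpha}\tilde{u}(x)+\mathcal{O}(h^3)$ uniformly on $\mathds{R}$, and likewise for the right operator.

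The second step is to identify each object in this whole-line identity with its bounded-interval counterpart when $x\in(a,b)$. On the analytic side, since $\tilde{u}$ vanishes on $(-\infty,a]$, the defining integral of $\,_{RL}D_{-\infty,x}^{\alpha}$ runs effectively only over $(a,x)$, so that $\,_{RL}D_{-\infty,x}^{\alpha}\tilde{u}(x)=\,_{RL}D_{a,x}^{\alpha}u(x)$; applying the compact operator $\mathcal{L}$ to both sides preserves this equality. On the discrete side, I would observe that the summand $\kappa_{2,\ell}^{(\alpha)}\tilde{u}\bigl(x-(\ell-1)h\bigr)$ is nonzero only when its argument lies in $(a,b)$, and $x-(\ell-1)h>a$ forces $\ell\leq[\frac{x-a}{h}]+1$. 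Hence the infinite sum in $\,^{L}\mathcal{B}_{2}^{\alpha}\tilde{u}(x)$ collapses exactly to the finite sum defining $\,^{L}\mathcal{A}_{2}^{\alpha}u(x)$, and combining the two identifications yields \eqref{eq5}. The formula \eqref{eq6} follows by the mirror-image argument: $\tilde{u}$ vanishing on $[b,+\infty)$ reduces $\,_{RL}D_{x,+\infty}^{\alpha}$ to $\,_{RL}D_{x,b}^{\alpha}$, and the truncation index becomes $[\frac{b-x}{h}]+1$.

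The step I expect to be most delicate is justifying that the zero-extension $\tilde{u}$ really satisfies the smoothness hypotheses of Theorem~\ref{Th2.3}: a bare zero-extension of a function whose derivatives need not vanish at $a$ and $b$ is not globally $C^{[\alpha]+4}$, so the regularity needed to make the $\mathcal{O}(h^3)$ bound hold uniformly must be read as applying to the extended function and implicitly leans on the endpoint conditions. I would therefore state carefully that the error term is understood in the uniform sense on $(a,b)$ inherited from Theorem~\ref{Th2.3} applied to $\tilde{u}$, and, if full rigor is demanded, replace the bare extension by one matching the boundary derivatives, which does not alter the operator values at points of $(a,b)$ to the order considered. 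The remaining bookkeeping---pinning down the exact truncation index and checking that the shifted term at $\ell=0$, whose argument $x+h$ may exceed $b$, is correctly accounted for by $\tilde{u}(x+h)=0$---is routine.
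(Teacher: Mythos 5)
Your proposal is correct and follows exactly the route the paper takes: the paper gives no explicit proof of Theorem~\ref{Th2.4} at all, only the preceding remark that one applies zero-extension to $u$ and invokes Theorem~\ref{Th2.3}, which is precisely your argument with the details filled in (collapse of the infinite sum $\,^{L}\mathcal{B}_{2}^{\alpha}\tilde{u}$ to the finite sum $\,^{L}\mathcal{A}_{2}^{\alpha}u$ via the truncation index $[\frac{x-a}{h}]+1$, and identification of $\,_{RL}D_{-\infty,x}^{\alpha}\tilde{u}$ with $\,_{RL}D_{a,x}^{\alpha}u$). Your closing caveat --- that a bare zero-extension need not be globally $C^{[\alpha]+4}$ when only $u(a)=u(b)=0$ is assumed, so the hypotheses of Theorem~\ref{Th2.3} are not literally met --- identifies a genuine gap that the paper itself silently passes over, so flagging it and proposing a boundary-matched extension is a strength of your write-up rather than a defect.
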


Finally, combing equations (\ref{eq1}), (\ref{eq5}) with (\ref{eq6}) gives
 a 3rd-order fractional-compact numerical approximation formula for Riesz derivatives,
\begin{equation}\label{eq7}
\displaystyle \mathcal{L}\frac{\partial^\alpha
u(x)}{\partial{|x|^\alpha}}= -\frac{1}{2\cos\left(\frac{\pi\alpha}{2}\right)}\left(
\,^{L}\mathcal{A}_{2}^{\alpha}u(x)+\,^{R}\mathcal{A}_{2}^{\alpha}u(x)\right)+\mathcal {O}(h^3),\;1<\alpha<2.
\end{equation}

{\it{\bf Remark 1:} When $\alpha = 2$, we easily know that $\sigma_{2}^{(\alpha)}=\frac{1}{12}$ and $\sigma_{3}^{(\alpha)}=0$,
then (\ref{eq7}) becomes the following classical fourth-order
compact formula for the second order derivative $\frac{\textmd{d}^2
u(x)}{\textrm{d}x^2}$, that is
$$
\begin{array}{lll}
\displaystyle
\frac{\textrm{d}^2
u(x)}{\textrm{d}x^2}
=\left(1+\frac{h^2}{12}\delta_x^2\right)^{-1}\delta_x^2u(x)+\mathcal {O}(h^4).
\end{array}
$$}

 \subsection{The third-order fractional-compact formula II}\label{subsec:1.1}
 In this subsection, we continue to develop another numerical approximate formula for Riesz derivatives.
If we choose a new generating function
 $$\widetilde{W}_{2}(z)=\left(\frac{3\alpha+2}{2\alpha}-\frac{2(\alpha+1)}{\alpha}z+\frac{\alpha+2}{2\alpha}z^2\right)^{\alpha},$$
 then we define the following difference operators,
$$
\begin{array}{lll}
\displaystyle \,^{L}\mathcal{\widetilde{B}}_{2}^{\alpha}u(x)
=\frac{1}{h^{\alpha}}\sum\limits_{\ell=0}^{\infty}
\widetilde{\kappa}_{2,\ell}^{(\alpha)}u\left(x-(\ell+1)h\right),
\end{array}
$$
and
$$
\begin{array}{lll}
\displaystyle\,^{R}\mathcal{\widetilde{B}}_{2}^{\alpha}u(x)
=\frac{1}{h^{\alpha}}\sum\limits_{\ell=0}^{\infty}
\widetilde{\kappa}_{2,\ell}^{(\alpha)}u\left(x+(\ell+1)h\right).
\end{array}
$$
 Here,
the coefficients are
$$
\begin{array}{lll}
 \displaystyle\widetilde{\kappa}_{2,\ell}^{(\alpha)}=
\left(\frac{3\alpha+2}{2\alpha}\right)^{\alpha}\sum\limits_{m=0}^{\ell}
\left(\frac{\alpha+2}{3\alpha+2}\right)^m\varpi_{1,m}^{(\alpha)}
\varpi_{1,\ell-m}^{(\alpha)},\;\;\ell=0,1,\ldots
\end{array}
$$
which have following recursion expressions,
$$\left\{
\begin{array}{lll}
 \displaystyle\widetilde{\kappa}_{2,0}^{(\alpha)}=
\left(\frac{3\alpha+2}{2\alpha}\right)^{\alpha}, \vspace{0.2 cm}\\
\displaystyle\widetilde{\kappa}_{2,1}^{(\alpha)}=
\frac{-4\alpha(1+\alpha)}{3\alpha+2}\widetilde{\kappa}_{2,0}^{(\alpha)}, \vspace{0.2 cm}\\
\displaystyle\widetilde{\kappa}_{2,\ell}^{(\alpha)}=
\frac{-4\alpha(1+\alpha)(\alpha-\ell+1)}{(3\alpha+2)\ell}\widetilde{\kappa}_{2,\ell-1}^{(\alpha)}
+\frac{(\alpha+2)(2\alpha-\ell+2)}{(3\alpha+2)\ell}\widetilde{\kappa}_{2,\ell-2}^{(\alpha)},\;\;\ell\geq2.
\end{array}\right.
$$

In particular, these coefficients $\widetilde{\kappa}_{2,\ell}^{(\alpha)}$ also have the following
properties.

\begin{theorem}\label{Th2.5} The coefficients $\widetilde{\kappa}_{2,\ell}^{(\alpha)}\;(\ell=0,1,\ldots)$ have the following properties for $1<\alpha<2$,\vspace{0.2 cm}\\
\verb"(i)"~$\displaystyle\widetilde{\kappa}_{2,1}^{(\alpha)},\widetilde{\kappa}_{2,3}^{(\alpha)}<0$,\;\vspace{0.2 cm}\\
\verb"(ii)"~
$\widetilde{{\kappa}}_{2,4}^{(\alpha)}<0$ if $\alpha\in(1,\alpha_2^{\ast})$, while $\widetilde{\kappa}_{2,4}^{(\alpha)}\geq0$ if $\alpha\in[\alpha_2^{\ast},2)$, where
$\displaystyle\alpha_2^{\ast}\approx1.4917.$
$\widetilde{{\kappa}}_{2,5}^{(\alpha)}<0$ if $\alpha\in(1,\alpha_3^{\ast})$, while $\widetilde{\kappa}_{2,5}^{(\alpha)}\geq0$ if $\alpha\in[\alpha_3^{\ast},2)$, where
$\displaystyle\alpha_3^{\ast}\approx1.4437$;\vspace{0.2 cm}\\
\verb"(iii)"~ $\displaystyle\widetilde{\kappa}_{2,\ell}^{(\alpha)}\geq0$ if $\ell=0,2, or\; \ell \geq6$;\vspace{0.2 cm}\\
\verb"(iv)"~ $\displaystyle\widetilde{\kappa}_{2,\ell}^{(\alpha)}\sim-\frac{\sin\left(\pi\alpha\right)\Gamma(\alpha+1)}{\pi}\ell^{-\alpha-1} $  as $\ell\rightarrow\infty$;\vspace{0.2 cm}\\
\verb"(v) "~$\displaystyle\widetilde{\kappa}_{2,\ell}^{(\alpha)}\rightarrow 0 $ as $\ell\rightarrow\infty$;\vspace{0.2 cm}\\
\verb"(vi) "~$\displaystyle\sum\limits_{\ell=0}^{\infty}\widetilde{\kappa}_{2,\ell}^{(\alpha)}=0.$
\end{theorem}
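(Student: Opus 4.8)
The plan is to exploit the explicit factorization of the generating function $\widetilde{W}_{2}(z)$ together with the sign pattern of the auxiliary coefficients $\varpi_{1,m}^{(\alpha)}$, in close analogy with the proof of Theorem \ref{Th2.1}. First I would record that $\varpi_{1,0}^{(\alpha)}=1$, $\varpi_{1,1}^{(\alpha)}=-\alpha<0$, and $\varpi_{1,m}^{(\alpha)}>0$ for all $m\ge 2$: the recursion $\varpi_{1,m}^{(\alpha)}=(1-\frac{1+\alpha}{m})\varpi_{1,m-1}^{(\alpha)}$ has a negative multiplier only at $m=1$ and $m=2$ (since $1+\alpha<3\le m$ for $m\ge 3$), so two sign changes restore positivity from $m=2$ onward. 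Because the prefactor $(\frac{3\alpha+2}{2\alpha})^{\alpha}$ and the ratio $\lambda:=\frac{\alpha+2}{3\alpha+2}\in(0,1)$ are positive, the sign of $\widetilde{\kappa}_{2,\ell}^{(\alpha)}$ is governed entirely by the convolution $S_\ell:=\sum_{m=0}^{\ell}\lambda^{m}\varpi_{1,m}^{(\alpha)}\varpi_{1,\ell-m}^{(\alpha)}$, in which (for $\ell\ge 3$) the only negative contributions come from the two boundary indices $m=1$ and $m=\ell-1$, each carrying the single negative factor $\varpi_{1,1}^{(\alpha)}$, while the end terms $m=0,\ell$ and all interior terms with $2\le m\le\ell-2$ are nonnegative.

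For the finitely many low-order claims (i), (ii), and the cases $\ell=0,2$ of (iii), I would compute the closed forms of $\widetilde{\kappa}_{2,\ell}^{(\alpha)}$ for $\ell=0,1,\dots,5$ from the displayed recursion, factor out the positive prefactor, and reduce each statement to the sign of a polynomial in $\alpha$ on $(1,2)$. Here $\widetilde{\kappa}_{2,0}^{(\alpha)}>0$ and $\widetilde{\kappa}_{2,1}^{(\alpha)}<0$ are immediate, and $\widetilde{\kappa}_{2,2}^{(\alpha)}>0$ follows because all three terms of $S_2$ are positive (the middle one being $\lambda\,(\varpi_{1,1}^{(\alpha)})^2$). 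The remaining assertions reduce to verifying that an explicit polynomial factor is negative throughout $(1,2)$ (for $\widetilde{\kappa}_{2,3}^{(\alpha)}$), or changes sign exactly once, at $\alpha_2^{\ast}\approx1.4917$ respectively $\alpha_3^{\ast}\approx1.4437$ (for $\widetilde{\kappa}_{2,4}^{(\alpha)},\widetilde{\kappa}_{2,5}^{(\alpha)}$); these thresholds are the unique roots in $(1,2)$ of the corresponding polynomials, whose uniqueness I would confirm by a monotonicity or discriminant check.

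The asymptotic relation (iv) is the analytic heart, and I would obtain it by singularity analysis. The inner quadratic has discriminant equal to the perfect square $4\alpha^{2}$, with roots $z=1$ and $z=\frac{3\alpha+2}{\alpha+2}>1$, hence
$$\widetilde{W}_{2}(z)=\left(\tfrac{3\alpha+2}{2\alpha}\right)^{\alpha}(1-z)^{\alpha}\left(1-\tfrac{\alpha+2}{3\alpha+2}z\right)^{\alpha}.$$
The dominant singularity sits at $z=1$, where the second factor is analytic and equals $(\frac{2\alpha}{3\alpha+2})^{\alpha}$, exactly cancelling the prefactor; thus $\widetilde{W}_{2}(z)\sim(1-z)^{\alpha}$ near $z=1$, so $\widetilde{\kappa}_{2,\ell}^{(\alpha)}$ inherits the asymptotics of the binomial coefficients $\varpi_{1,\ell}^{(\alpha)}=(-1)^{\ell}\binom{\alpha}{\ell}\sim\frac{\ell^{-\alpha-1}}{\Gamma(-\alpha)}$, and the reflection identity $\frac{1}{\Gamma(-\alpha)}=-\frac{\sin(\pi\alpha)\Gamma(\alpha+1)}{\pi}$ yields (iv). Property (v) is then immediate since $\alpha>1$, and (vi) follows by Abel's theorem: the series converges absolutely (terms decay like $\ell^{-\alpha-1}$) and $\sum_{\ell}\widetilde{\kappa}_{2,\ell}^{(\alpha)}=\widetilde{W}_{2}(1)=0$ because the inner quadratic vanishes at $z=1$.

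I expect the main obstacle to be the global positivity (iii) for all $\ell\ge 6$, uniformly in $\alpha\in(1,2)$. The asymptotic (iv) secures positivity only beyond an $\alpha$-dependent threshold, so reaching the explicit cutoff $\ell\ge 6$ requires bounding $S_\ell$ directly. The two negative boundary terms are $-\alpha\lambda\,\varpi_{1,\ell-1}^{(\alpha)}$ and $-\alpha\lambda^{\ell-1}\varpi_{1,\ell-1}^{(\alpha)}$, and I would dominate them using the monotone ratio $\varpi_{1,\ell}^{(\alpha)}/\varpi_{1,\ell-1}^{(\alpha)}=1-\frac{1+\alpha}{\ell}$ to compare consecutive $\varpi$'s. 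The delicate point is that the end term $\varpi_{1,\ell}^{(\alpha)}$ alone does \emph{not} outweigh the $m=1$ term (for instance at $\ell=6$, $\alpha=\tfrac32$ one has $\varpi_{1,\ell}^{(\alpha)}/\varpi_{1,\ell-1}^{(\alpha)}<\alpha\lambda$), so the nearest interior terms must be carried along in the estimate; making this domination effective and uniform in $\alpha$ for every $\ell\ge 6$, rather than merely asymptotically, is precisely where the value of the cutoff enters and is the step I would spend the most care on.
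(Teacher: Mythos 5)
Your treatment of (i), (ii), the low cases of (iii), and of (iv)--(vi) is sound in outline: the factorization
$\widetilde{W}_{2}(z)=\left(\frac{3\alpha+2}{2\alpha}\right)^{\alpha}(1-z)^{\alpha}\left(1-\frac{\alpha+2}{3\alpha+2}z\right)^{\alpha}$
is correct, the exact cancellation of the prefactor at $z=1$ does reduce the coefficient asymptotics to those of $(1-z)^{\alpha}$, and Abel's theorem with $\widetilde{W}_{2}(1)=0$ gives (vi). The paper itself simply defers (i), (iv), (v), (vi) to the proof of Theorem \ref{Th2.1} (i.e.\ to \cite{Ding4}), so your singularity-analysis route is a legitimate self-contained alternative for those items; your sign analysis of $\varpi_{1,m}^{(\alpha)}$ and the identification of the only negative convolution terms (at $m=1$ and $m=\ell-1$) also agree with what the paper exploits.

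The genuine gap is exactly where you flagged it: the positivity $\widetilde{\kappa}_{2,\ell}^{(\alpha)}\geq0$ for all $\ell\geq6$ and all $\alpha\in(1,2)$ is never actually proved in your proposal. You correctly observe that the end term $\varpi_{1,\ell}^{(\alpha)}$ alone cannot absorb the $m=1$ term, but the quantitative domination is left as a plan, and that estimate is the bulk of the paper's proof. The paper carries it out by retaining the $m=0,1,2,\ell-1,\ell$ terms, rewriting them as a bracket multiplying $\varpi_{1,\ell}^{(\alpha)}$, bounding $\left(\frac{\alpha+2}{3\alpha+2}\right)^{\ell}$ from below by its value at $\ell=8$ (legitimate because that term carries a negative factor), and then proving positivity of the resulting expression $S(\alpha,x)$ for $x\geq8$ via the auxiliary polynomial $P(\alpha,x)=2(x-\alpha-1)(x-\alpha-2)S(\alpha,x)$: explicit computations show $\frac{\partial^{2}P}{\partial x^{2}}>0$, $\left.\frac{\partial P}{\partial x}\right|_{x=8}>0$ and $P(\alpha,8)>0$ on $(1,2)$. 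Crucially, this uniform argument only reaches down to $\ell=8$; the cases $\ell=6,7$ are handled separately through explicit factorizations (the polynomials $g_{3}$, $g_{4}$ in the paper) shown to be negative on $(1,2)$. So your hope of a single uniform estimate covering every $\ell\geq6$ is likely to fail as stated: with bounds of this type the cutoff $6$ is not attainable, and a completed proof must either split the range as the paper does or produce a genuinely sharper comparison; until that estimate is supplied, part (iii) for $\ell\geq6$ remains unproved.
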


\begin{proof}
Here, we only consider (ii) and (iii). The others are easily
obtained by using almost the same proof as that of Theorem \ref{Th2.1}.

\verb"(ii)"~In view of the definitions of
$$
\begin{array}{lll}
\displaystyle\widetilde{{\kappa}}_{2,4}^{(\alpha)}=\frac{\alpha(\alpha-1)g_1(\alpha)}{6(3\alpha+2)^4}\widetilde{\kappa}_{2,0}^{(\alpha)},
\end{array}
$$
and
$$
\begin{array}{lll}
\displaystyle\widetilde{{\kappa}}_{2,5}^{(\alpha)}=\frac{2\alpha(1-\alpha)(\alpha-2)g_2(\alpha)}{15(3\alpha+2)^5}\widetilde{\kappa}_{2,0}^{(\alpha)},
\end{array}
$$
where
$$
\begin{array}{lll}
\displaystyle g_1(\alpha)=64\alpha^6+80\alpha^5
-101\alpha^4-224\alpha^3-88\alpha^2+64\alpha+48,
\end{array}
$$
and
$$
\begin{array}{lll}
\displaystyle g_2(\alpha)=64\alpha^6+80\alpha^5
-101\alpha^4-224\alpha^3-88\alpha^2+64\alpha+48.
\end{array}
$$
Applying the numerical computations, we easily know that there exist solutions
$\alpha_2^{\ast}\approx1.4917$ and $\alpha_3^{\ast}\approx1.4437$ such that $g_1(\alpha^\ast)=0$ and $g_2(\alpha^\ast)=0$, respectively.
Through further analysis, one can obtain the required results.

\verb"(iii)"~ For the cases of $\ell=0,2$, the direct computation leads to the desired results.
For $\ell=6,7$, the expressions of them read as
$$
\begin{array}{lll}
\displaystyle \widetilde{{\kappa}}_{2,6}^{(\alpha)}=\frac{\alpha(\alpha-1)(\alpha-2)g_3(\alpha)}
{90(3\alpha+2)^6}\widetilde{{\kappa}}_{2,0}^{(\alpha)},
\end{array}
$$
and
$$
\begin{array}{lll}
\displaystyle \widetilde{{\kappa}}_{2,7}^{(\alpha)}=-\frac{2\alpha(\alpha-1)(\alpha-2)
(\alpha-3)g_4(\alpha)}{315(3\alpha+2)^7}\widetilde{{\kappa}}_{2,0}^{(\alpha)}.
\end{array}
$$

Note that
$$
\begin{array}{lll}
\displaystyle g_3(\alpha)&=&\displaystyle512\alpha^9-192\alpha^8-2840\alpha^7-1811\alpha^6+3360\alpha^5+4892\alpha^4
+1088\alpha^3\vspace{0.2cm}\\&&-2640\alpha^2-2816\alpha-960\vspace{0.2cm}\\&=&-(\alpha-1)^3\left[\alpha^2(4-\alpha^2)
(512\alpha^2+1344\alpha+1704)+(987\alpha^3+7725\alpha^2\right.\vspace{0.2cm}\\&&\left.+16422\alpha+14482)\right]
-6311(\alpha-1)^2-3861\alpha(\alpha-1)-1407<0,
\end{array}
$$
and
$$
\begin{array}{lll}
\displaystyle g_4(\alpha)&=&512\alpha^{10}-64\alpha^9-2936\alpha^8-2101\alpha^7+3619\alpha^6+4172\alpha^5-2324\alpha^4\vspace{0.2cm}\\&&
-7216\alpha^3-6896\alpha^2-3776\alpha-960\vspace{0.2cm}\\&=&
-(\alpha-1)^3\left[\alpha^3(4-\alpha^2)
(512\alpha^2+1472\alpha+1992)+(285\alpha^4+5292\alpha^3\right.\vspace{0.2cm}\\&&\left.+17145\alpha^2+20152\alpha+29497)\right]
-52076(\alpha-1)^2-39589(\alpha-1)\vspace{0.2cm}\\&&-17970<0,
\end{array}
$$
then one easily obtain that $\widetilde{{\kappa}}_{2,\ell}^{(\alpha)}\geq0$ for $\ell=6,7$.

As for $\ell\geq8$, according to their expressions, one has
$$
\begin{array}{lll}
 \displaystyle\widetilde{\kappa}_{2,\ell}^{(\alpha)}&=&\displaystyle
\left(\frac{3\alpha+2}{2\alpha}\right)^{\alpha}\sum\limits_{m=0}^{\ell}
\left(\frac{\alpha+2}{3\alpha+2}\right)^m\varpi_{1,m}^{(\alpha)}
\varpi_{1,\ell-m}^{(\alpha)}\\\vspace{0.2 cm}
&=&\displaystyle\left(\frac{3\alpha+2}{2\alpha}\right)^{\alpha}\left[\varpi_{1,0}^{(\alpha)}
\varpi_{1,\ell}^{(\alpha)}+\frac{\alpha+2}{3\alpha+2}\varpi_{1,1}^{(\alpha)}
\varpi_{1,\ell-1}^{(\alpha)}+\left(\frac{\alpha+2}{3\alpha+2}\right)^2\varpi_{1,2}^{(\alpha)}
\varpi_{1,\ell-2}^{(\alpha)}\right.\\\vspace{0.2 cm}
&&\displaystyle\left.+\left(\frac{\alpha+2}{3\alpha+2}\right)^{\ell-1}\varpi_{1,1}^{(\alpha)}
\varpi_{1,\ell-1}^{(\alpha)}+\left(\frac{\alpha+2}{3\alpha+2}\right)^{\ell}\varpi_{1,0}^{(\alpha)}
\varpi_{1,\ell}^{(\alpha)}
\right]\\\vspace{0.2 cm}
&&\displaystyle+\left(\frac{3\alpha+2}{2\alpha}\right)^{\alpha}\sum\limits_{m=3}^{\ell-2}
\left(\frac{\alpha+2}{3\alpha+2}\right)^m\varpi_{1,m}^{(\alpha)}
\varpi_{1,\ell-m}^{(\alpha)}\\\vspace{0.2 cm}
\end{array}
$$

$$
\begin{array}{lll}
&=&\displaystyle\left(\frac{3\alpha+2}{2\alpha}\right)^{\alpha}\left[1
-\frac{(\alpha+2)\alpha\ell}{(3\alpha+2)(\ell-\alpha-1)}
+\frac{(\alpha-1)(\ell-1)\alpha\ell}{2(\ell-\alpha-1)(\ell-\alpha-2)}\right.\\\vspace{0.2 cm}
&&\displaystyle\left.\times\left(\frac{\alpha+2}{3\alpha+2}\right)^2
+\left(\frac{\alpha+2}{3\alpha+2}\right)^{\ell}\left(1-\frac{(3\alpha+2)\ell\alpha}{(\alpha+2)(\ell-\alpha-1)}\right)
\right]\varpi_{1,\ell}^{(\alpha)}\\\vspace{0.2 cm}
&&\displaystyle+\left(\frac{3\alpha+2}{2\alpha}\right)^{\alpha}\sum\limits_{m=3}^{\ell-2}
\left(\frac{\alpha+2}{3\alpha+2}\right)^m\varpi_{1,m}^{(\alpha)}
\varpi_{1,\ell-m}^{(\alpha)}\\\vspace{0.2 cm}
&\geq&\displaystyle\left(\frac{3\alpha+2}{2\alpha}\right)^{\alpha}\left[1
-\frac{(\alpha+2)\alpha\ell}{(3\alpha+2)(\ell-\alpha-1)}
+\frac{(\alpha-1)(\ell-1)\alpha\ell}{2(\ell-\alpha-1)(\ell-\alpha-2)}\right.\\\vspace{0.2 cm}
&&\displaystyle\left.\times\left(\frac{\alpha+2}{3\alpha+2}\right)^2
+\left(\frac{\alpha+2}{3\alpha+2}\right)^{8}\left(1-\frac{(3\alpha+2)\ell\alpha}{(\alpha+2)(\ell-\alpha-1)}\right)
\right]\varpi_{1,\ell}^{(\alpha)}\\\vspace{0.2 cm}
&&\displaystyle+\left(\frac{3\alpha+2}{2\alpha}\right)^{\alpha}\sum\limits_{m=3}^{\ell-2}
\left(\frac{\alpha+2}{3\alpha+2}\right)^m\varpi_{1,m}^{(\alpha)}
\varpi_{1,\ell-m}^{(\alpha)}.
\end{array}
$$

Denote
$$
\begin{array}{lll}
\displaystyle S(\alpha,x)&=&\displaystyle 1
-\frac{(\alpha+2)\alpha\ell}{(3\alpha+2)(\ell-\alpha-1)}
+\frac{(\alpha-1)(\ell-1)\alpha\ell}{2(\ell-\alpha-1)(\ell-\alpha-2)}\left(\frac{\alpha+2}{3\alpha+2}\right)^2\vspace{0.4 cm}\\
&&\displaystyle
+\left(\frac{\alpha+2}{3\alpha+2}\right)^{8}\left(1-\frac{(3\alpha+2)\ell\alpha}{(\alpha+2)(\ell-\alpha-1)}\right),\;\;x\geq8,
\end{array}
$$
and
$$
\begin{array}{lll}
\displaystyle P(\alpha,x)&=&\displaystyle2(x-\alpha-1)(x-\alpha-2)S(\alpha,x)\vspace{0.2 cm}\\
&=&\displaystyle 2(x-\alpha-1)(x-\alpha-2)
-2\alpha x(x-\alpha-2)\frac{\alpha+2}{3\alpha+2}\vspace{0.2 cm}\\
&&\displaystyle+\alpha x(\alpha-1)(x-1)\left(\frac{\alpha+2}{3\alpha+2}\right)^2
-2\alpha x(x-\alpha-2)\vspace{0.2 cm}
\\&&\displaystyle\times\left(\frac{\alpha+2}{3\alpha+2}\right)^{7}
+2(x-\alpha-1)(x-\alpha-2)\left(\frac{\alpha+2}{3\alpha+2}\right)^{8}.
\end{array}
$$
Then one has
$$
\begin{array}{lll}
\displaystyle \frac{\partial^2 P(\alpha,x)}{\partial x^2}&=&\displaystyle\frac{2}{(3\alpha+2)^8}
\left(729\alpha^{10}+723\alpha^9-2516\alpha^8+3792\alpha^7\right.\vspace{0.2 cm}\\
&&\displaystyle+37952\alpha^6+80800\alpha^5+89344\alpha^4+59648\alpha^3+25344\alpha^2\vspace{0.2 cm}\\
&&\displaystyle\left.+6912\alpha+1024\right)
\vspace{0.2 cm}\\
&&\displaystyle > 0,
\end{array}
$$
 for $\alpha\in(1,2)$, that is to say, $\frac{\partial P(\alpha,x)}{\partial x}$ is an increasing function, i.e.,
$\frac{\partial P(\alpha,x)}{\partial x}\geq\frac{\partial P(\alpha,x)}{\partial x}|_{x=8}$.
Here,
$$
\begin{array}{lll}
\displaystyle \frac{\partial P(\alpha,x)}{\partial x}|_{x=8}&=&\displaystyle\frac{1}{(3\alpha+2)^8}
\left(15315\alpha^{10}+18225\alpha^9-92564\alpha^8-198960\alpha^7\right. \vspace{0.2 cm}\\
&&\displaystyle+59456\alpha^6+611680\alpha^5+883200\alpha^4+662784\alpha^3\vspace{0.2 cm}\\
&&\displaystyle\left.
+301312\alpha^2+86272\alpha+13312\right)\vspace{0.2 cm}
\\
&&\displaystyle > 0.
\end{array}
$$
It immediately follows that $ P(\alpha,x)$ is also an increasing function with respect to $x$ for $1<\alpha<2$.
So, $P(\alpha,x)> P(\alpha,8)$. Note that
$$
\begin{array}{lll}
\displaystyle P(\alpha,8)=&=&\displaystyle\frac{4}{(3\alpha+2)^8}\left(22247\alpha^{10}+52229\alpha^9-44918\alpha^8-246288\alpha^7\right.
\vspace{0.2 cm}\\
&&\displaystyle-196672\alpha^6+202720\alpha^5+512960\alpha^4+450304\alpha^3+221440\alpha^2 \vspace{0.2 cm}
\\
&&\displaystyle\left.+66816\alpha+10752
\right)\vspace{0.2 cm}\\
&&\displaystyle>0,
\end{array}
$$
for $1<\alpha<2$. Then one easily get
$$
\begin{array}{lll}
\displaystyle S(\alpha,x)=\frac{1}{2(x-\alpha-1)(x-\alpha-2)}P(\alpha,x)>0,
\end{array}
$$
which implies that $\widetilde{\kappa}_{2,\ell}^{(\alpha)}\geq 0$ for $\ell\geq8$. Combining the former analysis again gives
$$
\begin{array}{lll}
 \displaystyle\widetilde{\kappa}_{2,\ell}^{(\alpha)}\geq 0,
\end{array}
$$
for $\ell\geq6$. All this ends the proof.
\end{proof}

Similar to the previous discussion, we can similarly obtain the following results.

\begin{theorem}\label{Th2.6} Let $u(x)\in C^{[\alpha]+n+1}(\mathds{R})$ and all the derivatives of $u(x)$ up to
order $[\alpha]+n+2$ belong to
$L_1(\mathds{R})$.
Then
$$
\begin{array}{lll}
\displaystyle\,^{L}\mathcal{\widetilde{B}}_{2}^{\alpha}u(x)
 =\,_{RL}D_{-\infty,x}^{\alpha}u(x)+\sum\limits_{\ell=1}^{n-1}\left(\widetilde{\sigma}_{\ell}^{(\alpha)}
 \,_{RL}D_{-\infty,x}^{\alpha+\ell}u(x)\right)h^{\ell}
+\mathcal{O}(h^n),\;n\geq2,
\end{array}
$$
and
$$
\begin{array}{lll}
\displaystyle\,^{R}\mathcal{\widetilde{B}}_{2}^{\alpha}u(x)
 =\,_{RL}D_{x,+\infty}^{\alpha}u(x)+\sum\limits_{\ell=1}^{n-1}\left(\widetilde{\sigma}_{\ell}^{(\alpha)}
 \,_{RL}D_{x,+\infty}^{\alpha+\ell}u(x)\right)h^{\ell}
+\mathcal{O}(h^n),\;n\geq2,
\end{array}
$$
 hold uniformly on $\mathds{R}$. Here coefficients $\widetilde{\sigma}_{\ell}^{(\alpha)}\;(\ell=1,2,\ldots)$ satisfy equation
$\displaystyle
\frac{\mathrm{e}^{-z}}{z^\alpha}\widetilde{W}_{2}(\mathrm{e}^{-z})=1+\sum\limits_{\ell=1}^{\infty}\widetilde{\sigma}_{\ell}^{(\alpha)}z^\ell
,\;|z|<1$. Especially, the first three coefficients are explicitly expressed as
$$
\begin{array}{lll}
\displaystyle \widetilde{\sigma}_{1}^{(\alpha)}=0, \;\;\widetilde{\sigma}_{2}^{(\alpha)}=
-\frac{2\alpha^2+6\alpha+3}{6\alpha},\;\;
\widetilde{\sigma}_{3}^{(\alpha)}=\frac{3\alpha^3+11\alpha^2+12\alpha+4}{12\alpha^2}.
\end{array}
$$
\end{theorem}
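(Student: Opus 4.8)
The plan is to establish Theorem \ref{Th2.6} by a Fourier-transform argument that parallels the proof of Theorem \ref{Th2.2}, with the new generating function $\widetilde{W}_{2}$ replacing $W_{2}$ throughout. Writing $\widehat{u}(\omega)=\int_{\mathds{R}}u(x)\mathrm{e}^{-i\omega x}\,\mathrm{d}x$ and recalling that $\sum_{\ell=0}^{\infty}\widetilde{\kappa}_{2,\ell}^{(\alpha)}z^{\ell}=\widetilde{W}_{2}(z)$, I would first compute the Fourier symbol of the left operator. Because each term carries the shift $u(x-(\ell+1)h)$, the factor $\mathrm{e}^{-i\omega(\ell+1)h}$ is produced, and summing (the interchange of summation and transform being legitimate since $\widetilde{\kappa}_{2,\ell}^{(\alpha)}$ is absolutely summable by Theorem \ref{Th2.5}(iv)) gives $$\mathcal{F}\!\left[\,^{L}\mathcal{\widetilde{B}}_{2}^{\alpha}u\right](\omega)=\frac{1}{h^{\alpha}}\,\mathrm{e}^{-i\omega h}\widetilde{W}_{2}\!\left(\mathrm{e}^{-i\omega h}\right)\widehat{u}(\omega).$$ Putting $z=i\omega h$ and using $z^{\alpha}=(i\omega)^{\alpha}h^{\alpha}$, this symbol equals $(i\omega)^{\alpha}\,\Phi(z)\,\widehat{u}(\omega)$, where $\Phi(z):=\dfrac{\mathrm{e}^{-z}}{z^{\alpha}}\widetilde{W}_{2}(\mathrm{e}^{-z})$ is exactly the generating function of the coefficients $\widetilde{\sigma}_{\ell}^{(\alpha)}$.

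Since $(i\omega)^{\alpha+\ell}$ is the Fourier symbol of $\,_{RL}D_{-\infty,x}^{\alpha+\ell}$, substituting the defining expansion $\Phi(z)=1+\sum_{\ell\ge1}\widetilde{\sigma}_{\ell}^{(\alpha)}z^{\ell}$ and inverting termwise reproduces the claimed formula, the powers $h^{\ell}$ arising from $(i\omega h)^{\ell}$. Truncating after $\ell=n-1$ leaves a remainder whose symbol is $(i\omega)^{\alpha}\big[\Phi(i\omega h)-\sum_{\ell=0}^{n-1}\widetilde{\sigma}_{\ell}^{(\alpha)}(i\omega h)^{\ell}\big]\widehat{u}(\omega)$, with $\widetilde{\sigma}_{0}^{(\alpha)}=1$, so the task reduces to bounding the inverse transform of this quantity by $\mathcal{O}(h^{n})$ uniformly in $x$.

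The main obstacle is this uniform remainder estimate. I would first check that $\Phi$ is analytic near $z=0$ with $\Phi(0)=1$: the inner quadratic $Q(w)=\tfrac{3\alpha+2}{2\alpha}-\tfrac{2(\alpha+1)}{\alpha}w+\tfrac{\alpha+2}{2\alpha}w^{2}$ satisfies $Q(1)=0$ and $Q'(1)=-1$, so $Q(\mathrm{e}^{-z})=z\big(1+O(z)\big)$, whence $\widetilde{W}_{2}(\mathrm{e}^{-z})=z^{\alpha}\big(1+O(z)\big)^{\alpha}$ and the branch factor $z^{-\alpha}$ is cancelled. On the low-frequency range $|\omega h|\le c$, with $c$ smaller than the radius of convergence of $\Phi$, the Taylor remainder of $\Phi$ is $O(|\omega h|^{n})$, so the remainder symbol is dominated by $h^{n}|\omega|^{\alpha+n}|\widehat{u}(\omega)|$; the hypothesis that all derivatives up to order $[\alpha]+n+2$ lie in $L_{1}(\mathds{R})$ forces $|\omega|^{\alpha+n}\widehat{u}(\omega)\in L_{1}(\mathds{R})$, so the inverse transform is $\mathcal{O}(h^{n})$ uniformly in $x$. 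On the high-frequency tail $|\omega h|>c$, the rapid decay of $\widehat{u}$ guaranteed by the same integrability assumption again yields an $\mathcal{O}(h^{n})$ contribution. These are precisely the estimates underlying Theorem \ref{Th2.2}, now applied to $\Phi$.

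Finally, to recover the explicit coefficients I would expand $\Phi$ about $z=0$. Using $\mathrm{e}^{-z}-1=-z+\tfrac{z^{2}}{2}-\tfrac{z^{3}}{6}+\cdots$ together with $Q(\mathrm{e}^{-z})=-(\mathrm{e}^{-z}-1)+\tfrac{\alpha+2}{2\alpha}(\mathrm{e}^{-z}-1)^{2}$ gives $Q(\mathrm{e}^{-z})=z\big(1+\tfrac{1}{\alpha}z-\tfrac{\alpha+3}{3\alpha}z^{2}+\cdots\big)$; raising to the power $\alpha$, dividing by $z^{\alpha}$, and multiplying by $\mathrm{e}^{-z}$ then yields $\Phi(z)=1+\widetilde{\sigma}_{2}^{(\alpha)}z^{2}+\widetilde{\sigma}_{3}^{(\alpha)}z^{3}+\cdots$ with $\widetilde{\sigma}_{1}^{(\alpha)}=0$ and the stated values of $\widetilde{\sigma}_{2}^{(\alpha)}$ and $\widetilde{\sigma}_{3}^{(\alpha)}$. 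The right operator $\,^{R}\mathcal{\widetilde{B}}_{2}^{\alpha}$ is treated identically after the reflection $\omega\mapsto-\omega$ (equivalently $z\mapsto-z$), which converts the symbol $(i\omega)^{\alpha+\ell}$ of $\,_{RL}D_{-\infty,x}^{\alpha+\ell}$ into the symbol $(-i\omega)^{\alpha+\ell}$ of $\,_{RL}D_{x,+\infty}^{\alpha+\ell}$ while leaving the coefficients $\widetilde{\sigma}_{\ell}^{(\alpha)}$ unchanged.
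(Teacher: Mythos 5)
Your proposal is correct, and it is essentially the proof the paper intends: the paper gives no argument for Theorem \ref{Th2.6} beyond ``similar to the previous discussion,'' i.e.\ the Fourier-symbol expansion underlying Theorem \ref{Th2.2} (from \cite{Ding4}) with $\widetilde{W}_{2}$ and the shift $e^{-z}$ in place of $W_{2}$ and $e^{z}$, which is exactly what you carry out. Your symbol computation, the low/high-frequency remainder estimate, and the expansion of $e^{-z}Q(e^{-z})^{\alpha}z^{-\alpha}$ (which indeed yields $\widetilde{\sigma}_{1}^{(\alpha)}=0$, $\widetilde{\sigma}_{2}^{(\alpha)}=-\frac{2\alpha^{2}+6\alpha+3}{6\alpha}$, $\widetilde{\sigma}_{3}^{(\alpha)}=\frac{3\alpha^{3}+11\alpha^{2}+12\alpha+4}{12\alpha^{2}}$) all check out.
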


Define another fractional-compact difference operator $\mathcal{\widetilde{L}}$ as
$$
\begin{array}{lll}
\mathcal{\widetilde{L}}u(x)=\left(1+\widetilde{\sigma}_{2}^{(\alpha)}h^2\delta_x^2\right)u(x),
\end{array}
$$
then the corresponding theorem is stated below.

\begin{theorem}\label{Th2.7}
Let $u(x)\in C^{[\alpha]+4}(\mathds{R})$ and all the derivatives of $u(x)$
up to order $[\alpha]+5$ belong to
$L_1(\mathds{R})$.
Then there hold
$$
\begin{array}{lll}
\displaystyle\,^{L}\mathcal{\widetilde{B}}_{2}^{\alpha}u(x)
 =\mathcal{\widetilde{L}}\,_{RL}D_{-\infty,x}^{\alpha}u(x)
+\mathcal{O}(h^3),
\end{array}
$$
and
$$
\begin{array}{lll}
\displaystyle\,^{R}\mathcal{\widetilde{B}}_{2}^{\alpha}u(x)
 =\mathcal{\widetilde{L}}\,_{RL}D_{x,+\infty}^{\alpha}u(x)
+\mathcal{O}(h^3),
\end{array}
$$
uniformly for $x\in\mathds{R}$.
\end{theorem}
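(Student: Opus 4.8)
The plan is to follow verbatim the argument used for Theorem~\ref{Th2.3}, now feeding in the asymptotic expansion of Theorem~\ref{Th2.6} in place of that of Theorem~\ref{Th2.2}. Concretely, I would set $n=3$ in the first expansion of Theorem~\ref{Th2.6}. Since the summation index runs over $\ell=1,\dots,n-1=2$ and the leading coefficient vanishes, $\widetilde{\sigma}_{1}^{(\alpha)}=0$, only the $\ell=2$ term survives, giving
$$
\,^{L}\mathcal{\widetilde{B}}_{2}^{\alpha}u(x)=\,_{RL}D_{-\infty,x}^{\alpha}u(x)+\widetilde{\sigma}_{2}^{(\alpha)}h^{2}\,_{RL}D_{-\infty,x}^{\alpha+2}u(x)+\mathcal{O}(h^{3}).
$$
The hypotheses $u\in C^{[\alpha]+4}(\mathds{R})$ with derivatives up to order $[\alpha]+5$ in $L_1(\mathds{R})$ are exactly those required by Theorem~\ref{Th2.6} at $n=3$, so this first step is legitimate and the expansion is uniform on $\mathds{R}$.

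Next I would convert the fractional term $\,_{RL}D_{-\infty,x}^{\alpha+2}u(x)$ into a compact operator acting on $\,_{RL}D_{-\infty,x}^{\alpha}u(x)$. Using the composition identity $\,_{RL}D_{-\infty,x}^{\alpha+2}u(x)=\frac{\mathrm{d}^2}{\mathrm{d}x^2}\big(\,_{RL}D_{-\infty,x}^{\alpha}u(x)\big)$ already exploited in the proof of Theorem~\ref{Th2.3}, and then replacing the classical second derivative by the central difference operator via $\frac{\mathrm{d}^2}{\mathrm{d}x^2}v(x)=\delta_x^2 v(x)+\mathcal{O}(h^2)$ with $v=\,_{RL}D_{-\infty,x}^{\alpha}u$, the middle term becomes $\widetilde{\sigma}_{2}^{(\alpha)}h^{2}\big(\delta_x^2\,_{RL}D_{-\infty,x}^{\alpha}u(x)+\mathcal{O}(h^2)\big)$. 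The extra piece here is $\widetilde{\sigma}_{2}^{(\alpha)}h^{2}\cdot\mathcal{O}(h^2)=\mathcal{O}(h^4)$, which is absorbed into the $\mathcal{O}(h^3)$ remainder. Recognizing the operator $1+\widetilde{\sigma}_{2}^{(\alpha)}h^2\delta_x^2$ as $\mathcal{\widetilde{L}}$ then yields $\,^{L}\mathcal{\widetilde{B}}_{2}^{\alpha}u(x)=\mathcal{\widetilde{L}}\,_{RL}D_{-\infty,x}^{\alpha}u(x)+\mathcal{O}(h^3)$, which is the first claimed identity.

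The only point that needs care — and the one I regard as the (modest) crux — is justifying the uniform $\mathcal{O}(h^2)$ remainder in the central-difference replacement $\frac{\mathrm{d}^2}{\mathrm{d}x^2}v=\delta_x^2 v+\mathcal{O}(h^2)$ with $v=\,_{RL}D_{-\infty,x}^{\alpha}u$. This Taylor estimate requires sufficient uniform control on the fourth derivative of $v$; but the regularity hypotheses stated here are identical to those of Theorem~\ref{Th2.3}, under which the very same replacement was carried out successfully, so the required smoothness of $v$ is already guaranteed by $u\in C^{[\alpha]+4}(\mathds{R})$ together with the $L_1$ decay of its derivatives through order $[\alpha]+5$. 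Finally, the right-hand identity for $\,^{R}\mathcal{\widetilde{B}}_{2}^{\alpha}$ follows by the identical argument applied to the second expansion of Theorem~\ref{Th2.6}, with every occurrence of $\,_{RL}D_{-\infty,x}$ replaced by $\,_{RL}D_{x,+\infty}$, which completes the proof.
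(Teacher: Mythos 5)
Your proposal is correct and is exactly the argument the paper intends: the paper's own ``proof'' of Theorem~\ref{Th2.7} simply states that it is almost the same as that of Theorem~\ref{Th2.3}, and your write-up carries out precisely that adaptation (Theorem~\ref{Th2.6} with $n=3$, $\widetilde{\sigma}_{1}^{(\alpha)}=0$, the composition identity $\,_{RL}D_{-\infty,x}^{\alpha+2}=\frac{\mathrm{d}^2}{\mathrm{d}x^2}\,_{RL}D_{-\infty,x}^{\alpha}$, and the central-difference replacement with the $\mathcal{O}(h^4)$ term absorbed into $\mathcal{O}(h^3)$). Your version is in fact more explicit than the paper's, since you spell out the steps the paper leaves as an exercise.
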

\begin{proof}
The proof is almost the same as that of Theorem \ref{Th2.3}, so we omit the proof here or leave to the
reads as an exercise.
\end{proof}

By the similar technique, another 3rd-order fractional-compact numerical approximation formula for Riesz derivative
 reads as
\begin{equation}\label{eq8}
\displaystyle \mathcal{\widetilde{L}}\frac{\partial^\alpha
u(x)}{\partial{|x|^\alpha}}= -\frac{1}{2\cos\left(\frac{\pi\alpha}{2}\right)}\left(
\,^{L}\mathcal{\widetilde{A}}_{2}^{\alpha}u(x)+\,^{R}\mathcal{\widetilde{A}}_{2}^{\alpha}u(x)\right)+\mathcal {O}(h^3),
\end{equation}
where operators $\,^{L}\mathcal{\widetilde{A}}_{2}^{\alpha}$ and $\,^{R}\mathcal{\widetilde{A}}_{2}^{\alpha}$ are defined by
$$
\begin{array}{lll}
\displaystyle
\,^{L}\mathcal{\widetilde{A}}_{2}^{\alpha}u(x)
=\frac{1}{h^{\alpha}}\sum\limits_{\ell=0}^{[\frac{x-a}{h}]-1}
\widetilde{\kappa}_{2,\ell}^{(\alpha)}u\left(x-(\ell+1)h\right),
\end{array}
$$
and
$$
\begin{array}{lll}
\displaystyle
\,^{R}\mathcal{\widetilde{A}}_{2}^{\alpha}u(x)
=\frac{1}{h^{\alpha}}\sum\limits_{\ell=0}^{[\frac{b-x}{h}]-1}
\widetilde{\kappa}_{2,\ell}^{(\alpha)}u\left(x+(\ell+1)h\right).
\end{array}
$$

\section{Generalized numerical algorithm formulas and their fractional-compact forms}\label{sec:3}

\subsection{Generalized numerical algorithm formulas for Riesz derivatives}\label{sec:3.1}
 At present, for almost all of the numerical algorithms for fractional derivatives,
 they are all through some of the other grid point values to calculate the value of a particular grid point.
 However, sometimes we need to calculate the arbitrary point values. At this time, the existing formulas cannot be used,
 therefore it is necessary to establish some more general numerical algorithm formulas for fractional derivatives. Here,
 we firstly give the more general numerical algorithm formulas for Riemann-Liouville (and Riesz) derivatives.

\begin{theorem}\label{Th3.1}({\bf Generalized numerical approximation formula for Riemann-Liouville derivatives})
 Let $u(x)\in C^{[\alpha]+p+1}(\mathds{R})$ and all the derivatives of $u(x)$
up to order $[\alpha]+p+2$ belong to
$L_1(\mathds{R})$. For any $s\in \mathds{R}$ and set
$$
\begin{array}{lll}
\displaystyle \,^{L}\mathcal{B}_{p,s}^{\alpha}u(x)
=\frac{1}{h^{\alpha}}\sum\limits_{\ell=0}^{\infty}
\mu_{p,\ell}^{(\alpha,s)}u\left(x-(\ell+s) h\right),
\end{array}
$$
and
$$
\begin{array}{lll}
\displaystyle \,^{R}\mathcal{B}_{p,s}^{\alpha}u(x)
=\frac{1}{h^{\alpha}}\sum\limits_{\ell=0}^{\infty}
\mu_{p,\ell}^{(\alpha,s)}u\left(x+(\ell+s) h\right).
\end{array}
$$
Here, the coefficients
$\displaystyle\mu_{p,\ell}^{(\alpha,s)}\;\left(
\ell=0,1,\ldots\right)$ can be determined by the following
generating functions ${ G}_{p,s}(z)$
$$
\begin{array}{lll}
\displaystyle { G}_{p,s}(z)=\left((1-z)+\sum_{k=2}^{p}
\frac{\vartheta_{{k-1},{k-1}}^{(\alpha,s)}}{\alpha}(1-z)^k\right)^{\alpha},
\end{array}
$$
that is,
$$
\begin{array}{lll}
\displaystyle{ G}_{p,s}(z)=\sum\limits_{\ell=0}^{\infty}\mu_{p,\ell}^{(\alpha,s)}z^\ell,\;|z|<1,
\end{array}
$$
where the parameters $\vartheta_{{{k-1},{k-1}}}^{(\alpha,s)}$ $(k=2,3,\ldots)$ can be obtained by the following equation
$$
\begin{array}{lll}
\displaystyle G_{k,s}\left(e^{-z}\right)\frac{e^{-sz}}{z^\alpha}=1-\sum_{\ell=1}^{\infty}\vartheta_{k,\ell}^{(\alpha,s)}z^{\ell},\;\;k=1,2,\ldots
\end{array}
$$

Then the left and right Riemann-Liouville derivative values at any point $x=x_{j}+sh$ can be approximated by
$$
\begin{array}{lll}
\displaystyle \,_{RL}D_{-\infty,x}^{\alpha}u(x)|_{x=x_{j}+sh}
=\,^{L}\mathcal{B}_{p,s}^{\alpha}u(x_j+sh)+\mathcal{O}(h^p),\;j=0,1,\ldots,\;p\geq1,
\end{array}
$$
and
$$
\begin{array}{lll}
\displaystyle \,_{RL}D_{x,+\infty}^{\alpha}u(x)|_{x=x_{j}+sh}
=\,^{R}\mathcal{B}_{p,s}^{\alpha}u(x_j+sh)+\mathcal{O}(h^p),\;j=0,1,\ldots,\;p\geq1,
\end{array}
$$
respectively.
\end{theorem}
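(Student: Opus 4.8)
The plan is to work in the Fourier domain, where the Riemann--Liouville derivative acts as multiplication by a symbol and the order of accuracy can be read off directly from the generating function. Writing $\widehat{u}(\omega)=\int_{\mathds{R}}u(x)\mathrm{e}^{-\mathrm{i}\omega x}\,\mathrm{d}x$, recall that $\mathcal{F}\bigl[\,_{RL}D_{-\infty,x}^{\alpha}u\bigr](\omega)=(\mathrm{i}\omega)^{\alpha}\widehat{u}(\omega)$. First I would apply the Fourier transform termwise to $\,^{L}\mathcal{B}_{p,s}^{\alpha}u$; the shift rule together with the generating-function identity $\sum_{\ell}\mu_{p,\ell}^{(\alpha,s)}z^{\ell}=G_{p,s}(z)$ evaluated at $z=\mathrm{e}^{-\mathrm{i}\omega h}$ turns the operator into multiplication by the symbol $h^{-\alpha}\mathrm{e}^{-\mathrm{i}\omega s h}G_{p,s}(\mathrm{e}^{-\mathrm{i}\omega h})$. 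Setting $\zeta=\mathrm{i}\omega h$ and invoking the defining relation $G_{p,s}(\mathrm{e}^{-\zeta})\mathrm{e}^{-s\zeta}/\zeta^{\alpha}=1-\sum_{\ell\ge1}\vartheta_{p,\ell}^{(\alpha,s)}\zeta^{\ell}$, the symbol becomes $(\mathrm{i}\omega)^{\alpha}\bigl(1-\sum_{\ell\ge1}\vartheta_{p,\ell}^{(\alpha,s)}(\mathrm{i}\omega h)^{\ell}\bigr)$, so that the Fourier transform of the approximation error $\,^{L}\mathcal{B}_{p,s}^{\alpha}u-\,_{RL}D_{-\infty,x}^{\alpha}u$ equals $-(\mathrm{i}\omega)^{\alpha}\sum_{\ell\ge1}\vartheta_{p,\ell}^{(\alpha,s)}(\mathrm{i}\omega h)^{\ell}\,\widehat{u}(\omega)$.

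The heart of the matter -- and the step I expect to be the main obstacle -- is to show that the construction forces $\vartheta_{p,\ell}^{(\alpha,s)}=0$ for $\ell=1,\ldots,p-1$, so that the leading surviving term is of order $h^{p}$. I would prove this by induction on $p$. Factoring $1-\mathrm{e}^{-\zeta}=\zeta\phi(\zeta)$ with $\phi(0)=1$ gives $G_{p,s}(\mathrm{e}^{-\zeta})=\zeta^{\alpha}R_{p}(\zeta)^{\alpha}$, where $R_{p}(\zeta)=\phi(\zeta)\bigl(1+\sum_{k=2}^{p}\tfrac{\vartheta_{k-1,k-1}^{(\alpha,s)}}{\alpha}(\zeta\phi(\zeta))^{k-1}\bigr)$ satisfies $R_{p}(0)=1$, and the symbol factor is exactly $R_{p}(\zeta)^{\alpha}\mathrm{e}^{-s\zeta}$. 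The base case $p=1$ (the plain shifted Gr\"{u}nwald--Letnikov weight, $R_{1}=\phi$) is vacuous. For the step from $p$ to $p+1$, the generating function acquires precisely one extra term, so $R_{p+1}=R_{p}+\tfrac{\vartheta_{p,p}^{(\alpha,s)}}{\alpha}\zeta^{p}\phi(\zeta)^{p+1}$; since $R_{p}(0)=\phi(0)=1$, raising to the power $\alpha$ yields $R_{p+1}^{\alpha}=R_{p}^{\alpha}\bigl(1+\vartheta_{p,p}^{(\alpha,s)}\zeta^{p}+\mathcal{O}(\zeta^{p+1})\bigr)$, where the prefactor $\tfrac{1}{\alpha}$ in the generating function is exactly what cancels the $\alpha$ produced by differentiating the power. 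Multiplying by $\mathrm{e}^{-s\zeta}$ and using the induction hypothesis $R_{p}^{\alpha}\mathrm{e}^{-s\zeta}=1-\vartheta_{p,p}^{(\alpha,s)}\zeta^{p}+\mathcal{O}(\zeta^{p+1})$, the two $\zeta^{p}$ contributions cancel and $R_{p+1}^{\alpha}\mathrm{e}^{-s\zeta}=1+\mathcal{O}(\zeta^{p+1})$, i.e. $\vartheta_{p+1,\ell}^{(\alpha,s)}=0$ for $\ell\le p$. This also confirms that the coefficient $\vartheta_{k-1,k-1}^{(\alpha,s)}$ inserted at each stage is precisely the one annihilating the previous leading error.

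With the vanishing in hand, the error symbol reduces to $-(\mathrm{i}\omega)^{\alpha}\sum_{\ell\ge p}\vartheta_{p,\ell}^{(\alpha,s)}(\mathrm{i}\omega h)^{\ell}\widehat{u}(\omega)$, whose dominant part is $-\vartheta_{p,p}^{(\alpha,s)}h^{p}(\mathrm{i}\omega)^{\alpha+p}\widehat{u}(\omega)$, the transform of $-\vartheta_{p,p}^{(\alpha,s)}h^{p}\,_{RL}D_{-\infty,x}^{\alpha+p}u(x)$. The remaining, routine task is to invert the Fourier transform and bound the remainder: the hypotheses $u\in C^{[\alpha]+p+1}(\mathds{R})$ with all derivatives up to order $[\alpha]+p+2$ in $L_{1}(\mathds{R})$ guarantee that $(\mathrm{i}\omega)^{\alpha+p}\widehat{u}(\omega)$ and $(\mathrm{i}\omega)^{\alpha+p+1}\widehat{u}(\omega)$ are integrable, which controls the tail of the series uniformly and yields the bound $\mathcal{O}(h^{p})$ independently of $x$ (and hence of $j$). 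Finally, the right-sided estimate for $\,^{R}\mathcal{B}_{p,s}^{\alpha}$ follows by the identical computation with $\mathrm{e}^{-\mathrm{i}\omega h}$ replaced by $\mathrm{e}^{+\mathrm{i}\omega h}$ (equivalently the reflection $x\mapsto-x$), since $\,_{RL}D_{x,+\infty}^{\alpha}$ carries the conjugate symbol $(-\mathrm{i}\omega)^{\alpha}$.
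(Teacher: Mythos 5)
Your proof is correct and follows essentially the same route the paper relies on: the paper omits the argument entirely, deferring to the Fourier-symbol method of Theorem 4 in \cite{Ding4}, which is exactly your approach of evaluating the generating function at $\mathrm{e}^{-\mathrm{i}\omega h}$ and reading the accuracy off the expansion $G_{p,s}(\mathrm{e}^{-z})\mathrm{e}^{-sz}/z^{\alpha}=1-\sum_{\ell\geq1}\vartheta_{p,\ell}^{(\alpha,s)}z^{\ell}$. Your induction showing $\vartheta_{p,\ell}^{(\alpha,s)}=0$ for $\ell<p$ (each inserted term $\vartheta_{k-1,k-1}^{(\alpha,s)}(1-z)^{k}/\alpha$ cancelling the previous leading error) is sound and supplies precisely the detail the paper leaves implicit.
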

\begin{proof}
This theorem can be viewed as the extension of Theorem 4 in \cite{Ding4}. The proof can be finished
 by almost the same method and we omit it here.
\end{proof}

{\it{\bf Remark 2:} The above theorem is called the generalized numerical approximation formula for
 Riemann-Liouville derivatives, due to any point $x=x_{j}+sh, (j=0,1,\ldots)$ on the real axis can be calculated.
 Here, the needed value $x$ can be determined by selecting the appropriate parameter $s$, $x_{j}$ represent the grid point values.}

Accordingly, the generalized numerical algorithms for Riesz derivatives can be obtained by,
\begin{eqnarray}
\displaystyle \frac{\partial^\alpha
u(x)}{\partial{|x|^\alpha}}\left|_{x=x_{j}+sh}\right. =\displaystyle
 -\frac{1}{2\cos\left(\frac{\pi\alpha}{2}\right)}\left[\,^{R}\mathcal{B}_{p,s}^{\alpha}+\,^{L}\mathcal{B}_{p,s}^{\alpha}
\right]u(x_j+sh)\displaystyle+\mathcal {O}(h^p),\;p\geq1.\nonumber
\end{eqnarray}

Below, we only study the cases for $p=2,3,4$ in details. Due to the fact that case $p=1$ is the same
as the Gr\"{u}nwald-Letnikov formula, and the fact that
the cases for $p\geq5$ can be similarly obtained in view of the above theorem, we carefully consider cases with
$p=2,3,4$ as follows.

\verb"(i)"~$p=2$

According to Theorem \ref{Th3.1}, we easily know that the generating function for $p=2$ is
$$
\begin{array}{lll}
\displaystyle { G}_{2,s}(z)=\left((1-z)+
\frac{\alpha+2s}{2\alpha}(1-z)^2\right)^{\alpha},
\end{array}
$$
and the coefficients $\mu_{2,\ell}^{(\alpha,s)}\;(\ell=0,1,\ldots)$ are read as,
\begin{equation*}
 \displaystyle\mu_{2,\ell}^{(\alpha,s)}=
d_{21}^{\alpha}\sum\limits_{m=0}^{\ell}
d_{22}^m\varpi_{1,m}^{(\alpha)}
\varpi_{1,\ell-m}^{(\alpha)},\;\;\ell=0,1\ldots
\end{equation*}
where,
\begin{equation*}
 \displaystyle
 d_{21}=\frac{3\alpha+2s}{2\alpha},\;\;d_{22}=\frac{\alpha+2s}{3\alpha+2s}.
\end{equation*}

Furthermore, the coefficients $\mu_{2,\ell}^{(\alpha)}\;(\ell=0,1,\ldots)$ can be obtained by the
 following recurrence relationships,
$$\left\{
\begin{array}{lll}
 \displaystyle\mu_{2,0}^{(\alpha,s)}&=&\displaystyle
\left(\frac{3\alpha+2s}{2\alpha}\right)^{\alpha},\vspace{0.2 cm}\\
 \displaystyle\mu_{2,1}^{(\alpha,s)}&=&\displaystyle
\frac{-4\alpha(\alpha+s)}{3\alpha+2s}\mu_{2,0}^{(\alpha,s)},\vspace{0.2 cm}\\ \displaystyle
\mu_{2,\ell}^{(\alpha,s)}&=&\displaystyle
\frac{1}{(3\alpha+2s)\ell}\left[-4(\alpha+s)(\alpha-\ell+1)\mu_{2,\ell-1}^{(\alpha,s)}\right.
\vspace{0.2 cm}\\ &&\displaystyle\left.+(\alpha+2s)(2\alpha-\ell+2)\mu_{2,\ell-2}^{(\alpha,s)}
\right],\;\;\ell\geq2.
\end{array}\right.
$$

\verb"(ii)"~$p=3$

For this case, the generating function is given below,
$$
\begin{array}{lll}
\displaystyle { G}_{3,s}(z)=\left((1-z)+
\frac{\alpha+2s}{2\alpha}(1-z)^2+\frac{2\alpha^2+6\alpha s+3s^2}{6\alpha^2}(1-z)^3\right)^{\alpha},
\end{array}
$$
the coefficients $\mu_{3,\ell}^{(\alpha,s)}\;(\ell=0,1,\ldots)$ can be obtained by simple calculations,
\begin{equation*}
 \displaystyle\mu_{3,\ell}^{(\alpha,s)}=
d_{31}^{\alpha}
\sum\limits_{\ell_1=0}^{\ell}
\sum\limits_{\ell_2=0}^{\left[\frac{1}{2}\ell_1\right]}
\frac{\left(-1\right)^{\ell_1+\ell_2}(\ell_1-\ell_2)!}{\ell_2!(\ell_1-2\ell_2)!}d_{32}^{\ell_1-2\ell_2}
d_{33}^{\ell_2}
{\varpi}_{1,\ell-\ell_1}^{(\alpha)}
{\varpi}_{1,\ell_1-\ell_2}^{(\alpha)},\;\ell=0,1,\ldots
\end{equation*}
Here,
$$\begin{array}{lll}
\displaystyle
d_{31}=\frac{11\alpha^2+12\alpha s+3s^2}{6\alpha^2},\;\;
d_{32}=-\frac{7\alpha^2+18\alpha s+6s^2}{11\alpha^2+12\alpha s+3s^2},\;\;\\\vspace{0.2 cm}\displaystyle
d_{33}=\frac{2\alpha^2+6\alpha s+3s^2}{11\alpha^2+12\alpha s+3s^2}.
\end{array}
$$

The recursion relations for coefficients $\mu_{3,\ell}^{(\alpha,s)}\;(\ell=0,1,\ldots)$ read as,
$$\left\{
\begin{array}{lll}
\displaystyle
\mu_{3,0}^{(\alpha,s)}&=&\displaystyle\left(\frac{11\alpha^2+12\alpha s+3s^2}{6\alpha^2}\right)^{\alpha},\;\vspace{0.2 cm}\\
\displaystyle\mu_{3,1}^{(\alpha,s)}&=&\displaystyle-\frac{3\alpha(6\alpha^2+10\alpha s +3s^2)}{11\alpha^2+12\alpha s +3s^2}
\mu_{3,0}^{(\alpha,s)},\;\vspace{0.2 cm}\\
\displaystyle\mu_{3,2}^{(\alpha,s)}&=&\displaystyle\frac{3\alpha}
{2(11\alpha^2+12\alpha s +3s^2)^2}\left(108\alpha^5+360\alpha^4 s-42\alpha^4+408\alpha^3s^2\right.
\vspace{0.2 cm}\\&&\left.-112\alpha^3 s+180\alpha^2 s^3-132\alpha^2 s^2+27\alpha s^4-60\alpha s^3-9s^4
\right)\mu_{3,0}^{(\alpha,s)}
,\vspace{0.2 cm}\\
\displaystyle\mu_{3,\ell}^{(\alpha,s)}&=&\displaystyle\frac{1}{(11\alpha^2+12\alpha s+3s^2)\ell}\left[-3(6\alpha^2+10\alpha s+3s^2)(\alpha-\ell+1)
\mu_{3,\ell-1}^{(\alpha,s)}\right.\vspace{0.2 cm}
\\&&\displaystyle+3(3\alpha^2+8\alpha s+3s^2)(2\alpha-\ell+2)\mu_{3,\ell-2}^{(\alpha,s)}\vspace{0.2 cm}\\&&\left.
-(2\alpha^2+6\alpha s+3s^2)(3\alpha-\ell+3)\mu_{3,\ell-3}^{(\alpha,s)}
\right],\;\ell\geq3.
\end{array}\right.
$$

\verb"(iii)"~$p=4$

As before, we can also easily get the following generating function for $p=4$,
$$
\begin{array}{lll}
\displaystyle { G}_{4,s}(z)&=&\displaystyle\left((1-z)+
\frac{\alpha+2s}{2\alpha}(1-z)^2+\frac{2\alpha^2+6\alpha s+3s^2}{6\alpha^2}(1-z)^3\right.\vspace{0.4 cm}\\
&&\displaystyle\left.+\frac{3\alpha^3+11\alpha^2 s+9\alpha s^2+2s^3}{12\alpha^3}(1-z)^4
\right)^{\alpha}.
\end{array}
$$
By the back-of-the-envelope calculation, we can get the expressions of coefficients
$\mu_{4,\ell}^{(\alpha,s)}\;(\ell=0,1,\ldots)$ as follows,
$$\begin{array}{lll}
\displaystyle \mu_{4,\ell}^{(\alpha,s)}=d_{41}^{\alpha}
\sum\limits_{\ell_1=0}^{\ell}
\sum\limits_{\ell_2=0}^{\left[\frac{2}{3}\ell_1\right]}
\sum\limits_{\ell_3=\max\{0,2\ell_2-\ell_1\}}^{\left[\frac{1}{2}\ell_2\right]}
P(\alpha,\ell_1,\ell_2,\ell_3)
\varpi_{1,\ell-\ell_1}^{(\alpha)}\varpi_{1,\ell_1-\ell_2}^{(\alpha)},
\end{array}
$$
where
$$\begin{array}{lll}
\displaystyle
P(\alpha,\ell_1,\ell_2,\ell_3)=\frac{\left(-1\right)^{\ell_1+\ell_2}\left(\ell_1-\ell_2\right)!}
{\ell_3!\left(\ell_2-2\ell_3\right)!\left(\ell_1+\ell_3-2\ell_2\right)!}d_{42}^{\ell_1+\ell_3-2\ell_2}d_{43}^{\ell_2-2\ell_3}d_{44}^{\ell_3},
\end{array}
$$
and
$$\begin{array}{lll}
\displaystyle
d_{41}=\frac{25\alpha^3+35\alpha^2 s+15\alpha s^2+2s^3}{12\alpha^3},\;\;
d_{42}=-\frac{23\alpha^3+69\alpha^2 s+39\alpha s^2+6s^3}{25\alpha^3+35\alpha^2 s+15\alpha s^2+2s^3},\;\;\vspace{0.2 cm}\\ \displaystyle
d_{43}=\frac{13\alpha^3+45\alpha^2 s+33\alpha s^2+6s^3}{25\alpha^3+35\alpha^2 s+15\alpha s^2+2s^3},\;\;
d_{44}=-\frac{3\alpha^3+11\alpha^2 s+9\alpha s^2+2s^3}{25\alpha^3+35\alpha^2 s+15\alpha s^2+2s^3}.
\end{array}
$$

The recursion formulas of coefficient $\mu_{4,\ell}^{(\alpha,s)}\;(\ell=0,1,\ldots)$ are shown below,
$$
\begin{array}{lll}
\displaystyle\mu_{4,0}^{(\alpha,s)}&=&\displaystyle\left(\frac{25\alpha^3+35\alpha^2 s+15\alpha s^2+2s^3}{12\alpha^3}\right)^{\alpha},\;\vspace{0.2 cm}\\
\displaystyle\mu_{4,1}^{(\alpha,s)}&=&\displaystyle-\frac{2\alpha\left(24\alpha^3+52\alpha^2s+27\alpha s^2+4s^3\right)}{25\alpha^3+35\alpha^2 s+15\alpha s^2+2s^3}\mu_{4,0}^{(\alpha,s)},\;\vspace{0.2 cm}\\
\displaystyle\mu_{4,2}^{(\alpha,s)}&=&\displaystyle\frac{2\alpha}
{(25\alpha^3+35\alpha^2 s+15\alpha s^2+2s^3)^2}\left.(576\alpha^7+2496\alpha^6 s+4000\alpha^5s^2\right.\vspace{0.2 cm}\\
&&\displaystyle+3000\alpha^4s^3 +1145\alpha^3 s^4+216\alpha^2 s^5+16\alpha s^6
-126\alpha^6-441\alpha^5 s\vspace{0.2 cm}\\&&\displaystyle\left.-835\alpha^4 s^2-699\alpha^3s^3-281\alpha^2s^4-54\alpha s^5-4s^6
\right.)\mu_{4,0}^{(\alpha,s)},\vspace{0.2 cm}\\
\displaystyle\mu_{4,3}^{(\alpha,s)}&=&\displaystyle-\frac{2\alpha}
{(25\alpha^3+35\alpha^2 s+15\alpha s^2+2s^3)^3}\left.(
27648\alpha^{11}+179712\alpha^{10}s\right.\vspace{0.2 cm}\\
 &&\displaystyle+482688\alpha^9s^2+699392\alpha^8s^3+602928\alpha^7s^4+323448\alpha^6s^5
 \vspace{0.2 cm}\\&&\displaystyle
 +109062\alpha^5s^6+22488\alpha^4s^7+2592\alpha^3s^8+128\alpha^2s^9
  -18144\alpha^{10} \vspace{0.2 cm}\\&&\displaystyle-102816\alpha^9s-278244\alpha^8s^2-43564\alpha^7s^3-404406\alpha^6s^4  \vspace{0.2 cm}\\
 &&\displaystyle -228726\alpha^5s^5-79722\alpha^4s^6-16740\alpha^3s^7-1944\alpha^2s^8-96\alpha s^9 \vspace{0.2 cm}\\
 &&\displaystyle +5496\alpha^9+17604\alpha^8s+29331\alpha^7s^2+47500\alpha^6s^3+52263\alpha^5s^4 \vspace{0.2 cm}\\
 &&\displaystyle\left.+33528\alpha^4s^5+12591\alpha^3s^6+2748\alpha^2s^7+324\alpha^8+16s^9\right)\mu_{4,0}^{(\alpha,s)},
\end{array}
$$
$$\begin{array}{lll}
\displaystyle\mu_{4,\ell}^{(\alpha,s)}&=\displaystyle\frac{1}
{(25\alpha^3+35\alpha^2 s+15\alpha s^2+2s^3)\ell}\left[-2(24\alpha^3+52\alpha^2s+27\alpha s^2+4s^3)\right.
 \vspace{0.2 cm}\\&\displaystyle\times(\alpha-\ell+1)\mu_{4,\ell-1}^{(\alpha,s)}
+6(6\alpha^3+19\alpha^2s+12\alpha s^2+2s^3)(2\alpha-\ell+2)\mu_{4,\ell-2}^{(\alpha,s)}\vspace{0.2 cm}\\&\displaystyle
-2(8\alpha^3+28\alpha^2s+21\alpha s^2+4s^3)(3\alpha-\ell+3)\mu_{4,\ell-3}^{(\alpha,s)}
\vspace{0.2 cm}\\&\displaystyle\left.+(3\alpha^3+11\alpha^2s+9\alpha s^2+2s^3)(4\alpha-\ell+4)\mu_{4,\ell-4}^{(\alpha,s)}
\right],\;\ell\geq4.
\end{array}
$$

{\it{\bf Remark 3:}  It is a remarkable finding that Lublich's method \cite{Lubich} and
the modified high-order numerical algorithm
 formulas in \cite{Ding4} are the special cases
of Theorem \ref{Th3.1} for $s=0$ and $s=-1$, respectively. }

\subsection{The fractional-compact forms of the generalized numerical algorithms}\label{sec:3.1}

Firstly, the asymptotic expansion formulas of operators
$\,^{L}\mathcal{B}_{p,s}^{\alpha}$ and $\,^{R}\mathcal{B}_{p,s}^{\alpha}$ are listed as follows,
 which are the foundations for the establishment of the fractional-compact forms of the generalized numerical algorithms.

\begin{theorem} Let $u(x)\in C^{[\alpha]+n+1}(\mathds{R})$ and all the derivatives of $u(x)$ up to
order $[\alpha]+n+2$ belong to
$L_1(\mathds{R})$.
Then for any $s\in \mathds{R}$ and $p\in \mathds{N}$, one has
\begin{equation*}
\displaystyle\,^{L}\mathcal{B}_{p,s}^{\alpha}u(x)
 =\,_{RL}D_{-\infty,x}^{\alpha}u(x)+\sum\limits_{\ell=p}^{n-1}\left(\varrho_{\ell}^{(\alpha,s)}
 \,_{RL}D_{-\infty,x}^{\alpha+\ell}u(x)\right)h^{\ell}
+\mathcal{O}(h^n),\;n\geq p+1,
\end{equation*}
and
\begin{equation*}
\displaystyle\,^{R}\mathcal{B}_{p,s}^{\alpha}u(x)
 =\,_{RL}D_{x,+\infty}^{\alpha}u(x)+\sum\limits_{\ell=p}^{n-1}\left(\varrho_{\ell}^{(\alpha,s)}
 \,_{RL}D_{x,+\infty}^{\alpha+\ell}u(x)\right)h^{\ell}
+\mathcal{O}(h^n),\;n\geq p+1,
\end{equation*}
 hold uniformly on $\mathds{R}$. Here the coefficients $\varrho_{\ell}^{(\alpha,s)}\;(\ell=1,2,\ldots)$
 can be determined by the following equation
$$\displaystyle
\frac{\mathrm{e}^{-sz}}{z^\alpha}G_{p,s}(\mathrm{e}^{-z})=1+\sum\limits_{\ell=p}^{\infty}\varrho_{\ell}^{(\alpha,s)}z^\ell,\;|z|<1.
$$
\end{theorem}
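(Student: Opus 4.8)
The plan is to pass to the Fourier domain, where the difference operator $\,^{L}\mathcal{B}_{p,s}^{\alpha}$ acts as a Fourier multiplier whose symbol is dictated precisely by the generating function $G_{p,s}$. Writing $\widehat{u}(\omega)=\int_{\mathds{R}}\mathrm{e}^{-\mathrm{i}\omega x}u(x)\,\mathrm{d}x$ and applying the translation rule $\widehat{u(\cdot-a)}(\omega)=\mathrm{e}^{-\mathrm{i}\omega a}\widehat{u}(\omega)$ to each shifted term, then resumming via $G_{p,s}(z)=\sum_{\ell=0}^{\infty}\mu_{p,\ell}^{(\alpha,s)}z^{\ell}$ evaluated at $z=\mathrm{e}^{-\mathrm{i}\omega h}$ (the decay of $\mu_{p,\ell}^{(\alpha,s)}$ makes this boundary evaluation legitimate), I would obtain
$$\widehat{\,^{L}\mathcal{B}_{p,s}^{\alpha}u}(\omega)=\frac{\mathrm{e}^{-\mathrm{i}\omega sh}}{h^{\alpha}}\,G_{p,s}\!\left(\mathrm{e}^{-\mathrm{i}\omega h}\right)\widehat{u}(\omega).$$

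Next I would feed in the defining relation for the coefficients $\varrho_{\ell}^{(\alpha,s)}$ evaluated at $z=\mathrm{i}\omega h$: from $\frac{\mathrm{e}^{-sz}}{z^{\alpha}}G_{p,s}(\mathrm{e}^{-z})=1+\sum_{\ell=p}^{\infty}\varrho_{\ell}^{(\alpha,s)}z^{\ell}$, multiplying through by $(\mathrm{i}\omega)^{\alpha}$ yields
$$\frac{\mathrm{e}^{-\mathrm{i}\omega sh}}{h^{\alpha}}\,G_{p,s}\!\left(\mathrm{e}^{-\mathrm{i}\omega h}\right)=(\mathrm{i}\omega)^{\alpha}+\sum_{\ell=p}^{\infty}\varrho_{\ell}^{(\alpha,s)}(\mathrm{i}\omega)^{\alpha+\ell}h^{\ell}.$$
Since $(\mathrm{i}\omega)^{\alpha+\ell}\widehat{u}(\omega)$ is exactly the Fourier transform of $\,_{RL}D_{-\infty,x}^{\alpha+\ell}u(x)$ — here I use $u\in C^{[\alpha]+n+1}(\mathds{R})$ together with the $L_1$ membership of the derivatives up to order $[\alpha]+n+2$, so that every symbol is meaningful and the inverse transforms exist — I would truncate at $\ell=n-1$, collect the tail into $R_n(\omega,h)=\sum_{\ell=n}^{\infty}\varrho_{\ell}^{(\alpha,s)}(\mathrm{i}\omega)^{\alpha+\ell}h^{\ell}$, and invert. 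This reproduces the claimed expansion with error $\frac{1}{2\pi}\int_{\mathds{R}}\mathrm{e}^{\mathrm{i}\omega x}R_n(\omega,h)\widehat{u}(\omega)\,\mathrm{d}\omega$.

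The hard part will be showing this error is $\mathcal{O}(h^n)$ \emph{uniformly} in $x\in\mathds{R}$, which I would do by splitting the frequency axis at $|\omega|=1/h$. On $|\omega|\le 1/h$, the map $z\mapsto \mathrm{e}^{-sz}z^{-\alpha}G_{p,s}(\mathrm{e}^{-z})$ is analytic near $z=0$ (because $G_{p,s}(\mathrm{e}^{-w})\sim w^{\alpha}$ as $w\to0$, so the $z^{-\alpha}$ factor is cancelled and the series $1+\sum_{\ell\ge p}\varrho_{\ell}^{(\alpha,s)}z^{\ell}$ converges), whence the Taylor remainder gives $|R_n(\omega,h)|\le C\,|\omega|^{\alpha+n}h^{n}$; the resulting contribution is bounded by $C h^{n}\int_{\mathds{R}}|\omega|^{\alpha+n}|\widehat{u}(\omega)|\,\mathrm{d}\omega$, and this integral is finite because the decay $|\widehat u(\omega)|\le C|\omega|^{-([\alpha]+n+2)}$ forced by the integrability hypothesis makes $|\omega|^{\alpha+n}\widehat u\in L_1(\mathds{R})$. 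On $|\omega|>1/h$ the multiplier is bounded crudely by $C h^{-\alpha}$ (the polynomial $(1-z)$-factor of $G_{p,s}$ stays bounded on $|z|=1$), and the same rapid decay of $\widehat u$ makes $\int_{|\omega|>1/h}|\omega|^{-([\alpha]+n+2)}\,\mathrm{d}\omega=\mathcal{O}(h^{[\alpha]+n+1})$, so this piece is $\mathcal{O}(h^{[\alpha]+n+1-\alpha})=\mathcal{O}(h^{n+2-\alpha})=\mathcal{O}(h^n)$; the truncated polynomial terms are handled identically. As every bound is independent of $x$, uniformity follows.

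Finally, the right-sided statement is the mirror image: the shifts $x+(\ell+s)h$ produce the symbol $\mathrm{e}^{\mathrm{i}\omega sh}h^{-\alpha}G_{p,s}(\mathrm{e}^{\mathrm{i}\omega h})$, and substituting $z=-\mathrm{i}\omega h$ into the same defining relation matches it with $(-\mathrm{i}\omega)^{\alpha}+\sum_{\ell\ge p}\varrho_{\ell}^{(\alpha,s)}(-\mathrm{i}\omega)^{\alpha+\ell}h^{\ell}$, the symbol of $\,_{RL}D_{x,+\infty}^{\alpha}$ and its higher-order analogues. Because $p$ and $s$ enter only through the coefficients $\varrho_{\ell}^{(\alpha,s)}$, the whole argument is structurally identical to the proof of Theorem~\ref{Th2.2} — which is exactly the case $p=2$, $s=-1$ (one checks $G_{2,-1}(z)=W_{2}(z)$ and $\mathrm{e}^{-sz}=\mathrm{e}^{z}$, so $\varrho_{\ell}^{(\alpha,-1)}=\sigma_{\ell}^{(\alpha)}$) — and I expect the final write-up to reduce to invoking that template with $W_2$ replaced by $G_{p,s}$.
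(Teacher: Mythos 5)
Your proposal addresses a theorem the paper never actually proves: it is merely ``listed'' as a foundation, with the special case $p=2$, $s=-1$ (Theorem \ref{Th2.2}) cited from \cite{Ding4} and the proof of the related Theorem \ref{Th3.1} explicitly omitted as ``almost the same method'' as in \cite{Ding4}. The method of that reference is exactly your Fourier-multiplier argument: represent $\,^{L}\mathcal{B}_{p,s}^{\alpha}$ by the symbol $h^{-\alpha}\mathrm{e}^{-\mathrm{i}\omega sh}G_{p,s}(\mathrm{e}^{-\mathrm{i}\omega h})$, match it against $(\mathrm{i}\omega)^{\alpha}\left(1+\sum_{\ell\geq p}\varrho_{\ell}^{(\alpha,s)}(\mathrm{i}\omega h)^{\ell}\right)$ through the defining relation, and absorb the remainder using the decay $|\widehat{u}(\omega)|\leq C(1+|\omega|)^{-([\alpha]+n+2)}$ forced by the $L_1$ hypothesis on the derivatives. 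So in substance you are supplying, along the intended lines, the proof the paper defers to its reference; your consistency check that $G_{2,-1}=W_2$ and $\varrho_{\ell}^{(\alpha,-1)}=\sigma_{\ell}^{(\alpha)}$ is also correct, as is the mirror treatment of the right-sided operator via $z=-\mathrm{i}\omega h$.

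One step needs repair as written. You justify the low-frequency bound $|R_n(\omega,h)|\leq C|\omega|^{\alpha+n}h^{n}$ on all of $|\omega h|\leq 1$ by the Taylor remainder of $\phi(z)=\mathrm{e}^{-sz}z^{-\alpha}G_{p,s}(\mathrm{e}^{-z})$ at $z=0$. That is legitimate only inside the disk of convergence of $1+\sum_{\ell\geq p}\varrho_{\ell}^{(\alpha,s)}z^{\ell}$, and since the theorem allows arbitrary $s\in\mathds{R}$, this disk can be much smaller than $|z|\leq1$: writing $G_{p,s}=Q^{\alpha}$ with $Q(w)=(1-w)B(w)$, the factor $B$ acquires a root approaching $w=1$ as $s\to+\infty$ (already for $p=2$ one has $B(w)=1+\frac{\alpha+2s}{2\alpha}(1-w)$, vanishing at $w=1+\frac{2\alpha}{\alpha+2s}$), which creates a branch point of $\phi$ at $z=-\ln\left(1+\frac{2\alpha}{\alpha+2s}\right)$, arbitrarily close to the origin. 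The repair is cheap and standard: note that on the imaginary axis $\phi(\mathrm{i}x)$ is always bounded for $|x|\geq r_0$ (since $|\mathrm{e}^{-s\mathrm{i}x}|=1$ and $|Q(\mathrm{e}^{-\mathrm{i}x})|^{\alpha}$ is bounded on the unit circle), while the truncated polynomial grows only like $|x|^{n-1}$; combined with the Taylor estimate valid for $|x|\leq r_0$, this gives the single global bound $\left|\phi(\mathrm{i}x)-1-\sum_{\ell=p}^{n-1}\varrho_{\ell}^{(\alpha,s)}(\mathrm{i}x)^{\ell}\right|\leq C|x|^{n}$ for all real $x$, after which no frequency splitting is needed at all: the error is at most $Ch^{n}\int_{\mathds{R}}|\omega|^{\alpha+n}|\widehat{u}(\omega)|\,\mathrm{d}\omega=\mathcal{O}(h^{n})$ uniformly in $x$. (Relatedly, on high frequencies the error must be written as symbol minus truncated polynomial, not as the tail series $R_n$, which need not converge there; your crude-bound treatment implicitly does this, but it should be stated.)
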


Define a generalized fractional
difference operator $\mathcal{J}_{p,s}$ as
$$
\begin{array}{lll}
\mathcal{J}_{p,s}u(x)=\left(1+\varrho_{p}^{(\alpha,s)}h^p\delta_x^p\right)u(x),
\end{array}
$$
where the difference operator $\delta_x^p$ is
defined by
 $$\delta_x^pu(x)=\frac{1}{h^p}\sum_{m=0}^{p}(-1)^m\left(p\atop m
\right)u\left(x+\left(\frac{p}{2}-m\right)h\right),\;\;p\in \mathds{N}.$$

Here, note that the facts
$$
\begin{array}{lll}
\displaystyle\,_{RL}D_{-\infty,x}^{\alpha+p}u(x)
=\frac{\textrm{d}^p}{\textrm{d}x^p}\left(\,_{RL}D_{-\infty,x}^{\alpha}u(x)\right),
\end{array}
$$
$$
\begin{array}{lll}
\displaystyle\,_{RL}D_{x,+\infty}^{\alpha+p}u(x)
=\frac{\textrm{d}^p}{\textrm{d}x^p}\left(\,_{RL}D_{x,+\infty}^{\alpha}u(x)\right),
\end{array}
$$
and
$$
\begin{array}{lll}
\displaystyle\frac{\textrm{d}^p u(x)}{\textrm{d}x^p}=\delta_x^pu(x)+\mathcal{O}(h^p),
\end{array}
$$
then the generalized fractional-compact numerical algorithm formulas for
 Riemann-Liouville (Riesz) derivatives are stated below.

\begin{theorem} Let $u(x)\in C^{[\alpha]+p+1}(\mathds{R})$ and all the derivatives of $u(x)$
up to order $[\alpha]+p+2$ belong to
$L_1(\mathds{R})$. Then there hold
$$
\begin{array}{lll}
\displaystyle\,^{L}\mathcal{B}_{p,s}^{\alpha}u(x)
 =\mathcal{J}_{p,s}\,_{RL}D_{-\infty,x}^{\alpha}u(x)
+\mathcal{O}(h^{p+1}),
\end{array}
$$
and
\begin{equation*}
\displaystyle\,^{R}\mathcal{B}_{p,s}^{\alpha}u(x)
 =\mathcal{J}_{p,s}\,_{RL}D_{x,+\infty}^{\alpha}u(x)
+\mathcal{O}(h^{p+1}),
\end{equation*}
uniformly for $x,s\in\mathds{R}$ and $p\in \mathds{N}$.

Furthermore, one has
\begin{eqnarray}\label{eq49}
\displaystyle \mathcal{J}_{p,s}\frac{\partial^\alpha
u(x)}{\partial{|x|^\alpha}}\left|_{x=x_{j}+sh}\right. =\displaystyle
 -\frac{1}{2\cos\left(\frac{\pi\alpha}{2}\right)}\left[\,^{R}\mathcal{B}_{p,s}^{\alpha}+\,^{L}\mathcal{B}_{p,s}^{\alpha}
\right]u(x_j+sh)\displaystyle+\mathcal {O}(h^{p+1}).\nonumber
\end{eqnarray}
\end{theorem}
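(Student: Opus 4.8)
The plan is to deduce this theorem directly from the preceding asymptotic expansion theorem by truncating the expansion at its single leading correction term and then replacing the fractional derivative $\,_{RL}D_{-\infty,x}^{\alpha+p}u$ by the symmetric difference $\delta_x^p$ acting on $\,_{RL}D_{-\infty,x}^{\alpha}u$, exactly in the spirit of the proof of Theorem \ref{Th2.3}. First I would set $n=p+1$ in the expansion theorem. Since the sum there runs from $\ell=p$ to $n-1=p$, only the term $\ell=p$ survives, giving, uniformly on $\mathds{R}$ under the stated regularity of $u$,
$$\,^{L}\mathcal{B}_{p,s}^{\alpha}u(x) = \,_{RL}D_{-\infty,x}^{\alpha}u(x) + \varrho_{p}^{(\alpha,s)}\,_{RL}D_{-\infty,x}^{\alpha+p}u(x)\,h^{p} + \mathcal{O}(h^{p+1}).$$

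Next I would invoke the composition identity $\,_{RL}D_{-\infty,x}^{\alpha+p}u(x)=\frac{\mathrm{d}^{p}}{\mathrm{d}x^{p}}\big(\,_{RL}D_{-\infty,x}^{\alpha}u(x)\big)$ recorded immediately before the statement, together with the central-difference fact $\frac{\mathrm{d}^{p}}{\mathrm{d}x^{p}}g=\delta_x^{p}g+\mathcal{O}(h^{p})$ applied to $g=\,_{RL}D_{-\infty,x}^{\alpha}u$. Substituting yields
$$\varrho_{p}^{(\alpha,s)}h^{p}\,_{RL}D_{-\infty,x}^{\alpha+p}u(x) = \varrho_{p}^{(\alpha,s)}h^{p}\,\delta_x^{p}\big(\,_{RL}D_{-\infty,x}^{\alpha}u(x)\big) + \mathcal{O}(h^{2p}),$$
and since $2p\geq p+1$ for every $p\geq1$, the trailing term is absorbed into $\mathcal{O}(h^{p+1})$ (in fact only first-order accuracy of $\delta_x^p$ is needed). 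Collecting the identity term and the correction under $\mathcal{J}_{p,s}=1+\varrho_{p}^{(\alpha,s)}h^{p}\delta_x^{p}$ gives
$$\,^{L}\mathcal{B}_{p,s}^{\alpha}u(x) = \mathcal{J}_{p,s}\,_{RL}D_{-\infty,x}^{\alpha}u(x) + \mathcal{O}(h^{p+1}).$$
The right-operator estimate is obtained verbatim using the second expansion of the preceding theorem with $\,_{RL}D_{-\infty,x}$ replaced by $\,_{RL}D_{x,+\infty}$. Finally, substituting both estimates into the defining relation (\ref{eq1}) of the Riesz derivative and applying $\mathcal{J}_{p,s}$ (which commutes with the constant factor $-\frac{1}{2\cos(\pi\alpha/2)}$ and with evaluation at $x=x_j+sh$) produces the Riesz formula (\ref{eq49}).

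The only point requiring genuine care, and the main obstacle, is the \emph{uniform} validity of the central-difference replacement: I must ensure that $g=\,_{RL}D_{-\infty,x}^{\alpha}u$ is smooth enough that $\delta_x^{p}g=\frac{\mathrm{d}^{p}}{\mathrm{d}x^{p}}g+\mathcal{O}(h^{p})$ holds uniformly on all of $\mathds{R}$ rather than merely pointwise. This is precisely where the hypotheses $u\in C^{[\alpha]+p+1}(\mathds{R})$ together with the $L_1$-integrability of the derivatives of $u$ up to order $[\alpha]+p+2$ enter: they guarantee that $g$ carries enough bounded, continuous derivatives (equivalently, that the higher Riemann--Liouville derivative $\,_{RL}D_{-\infty,x}^{\alpha+p+2}u$ is controlled) to justify a Taylor expansion of $\delta_x^{p}g$ with a uniform remainder. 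Apart from this regularity bookkeeping, everything mirrors Theorem \ref{Th2.3}, so I would write the left-operator case in full and simply indicate that the right-operator and Riesz cases follow by the identical argument.
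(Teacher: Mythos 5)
Your proposal is correct and follows essentially the same route as the paper: the paper gives no separate proof of this theorem but instead records, immediately before its statement, exactly the three ingredients you use (the truncated expansion from the preceding theorem, the composition identity $\,_{RL}D_{-\infty,x}^{\alpha+p}u=\frac{\textrm{d}^p}{\textrm{d}x^p}\,_{RL}D_{-\infty,x}^{\alpha}u$, and the central-difference fact for $\delta_x^p$), the assembly being the same as in the explicit proof of Theorem \ref{Th2.3}. Your observation that only first-order accuracy of $\delta_x^p$ is needed, since the correction term already carries the factor $h^p$, is a correct and slightly more careful version of the paper's implicit argument.
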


{\it{\bf Remark 4:}  The operators $\mathcal{L}$, $\mathcal{\widetilde{L}}$ and $\mathcal{J}_{p,s}$ have the
relations, $\mathcal{J}_{2,-1}=\mathcal{L}$ and $\mathcal{J}_{2,1}=\mathcal{\widetilde{L}}$.}

{\it{\bf Remark 5:}.
It is to be observed that some kinds of $(p+2)$th-order fractional-compact numerical approximation formulas can be obtained by
linear combination of any two different $(p+1)$th-order fractional-compact schemes, where $p\geq2$. Here, we list them as follows.

Suppose $u(x)\in C^{[\alpha]+p+1}(\mathds{R})$ and all the derivatives of $u(x)$ up to order $[\alpha]+p+2$ belong to
$L_1(\mathds{R})$. Define the fractional-compact operator as
$$
\begin{array}{lll}
\displaystyle
\mathcal{{H}}_{p,s_1,s_2}u(x)=\left[\left({\varrho}_{p+1}^{(\alpha,s_2)}-{\varrho}_{p+1}^{(\alpha,s_1)}\right)
+h^p\left({\varrho}_{p}^{(\alpha,s_1)}{\varrho}_{p+1}^{(\alpha,s_2)}
-{{\varrho}}_{p}^{(\alpha,s_2)}{\varrho}_{p+1}^{(\alpha,s_1)}\right)\delta_x^2\right]u(x),
\end{array}
$$
then one has
$$
\begin{array}{lll}
\displaystyle{{\varrho}}_{p+1}^{(\alpha,s_2)}\,^{L}\mathcal{B}_{p,s_1}^{\alpha}u(x)
-{{\varrho}}_{p+1}^{(\alpha,s_1)}\,^{L}\mathcal{{B}}_{p,s_2}^{\alpha}u(x)
 =\mathcal{{H}}_{p,s_1,s_2}\,_{RL}D_{-\infty,x}^{\alpha}u(x)
+\mathcal{O}(h^{p+2}),
\end{array}
$$
and
$$
\begin{array}{lll}
\displaystyle{{\varrho}}_{p+1}^{(\alpha,s_2)}\,^{R}\mathcal{B}_{p,s_1}^{\alpha}u(x)
-{{\varrho}}_{p+1}^{(\alpha,s_1)}\,^{R}\mathcal{{B}}_{p,s_2}^{\alpha}u(x)
 =\mathcal{{H}}_{p,s_1,s_2}\,_{RL}D_{x,+\infty,}^{\alpha}u(x)
+\mathcal{O}(h^{p+2}),
\end{array}
$$
 hold uniformly on $\mathds{R}$.
Furthermore, a kind of ($p+2$)th-order fractional-compact numerical approximation formula for Riesz derivative is given by
\begin{eqnarray*}
\displaystyle \mathcal{H}_{p,s_1,s_2}\frac{\partial^\alpha
u(x)}{\partial{|x|^\alpha}} &=&\displaystyle -\frac{1}{2\cos\left(\frac{\pi\alpha}{2}\right)}\left[{{\varrho}}_{p+1}^{(\alpha,s_2)}\left(
\,^{L}\mathcal{B}_{p,s_1}^{\alpha}+\,^{R}\mathcal{B}_{p,s_1}^{\alpha}\right)\right.\vspace{0.2 cm}\\&&\left.
-{{\varrho}}_{p+1}^{(\alpha,s_1)}\left(
\,^{L}\mathcal{B}_{p,s_2}^{\alpha}+\,^{R}\mathcal{B}_{p,s_2}^{\alpha}\right)
\right]u(x)\displaystyle+\mathcal {O}(h^{p+2}).\nonumber
\end{eqnarray*}

Particularly, if we choose $p = 2$, then the following several commonly four-order fractional-compact schemes can be obtained,
\begin{eqnarray}\label{eq9}
\displaystyle \mathcal{H}_{2,-1,1}\frac{\partial^\alpha
u(x)}{\partial{|x|^\alpha}} &=&\displaystyle -\frac{1}{2\cos\left(\frac{\pi\alpha}{2}\right)}\left[{{\varrho}}_{3}^{(\alpha,1)}\left(
\,^{L}\mathcal{B}_{2,-1}^{\alpha}+\,^{R}\mathcal{B}_{2,-1}^{\alpha}\right)\right.\vspace{0.2 cm}\\&&\left.
-{{\varrho}}_{3}^{(\alpha,-1)}\left(
\,^{L}\mathcal{B}_{2,1}^{\alpha}+\,^{R}\mathcal{B}_{2,1}^{\alpha}\right)
\right]u(x)\displaystyle+\mathcal {O}(h^{4}),\nonumber
\end{eqnarray}
\begin{eqnarray*}
\displaystyle \mathcal{H}_{2,0,1}\frac{\partial^\alpha
u(x)}{\partial{|x|^\alpha}} &=&\displaystyle -\frac{1}{2\cos\left(\frac{\pi\alpha}{2}\right)}\left[{{\varrho}}_{3}^{(\alpha,1)}\left(
\,^{L}\mathcal{B}_{2,0}^{\alpha}+\,^{R}\mathcal{B}_{2,0}^{\alpha}\right)\right.\vspace{0.2 cm}\\&&\left.
-{{\varrho}}_{3}^{(\alpha,0)}\left(
\,^{L}\mathcal{B}_{2,1}^{\alpha}+\,^{R}\mathcal{B}_{2,1}^{\alpha}\right)
\right]u(x)\displaystyle+\mathcal {O}(h^{4}),\nonumber
\end{eqnarray*}
and
\begin{eqnarray*}
\displaystyle \mathcal{H}_{2,0,-1}\frac{\partial^\alpha
u(x)}{\partial{|x|^\alpha}} &=&\displaystyle -\frac{1}{2\cos\left(\frac{\pi\alpha}{2}\right)}\left[{{\varrho}}_{3}^{(\alpha,-1)}\left(
\,^{L}\mathcal{B}_{2,0}^{\alpha}+\,^{R}\mathcal{B}_{2,0}^{\alpha}\right)\right.\vspace{0.2 cm}\\&&\left.
-{{\varrho}}_{3}^{(\alpha,0)}\left(
\,^{L}\mathcal{B}_{2,-1}^{\alpha}+\,^{R}\mathcal{B}_{2,-1}^{\alpha}\right)
\right]u(x)\displaystyle+\mathcal {O}(h^{4}),\nonumber
\end{eqnarray*}
where,
$$
\begin{array}{lll} \displaystyle
\varrho_2^{(\alpha,s)}=-\frac{2\alpha^2+6\alpha s+3s^2}{6\alpha},
\end{array}
$$
$$
\begin{array}{lll} \displaystyle
\varrho_3^{(\alpha,s)}=\frac{3\alpha^3+11\alpha^2 s+12s^2\alpha+4s^3}{12\alpha^2},\;s=-1,0,1.
\end{array}
$$}

\section{Application to Riesz spatial fractional reaction-dispersion equation in one space dimension}\label{sec:4}
 Here, we consider the one-dimension Riesz spatial fractional reaction-dispersion equation in the following form,
\begin{eqnarray}\label{eq10}
  \displaystyle\frac{\partial{{}u(x,t)}}{\partial{t}}
 =-u(x,t)+K_\alpha\frac{\partial^\alpha
u(x,t)}{\partial{|x|^\alpha}}
  +f(x,t),\;1<\alpha<2,\\\hspace{6 cm}\;\;\;0<x<L,\;\;\;0< t\leq T,\nonumber
\end{eqnarray}
  with initial value condition
\begin{equation}\label{eq11}
u(x,0)=u^{0}(x),\;\;0\leq x\leq L,
\end{equation}
and boundary value conditions
\begin{equation}\label{eq12}
u(0,t)=
u(L,t)=0,\;\;0< t\leq T,
\end{equation}
in which parameter $K_\alpha$ is a positive real constant, $u^{0}(x)$ and $f(x,t)$ are given suitably smooth functions.

\subsection{Derivation of the fractional-compact difference scheme}

Let temporal steplength $\tau=\frac{T}{N}$ and spatial steplength $h=\frac{L}{M}$,
where $M$ and $N$ are two positive
integers. Define a partition of $ [0,T]\times[0,L]$ by
$\Omega=\Omega_\tau\times\Omega_h$ with girds
$\Omega_\tau=\{t_k=k\tau\,|\,0\leq k\leq N\}$ and
$\Omega_h=\{x_j=jh\,|\,0\leq j\leq M\}$. For any gird function $u_j^k\in\Omega$, denote
$$
\begin{array}{lll} \displaystyle
 \delta_t u^{k+\frac{1}{2}}_j=\frac{u^{k+1}_j-u^{k}_j}{\tau},\;\; u^{k+\frac{1}{2}}_j=\frac{u^{k+1}_j+u^{k}_j}{2}.
\end{array}
$$
And set
$$
\begin{array}{lll}
\displaystyle \delta_x^\alpha = -\frac{1}{2\cos\left(\frac{\pi\alpha}{2}\right)}\left(
\,^{L}\mathcal{A}_{2}^{\alpha}+\,^{R}\mathcal{A}_{2}^{\alpha}\right),\;\;1<\alpha<2.
\end{array}
$$
Now we consider equation (\ref{eq10}) at the point $(x_j, t)$. Then we have
\begin{equation}\label{eq13}
 \frac{\partial{{}u(x_j,t)}}{\partial{t}}
 =-u(x_j,t)+K_\alpha\frac{\partial^\alpha
u(x_j,t)}{\partial{|x|^\alpha}}
  +f(x_j,t),\;\;\;0\leq j\leq M.
\end{equation}
Operating operator $\mathcal{L}$ on both sides of (\ref{eq13}) yields
$$
\begin{array}{lll} \displaystyle
 \mathcal{L}\frac{\partial{{}u(x_j,t)}}{\partial{t}}
 =-\mathcal{L}u(x_j,t)+K_\alpha\mathcal{L}\frac{\partial^\alpha
u(x_j,t)}{\partial{|x|^\alpha}}
  +\mathcal{L}f(x_j,t),\;\;\;0\leq j\leq M.
\end{array}
$$
Noticing the definition of operator $\mathcal{L}$ and equation (\ref{eq7}) gives
$$
\begin{array}{lll} \displaystyle
 \mathcal{L}\frac{\partial{{}u(x_j,t)}}{\partial{t}}
 =-\mathcal{L}u(x_j,t)+K_\alpha\delta_x^\alpha
u(x_j,t)
  +\mathcal{L}f(x_j,t)+\mathcal {O}(h^3),\;\;\;0\leq j\leq M.
\end{array}
$$
Taking $t=t_{k+\frac{1}{2}}$ and using the Taylor expansion, one has
\begin{eqnarray}\label{eq14}
 \mathcal{L}\delta_t u(x_j,t_{k+\frac{1}{2}})
 =-\mathcal{L}u(x_j,t_{k+\frac{1}{2}})+K_\alpha\delta_x^\alpha
u(x_j,t_{k+\frac{1}{2}})\vspace{0.2 cm}\displaystyle\\
  +\mathcal{L}f(x_j,t_{k+\frac{1}{2}})+R_j^k,\;\;\;0\leq j\leq M.\nonumber
\end{eqnarray}
where there exists a positive constant $C_1$ such that
$$
\begin{array}{lll} \displaystyle
|R_j^k|\leq C_1(\tau^2+h^3),\;0\leq k\leq N-1,\;1\leq j\leq M-1.
\end{array}
$$

Omitting the high-order terms $R_j^k$ of (\ref{eq14}) and letting $u_j^k$ be the numerical approximation
of function $u(x_j,t_k)$, then we can obtain the following fractional-compact difference scheme
for equation (\ref{eq10}), together with initial and boundary value conditions (\ref{eq11}) and (\ref{eq12}) as follows,
\begin{eqnarray}\label{eq15}
 \mathcal{L}\delta_t u_j^{k+\frac{1}{2}}
 =-\mathcal{L}u_j^{k+\frac{1}{2}}+K_\alpha\delta_x^\alpha
u_j^{k+\frac{1}{2}}
  +\mathcal{L}f_j^{k+\frac{1}{2}},\\\;\;\;0\leq k\leq N-1,\;\;1\leq j\leq M-1,\nonumber
\end{eqnarray}
\begin{equation}\label{eq16}
u_j^0=u^{0}(x_j),\;\;0\leq j\leq M,
\end{equation}
\begin{equation}\label{eq17}
u_0^k=
u_M^k=0,\;\;1\leq k\leq N.
\end{equation}

\subsection{Analysis of the fractional-compact difference scheme}
Let
$$
\begin{array}{lll} \displaystyle
\mathcal{V}_h=\{\bm{v}|\bm{v}=(v_0,v_1,\ldots,v_M),v_0=v_M=0\}
\end{array}
$$
be the space of grid
functions. Then for any $\bm{u},\bm{v}\in\mathcal{V}_h$, we can define the discrete inner products below,
$$
\begin{array}{lll} \displaystyle
(\bm{u},\bm{v})=h\sum_{j=1}^{M-1}u_jv_j,\;\;(\delta_x\bm{u},\delta_x\bm{v})=h\sum_{j=1}^{M}\left(\delta_x u_{j-\frac{1}{2}}\right)\left(\delta_x v_{j-\frac{1}{2}}\right),
\end{array}
$$
and associated norms below,
$$
\begin{array}{lll} \displaystyle
||\bm{u}||^2=(\bm{u},\bm{u}),\;\;||\delta_x\bm{u}||^2=(\delta_x\bm{u},\delta_x\bm{u}).
\end{array}
$$

Next,
we list several lemmas which will be used later on.

\begin{lemma}\label{Le4.1}
Operator $\mathcal{L}$ is self-adjoint, that is, for any $\bm{u},\bm{v}\in \mathcal{V}_h$, there holds,
$$
\begin{array}{lll} \displaystyle
(\mathcal{L}\bm{u},\bm{v})=(\bm{u},\mathcal{L}\bm{v}).
\end{array}
$$
\end{lemma}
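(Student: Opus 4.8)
The plan is to use the explicit form $\mathcal{L} = I + \sigma_2^{(\alpha)} h^2 \delta_x^2$ and reduce the self-adjointness of $\mathcal{L}$ to that of the central difference operator $\delta_x^2$. Since the identity operator is trivially self-adjoint with respect to $(\cdot,\cdot)$ and $\sigma_2^{(\alpha)} h^2$ is just a scalar prefactor, it suffices to prove
$$
(\delta_x^2 \bm{u},\bm{v}) = (\bm{u},\delta_x^2\bm{v}),\qquad \bm{u},\bm{v}\in\mathcal{V}_h.
$$
Once this is established, I would simply expand
$(\mathcal{L}\bm{u},\bm{v}) = (\bm{u},\bm{v}) + \sigma_2^{(\alpha)}h^2(\delta_x^2\bm{u},\bm{v}) = (\bm{u},\bm{v}) + \sigma_2^{(\alpha)}h^2(\bm{u},\delta_x^2\bm{v}) = (\bm{u},\mathcal{L}\bm{v})$, which is the claim.

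To handle $\delta_x^2$, I would write out the left side from the definition and apply discrete summation by parts (Abel summation), the discrete analogue of Green's first identity. The key intermediate identity is
$$
(\delta_x^2\bm{u},\bm{v}) = h\sum_{j=1}^{M-1}\frac{u_{j+1}-2u_j+u_{j-1}}{h^2}\,v_j = -h\sum_{j=1}^{M}\left(\delta_x u_{j-\frac{1}{2}}\right)\left(\delta_x v_{j-\frac{1}{2}}\right) = -(\delta_x\bm{u},\delta_x\bm{v}),
$$
where $\delta_x u_{j-\frac12} = (u_j-u_{j-1})/h$. Deriving this amounts to rewriting $u_{j+1}-2u_j+u_{j-1}$ as a difference of first differences, shifting the summation index, and collecting the boundary contributions at $j=0$ and $j=M$; these terms carry the factors $v_0,v_M$ (and, in the mirror computation, $u_0,u_M$), all of which vanish since $\bm{u},\bm{v}\in\mathcal{V}_h$. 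The conclusion is then immediate: the bilinear form $(\delta_x\bm{u},\delta_x\bm{v})$ is manifestly symmetric under interchange of its arguments, so
$(\delta_x^2\bm{u},\bm{v}) = -(\delta_x\bm{u},\delta_x\bm{v}) = -(\delta_x\bm{v},\delta_x\bm{u}) = (\delta_x^2\bm{v},\bm{u}) = (\bm{u},\delta_x^2\bm{v})$.

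The only delicate point, and the sole place where the membership $\bm{u},\bm{v}\in\mathcal{V}_h$ is genuinely used, is the bookkeeping of the boundary terms during the summation by parts: the homogeneous Dirichlet conditions $u_0=u_M=v_0=v_M=0$ make them disappear, so the symmetric form $-(\delta_x\bm{u},\delta_x\bm{v})$ emerges with no residual boundary data. Beyond this, the argument is routine index manipulation, and I expect no real obstacle; the substance of the lemma is entirely that $\delta_x^2$ is the symmetric discrete Laplacian under Dirichlet conditions, from which self-adjointness of $\mathcal{L}$ follows by linearity.
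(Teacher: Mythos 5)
Your proposal is correct and follows essentially the same route as the paper: the paper's proof also expands $\mathcal{L}=1+\sigma_2^{(\alpha)}h^2\delta_x^2$, passes through the summation-by-parts identity $(\delta_x^2\bm{u},\bm{v})=-(\delta_x\bm{u},\delta_x\bm{v})$, and then reverses it onto $\bm{v}$, using the symmetry of the form $(\delta_x\bm{u},\delta_x\bm{v})$ and the homogeneous boundary values. The only difference is that you spell out the index bookkeeping behind the summation-by-parts step, which the paper invokes without detail.
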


\begin{proof}
It follows from the definition of operator $\mathcal{L}$ that
$$
\begin{array}{lll} \displaystyle
\left(\mathcal{L}\bm{u},\bm{v}\right)&=&\displaystyle
\left(\left(1+\sigma_{2}^{(\alpha)}h^2\delta_x^2\right)\bm{u},\bm{v}\right)=
\left(\bm{u},\bm{v}\right)-\sigma_{2}^{(\alpha)}h^2\left(\delta_x\bm{u},\delta_x\bm{v}\right)\vspace{0.2 cm}\\&=&\displaystyle
\left(\bm{u},\bm{v}\right)+\sigma_{2}^{(\alpha)}h^2\left(\bm{u},\delta_x^2\bm{v}\right)
=\left(\bm{u},\left(1+\sigma_{2}^{(\alpha)}h^2\delta_x^2\right)\bm{v}\right)
\vspace{0.2 cm}\\&=&\displaystyle\left(\bm{u},\mathcal{L}\bm{v}\right).
\end{array}
$$
All this ends the proof.
\end{proof}

\begin{lemma}\label{Le4.2}
 For any $\bm{u}\in\mathcal{V}_h$, there holds that
$$
\begin{array}{lll} \displaystyle
\left(\frac{4\sqrt{6}}{3}-3\right)||\bm{u}||^2\leq(\mathcal{L}\bm{u},\bm{u})\leq||\bm{u}||^2,\;\;1<\alpha<2.
\end{array}
$$
\end{lemma}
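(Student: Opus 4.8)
The plan is to collapse the quadratic form $(\mathcal{L}\bm{u},\bm{u})$ into a scalar expression in $||\bm{u}||^2$ and $||\delta_x\bm{u}||^2$, and then to control the second norm. Repeating the computation inside the proof of Lemma \ref{Le4.1} with $\bm{v}=\bm{u}$ gives
$$(\mathcal{L}\bm{u},\bm{u})=||\bm{u}||^2+\sigma_{2}^{(\alpha)}h^2(\delta_x^2\bm{u},\bm{u})=||\bm{u}||^2-\sigma_{2}^{(\alpha)}h^2||\delta_x\bm{u}||^2,$$
where the discrete summation by parts uses $u_0=u_M=0$. Everything then reduces to the sign and size of $\sigma_{2}^{(\alpha)}$, which I would put in the transparent form $\sigma_{2}^{(\alpha)}=1-\frac{\alpha}{3}-\frac{1}{2\alpha}$.

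For the upper bound, note that $2\alpha^2-6\alpha+3$ has roots $\frac{3\pm\sqrt3}{2}$ and is thus negative on $(1,2)$, so $\sigma_{2}^{(\alpha)}=-\frac{2\alpha^2-6\alpha+3}{6\alpha}>0$ there. Consequently $-\sigma_{2}^{(\alpha)}h^2||\delta_x\bm{u}||^2\le0$, and the estimate $(\mathcal{L}\bm{u},\bm{u})\le||\bm{u}||^2$ is immediate.

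For the lower bound I would first establish the discrete Poincar\'e-type inequality $h^2||\delta_x\bm{u}||^2\le4||\bm{u}||^2$. Writing $\delta_x u_{j-\frac12}=(u_j-u_{j-1})/h$ and using $(u_j-u_{j-1})^2\le2(u_j^2+u_{j-1}^2)$ together with the homogeneous boundary values yields $||\delta_x\bm{u}||^2=\frac{1}{h}\sum_{j=1}^{M}(u_j-u_{j-1})^2\le\frac{4}{h^2}||\bm{u}||^2$. Since $\sigma_{2}^{(\alpha)}>0$, this gives $(\mathcal{L}\bm{u},\bm{u})\ge(1-4\sigma_{2}^{(\alpha)})||\bm{u}||^2$.

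The final — and the only genuinely delicate — step is to show $1-4\sigma_{2}^{(\alpha)}\ge\frac{4\sqrt6}{3}-3$ uniformly in $\alpha$. Using the rewritten form, $1-4\sigma_{2}^{(\alpha)}=-3+\frac{4\alpha}{3}+\frac{2}{\alpha}=:g(\alpha)$. Then $g'(\alpha)=\frac43-\frac{2}{\alpha^2}$ vanishes at $\alpha=\sqrt{6}/2\in(1,2)$ and $g''(\alpha)=\frac{4}{\alpha^3}>0$, so $g$ attains its minimum there, and a short computation gives $g(\sqrt6/2)=\frac{4\sqrt6}{3}-3$. Hence $(\mathcal{L}\bm{u},\bm{u})\ge(\frac{4\sqrt6}{3}-3)||\bm{u}||^2$, which completes the proof. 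The main obstacle is recognizing that the sharp constant in the statement is exactly the minimum of $g$ over the admissible range $(1,2)$; the discrete inequality and the algebraic rewriting of $\sigma_{2}^{(\alpha)}$ are routine once that optimization is identified.
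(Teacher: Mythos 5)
Your proof is correct and follows essentially the same route as the paper: the identity $(\mathcal{L}\bm{u},\bm{u})=\|\bm{u}\|^2-\sigma_{2}^{(\alpha)}h^2\|\delta_x\bm{u}\|^2$, positivity of $\sigma_{2}^{(\alpha)}$ for the upper bound, and the inverse estimate $\|\delta_x\bm{u}\|^2\leq\frac{4}{h^2}\|\bm{u}\|^2$ for the lower bound. Your explicit minimization of $g(\alpha)=1-4\sigma_{2}^{(\alpha)}$ at $\alpha=\frac{\sqrt{6}}{2}$ is just a worked-out derivation of the bound $\sigma_{2}^{(\alpha)}\leq 1-\frac{\sqrt{6}}{3}$ that the paper states without proof, so the two arguments coincide in substance.
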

\begin{proof}
On one hand, note that
$\sigma_{2}^{(\alpha)}=
-\frac{2\alpha^2-6\alpha+3}{6\alpha}\in\left(\frac{1}{12},1-\frac{\sqrt{6}}{3}\right]$ for $1<\alpha<2$, then one has
$$
\begin{array}{lll} \displaystyle
\left(\mathcal{L}\bm{u},\bm{u}\right)=
\left(\bm{u},\bm{v}\right)-\sigma_{2}^{(\alpha)}h^2\left(\delta_x\bm{u},\delta_x\bm{v}\right)
=||\bm{u}||^2-\sigma_{2}^{(\alpha)}h^2||\delta_x\bm{u}||^2\leq||\bm{u}||^2.
\end{array}
$$

On the other hand, using the inverse estimate $||\delta_x\bm{u}||^2\leq\frac{4}{h^2}||\bm{u}||^2$, we reach that
$$
\begin{array}{lll} \displaystyle
\left(\mathcal{L}\bm{u},\bm{u}\right)
=||\bm{u}||^2-\sigma_{2}^{(\alpha)}h^2||\delta_x\bm{u}||^2\geq\left(\frac{4\sqrt{6}}{3}-3\right)||\bm{u}||^2.
\end{array}
$$
This finishes the proof.
\end{proof}

\begin{lemma}\cite{Ding4}\label{Le4.3}
 For any $\bm{u}\in\mathcal{V}_h$, there holds that
$$
\begin{array}{lll} \displaystyle
(\delta_x^\alpha\bm{u},\bm{u})\leq0.
\end{array}
$$
\end{lemma}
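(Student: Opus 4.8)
The plan is to realize the quadratic form in Fourier space. First I would use the boundary condition $u_0=u_M=0$ to extend $\bm{u}$ by zero to all of $\mathds{Z}$, so that by the definitions of $\,^{L}\mathcal{A}_{2}^{\alpha}$ and $\,^{R}\mathcal{A}_{2}^{\alpha}$ the operator $\delta_x^\alpha$ acts as a finite section of a bi-infinite Toeplitz matrix. Substituting $k=j-\ell+1$ in $\,^{L}\mathcal{A}_{2}^{\alpha}$ and $k=j+\ell-1$ in $\,^{R}\mathcal{A}_{2}^{\alpha}$, I would check that the matrix of $\,^{R}\mathcal{A}_{2}^{\alpha}$ is the transpose of that of $\,^{L}\mathcal{A}_{2}^{\alpha}$, so $\delta_x^\alpha$ is symmetric with entries $T_{jk}=t_{j-k}$, where
$$t_m=-\frac{1}{2\cos\left(\frac{\pi\alpha}{2}\right)h^\alpha}\left(\kappa_{2,m+1}^{(\alpha)}+\kappa_{2,1-m}^{(\alpha)}\right),\qquad \kappa_{2,\ell}^{(\alpha)}=0\ \text{for}\ \ell<0.$$
Since $t_{-m}=t_m$, the form is a real symmetric Toeplitz form.

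Second, I would invoke the exact Toeplitz-form identity: for any real grid function,
$$(\delta_x^\alpha\bm{u},\bm{u})=\frac{h}{2\pi}\int_{-\pi}^{\pi} g(\theta)\Big|\sum_{j=1}^{M-1}u_j e^{ij\theta}\Big|^2\,\mathrm{d}\theta,\qquad g(\theta)=\sum_{m}t_m e^{im\theta}.$$
Because $\kappa_{2,\ell}^{(\alpha)}=O(\ell^{-\alpha-1})$ with $\alpha>1$ (Theorem \ref{Th2.1}(iv)), the coefficients are absolutely summable, so the defining series of $g$ converges and, by the generating function $\sum_\ell\kappa_{2,\ell}^{(\alpha)}z^\ell=W_2(z)$ extended to $|z|=1$, it sums to $g(\theta)=-\frac{1}{\cos(\frac{\pi\alpha}{2})h^\alpha}\,\mathrm{Re}\big[e^{i\theta}W_2(e^{-i\theta})\big]$. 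Everything then reduces to the pointwise inequality $\mathrm{Re}[e^{i\theta}W_2(e^{-i\theta})]\le0$, since $-1/\cos(\frac{\pi\alpha}{2})>0$ for $\alpha\in(1,2)$.

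Third — where the real work lies — I would evaluate the symbol. Factoring $e^{-i\theta}$ out of $\frac{3\alpha-2}{2\alpha}-\frac{2(\alpha-1)}{\alpha}z+\frac{\alpha-2}{2\alpha}z^2$ at $z=e^{-i\theta}$ and separating real and imaginary parts via $1-\cos\theta=2\sin^2(\theta/2)$ and $\sin\theta=2\sin(\theta/2)\cos(\theta/2)$, I would reach $e^{i\theta}W_2(e^{-i\theta})=|A+iB|^\alpha e^{i\Psi(\theta)}$, with $A=\frac{2(\alpha-1)}{\alpha}(\cos\theta-1)$, $B=\sin\theta$, and $\Psi(\theta)=(1-\alpha)\theta+\alpha\arg(A+iB)$. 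For $\theta\in(0,\pi)$ one has $A\le0,\ B>0$, hence $\arg(A+iB)=\pi-\arctan\!\big(\frac{\alpha}{2(\alpha-1)}\cot(\theta/2)\big)\in(\frac\pi2,\pi)$; the target is $\cos\Psi(\theta)\le0$. A direct differentiation simplifies to
$$\Psi'(\theta)=(\alpha-1)\,\frac{\sin^2(\theta/2)\big[\alpha^2-4(\alpha-1)^2\big]}{4(\alpha-1)^2\sin^2(\theta/2)+\alpha^2\cos^2(\theta/2)},$$
and since $\alpha^2-4(\alpha-1)^2=-3(\alpha-2)(\alpha-\tfrac23)>0$ on $(1,2)$, we get $\Psi'>0$. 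Thus $\Psi$ increases from $\Psi(0^+)=\alpha\pi/2$ to $\Psi(\pi^-)=\pi$, so $\Psi(\theta)\in(\pi/2,\pi)$ throughout, giving $\cos\Psi(\theta)\le0$; hence $g\le0$ on $(0,\pi)$, and $g(0)=0$. As $g$ is even, $g\le0$ on $[-\pi,\pi]$, and the identity above yields $(\delta_x^\alpha\bm{u},\bm{u})\le0$.

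I expect the main obstacle to be the sign analysis of the third step: producing the clean closed form for $\Psi$ and, above all, confirming that $\Psi$ never leaves $[\pi/2,3\pi/2]$. The whole conclusion rests on the factorization $\alpha^2-4(\alpha-1)^2=-3(\alpha-2)(\alpha-\tfrac23)$ being positive on $(1,2)$, which is precisely what forces $\Psi$ to stay between $\alpha\pi/2$ and $\pi$; were that quantity to change sign, $\Psi$ could leave the interval and the symbol could become positive, so verifying this positivity is the crux of the argument.
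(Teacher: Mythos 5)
Your proof is correct. The paper itself offers no argument for this lemma---it is imported wholesale from \cite{Ding4}---but your route (zero-extension so that $\,^{R}\mathcal{A}_{2}^{\alpha}$ is the transpose of $\,^{L}\mathcal{A}_{2}^{\alpha}$, reduction to a symmetric Toeplitz form with symbol $g(\theta)=-\mathrm{Re}\left[e^{i\theta}W_{2}(e^{-i\theta})\right]/\left(\cos\left(\tfrac{\pi\alpha}{2}\right)h^{\alpha}\right)$, and the phase analysis showing $\Psi$ increases monotonically from $\alpha\pi/2$ to $\pi$ because $\alpha^{2}-4(\alpha-1)^{2}=(2-\alpha)(3\alpha-2)>0$) is precisely the generating-function/symbol argument used in that reference, whose matrix formulation appears in this paper as Lemma \ref{Le5.4}.
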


\begin{lemma}(Grownall's inequality \cite{Quarteroni})\label{Le4.4}  Assume that $\{k_n\}$ and  $\{p_n\}$ are nonnegative sequences, and
the sequence $\{\phi_n\}$ satisfies
$$
\begin{array}{lll}
\displaystyle
\phi_0\leq q_0,\;\;\phi_n\leq q_0+\sum_{\ell=0}^{n-1}p_{\ell}+\sum_{\ell=0}^{n-1}k_{\ell}\phi_{\ell},\;n\geq1,
\end{array}
$$
where $q_{0}\geq0$. Then the sequence $\{\phi_n\}$
$$
\begin{array}{lll}
\displaystyle
\phi_n\leq \left(q_0+\sum_{\ell=0}^{n-1}p_{\ell}\right)\exp\left(\sum_{\ell=0}^{n-1}k_{\ell}\right)
\end{array}
$$
holds for $n\geq 1$.
\end{lemma}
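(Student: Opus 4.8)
The plan is to prove this discrete Gr\"onwall inequality by introducing the right-hand majorant sequence and unrolling a one-step recursion. First I would set
$$
S_n = q_0 + \sum_{\ell=0}^{n-1} p_\ell + \sum_{\ell=0}^{n-1} k_\ell \phi_\ell,\qquad n\geq1,\qquad S_0:=q_0,
$$
so that the hypothesis reads $\phi_n\leq S_n$ for every $n\geq1$, while $\phi_0\leq q_0=S_0$ takes care of the base case. Hence $\phi_n\leq S_n$ for all $n\geq0$, and it suffices to bound $S_n$.

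Next I would extract the telescoping increment. Since $S_n-S_{n-1}=p_{n-1}+k_{n-1}\phi_{n-1}$ and $\phi_{n-1}\leq S_{n-1}$ with $k_{n-1}\geq0$, one obtains the one-step recursion
$$
S_n\leq(1+k_{n-1})S_{n-1}+p_{n-1},\qquad n\geq1.
$$
Unrolling this recursion down to $S_0=q_0$ gives
$$
S_n\leq\left(\prod_{\ell=0}^{n-1}(1+k_\ell)\right)q_0+\sum_{j=0}^{n-1}\left(\prod_{\ell=j+1}^{n-1}(1+k_\ell)\right)p_j,
$$
a formula I would confirm by a short induction on $n$ (the empty product being $1$ at each end).

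The final step is to collapse the products into a single exponential. Using the elementary inequality $1+x\leq\mathrm{e}^{x}$ valid for $x\geq0$, together with $k_\ell\geq0$, each partial product obeys $\prod_{\ell=j+1}^{n-1}(1+k_\ell)\leq\prod_{\ell=0}^{n-1}(1+k_\ell)\leq\exp\!\left(\sum_{\ell=0}^{n-1}k_\ell\right)$. Substituting these bounds and factoring out the common exponential yields
$$
S_n\leq\exp\!\left(\sum_{\ell=0}^{n-1}k_\ell\right)\left(q_0+\sum_{j=0}^{n-1}p_j\right),
$$
and combining with $\phi_n\leq S_n$ completes the argument.

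I expect the only delicate point to be the bookkeeping of the inhomogeneous terms $p_j$ in the unrolling: the naive iteration attaches to each $p_j$ a different partial product $\prod_{\ell=j+1}^{n-1}(1+k_\ell)$, and one must uniformly majorize every such partial product by the full product before the single exponential factor can be pulled outside the sum. The nonnegativity assumptions on $\{k_n\}$, $\{p_n\}$ and $q_0$ are precisely what keep the recursion monotone and make these majorizations valid; there is no deeper obstacle, only careful handling of the index ranges.
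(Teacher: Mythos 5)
Your proof is correct, but there is nothing in the paper to compare it against: the paper states this lemma without any proof, quoting it as a known result (Gr\"onwall's inequality) with a citation to Quarteroni, Sacco and Saleri's \emph{Numerical Mathematics}, and then uses it as a black box in the convergence arguments of Sections 4 and 5. Your self-contained argument is the standard textbook proof and it checks out in full: the majorant $S_n$ satisfies $\phi_n\leq S_n$ for all $n\geq0$ by the hypothesis and the base case $S_0=q_0$; the telescoped increment $S_n-S_{n-1}=p_{n-1}+k_{n-1}\phi_{n-1}$ together with $\phi_{n-1}\leq S_{n-1}$ and $k_{n-1}\geq0$ gives the one-step recursion $S_n\leq(1+k_{n-1})S_{n-1}+p_{n-1}$; the unrolled product formula follows by induction (multiplication by $1+k_{n-1}\geq1>0$ preserves the inequality); and since every factor $1+k_\ell\geq1$, each partial product $\prod_{\ell=j+1}^{n-1}(1+k_\ell)$ is dominated by the full product, which in turn is dominated by $\exp\bigl(\sum_{\ell=0}^{n-1}k_\ell\bigr)$ via $1+x\leq\mathrm{e}^{x}$. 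In fact your intermediate bound, with the partial product $\prod_{\ell=j+1}^{n-1}(1+k_\ell)$ attached to each $p_j$, is strictly sharper than the stated conclusion, so you prove slightly more than the lemma asserts; the only caution is the one you already flagged, namely that the uniform majorization of all partial products (legitimate here precisely because of the nonnegativity assumptions) is what allows the single exponential factor to be pulled out of the sum.
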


Now, we turn to study the stability of finite difference scheme (\ref{eq15}) with
(\ref{eq16}) and (\ref{eq17}).
\begin{theorem}
The difference scheme (\ref{eq15}) with
(\ref{eq16}) and (\ref{eq17}) is unconditionally stable with respect to the initial
values.
\end{theorem}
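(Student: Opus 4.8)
The plan is to use the discrete energy method, reducing stability with respect to initial values to the homogeneous scheme by linearity. First I would observe that if $\bm{u}^k$ and $\bm{\tilde u}^k$ both solve (\ref{eq15})--(\ref{eq17}) with the same source $f$ but different initial data, then the perturbation $\bm{w}^k=\bm{u}^k-\bm{\tilde u}^k\in\mathcal{V}_h$ satisfies the homogeneous scheme $\mathcal{L}\delta_t w_j^{k+\frac12}=-\mathcal{L}w_j^{k+\frac12}+K_\alpha\delta_x^\alpha w_j^{k+\frac12}$ with $w_0^k=w_M^k=0$. Hence it suffices to bound $\|\bm{w}^k\|$ by $\|\bm{w}^0\|$ uniformly in $k$, $\tau$ and $h$. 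I would then take the discrete inner product of this homogeneous scheme with $\bm{w}^{k+\frac12}$.

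The central computation is the temporal term. Writing $\delta_t\bm{w}^{k+\frac12}=(\bm{w}^{k+1}-\bm{w}^{k})/\tau$ and $\bm{w}^{k+\frac12}=(\bm{w}^{k+1}+\bm{w}^{k})/2$ and invoking the self-adjointness of $\mathcal{L}$ (Lemma \ref{Le4.1}) together with the symmetry of the inner product, the two cross terms $(\mathcal{L}\bm{w}^{k+1},\bm{w}^{k})$ and $(\mathcal{L}\bm{w}^{k},\bm{w}^{k+1})$ cancel, leaving the telescoping identity
$$\left(\mathcal{L}\delta_t\bm{w}^{k+\frac12},\bm{w}^{k+\frac12}\right)=\frac{1}{2\tau}\left[(\mathcal{L}\bm{w}^{k+1},\bm{w}^{k+1})-(\mathcal{L}\bm{w}^{k},\bm{w}^{k})\right].$$
On the right side of the inner-product equality, the dispersion term $K_\alpha(\delta_x^\alpha\bm{w}^{k+\frac12},\bm{w}^{k+\frac12})$ is nonpositive by Lemma \ref{Le4.3} (since $K_\alpha>0$), and the reaction term $-(\mathcal{L}\bm{w}^{k+\frac12},\bm{w}^{k+\frac12})$ is nonpositive by the lower bound of Lemma \ref{Le4.2}. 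Both physical terms therefore act as damping, and I obtain the monotonicity $(\mathcal{L}\bm{w}^{k+1},\bm{w}^{k+1})\leq(\mathcal{L}\bm{w}^{k},\bm{w}^{k})$.

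Iterating this inequality gives $(\mathcal{L}\bm{w}^{k},\bm{w}^{k})\leq(\mathcal{L}\bm{w}^{0},\bm{w}^{0})$ for every $k$. Finally I would sandwich this discrete energy between the two equivalent-norm bounds of Lemma \ref{Le4.2}, using its lower bound on the left and its upper bound on the right, to reach
$$\left(\tfrac{4\sqrt6}{3}-3\right)\|\bm{w}^k\|^2\leq(\mathcal{L}\bm{w}^k,\bm{w}^k)\leq(\mathcal{L}\bm{w}^0,\bm{w}^0)\leq\|\bm{w}^0\|^2,$$
whence $\|\bm{w}^k\|^2\leq\frac{3}{4\sqrt6-9}\|\bm{w}^0\|^2$. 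Since $\frac{4\sqrt6}{3}-3>0$, the constant is finite and independent of $\tau$ and $h$, which is precisely unconditional stability with respect to the initial values.

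The delicate step, and the main thing to get right, is the temporal manipulation: one must use both the self-adjointness of $\mathcal{L}$ and the symmetry of the discrete inner product so that the cross terms vanish and a genuine discrete energy $(\mathcal{L}\bm{w}^k,\bm{w}^k)$ (equivalent to $\|\bm{w}^k\|^2$ via Lemma \ref{Le4.2}) emerges; the remainder is careful sign tracking. Because the perturbation problem is forcing-free, Gronwall's inequality (Lemma \ref{Le4.4}) is not needed here and would enter only if one retained the source term $f$ and estimated $(\mathcal{L}\bm{f}^{k+\frac12},\bm{w}^{k+\frac12})$ by the Cauchy--Schwarz and Young inequalities.
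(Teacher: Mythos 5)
Your proposal is correct and follows essentially the same energy argument as the paper: form the perturbation of two solutions, take the inner product with the time-averaged value, use the self-adjointness of $\mathcal{L}$ (Lemma \ref{Le4.1}) to telescope the temporal term, apply Lemmas \ref{Le4.2} and \ref{Le4.3} for the signs of the reaction and dispersion terms, and close with the norm equivalence from Lemma \ref{Le4.2}. Your constant $\frac{3}{4\sqrt{6}-9}$ is exactly the paper's $\frac{4\sqrt{6}+9}{5}$ after rationalization, and your observation that Gronwall's inequality is unnecessary here (it enters only in the convergence proof) matches the paper as well.
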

\begin{proof}
Let $$\xi_j^k=u_j^k-v_j^k,\;0\leq k\leq N,\;0\leq j\leq M,$$
where $u_j^k$ and $v_j^k$ are the solutions of the following two equations, respectively,
\begin{eqnarray}\label{eq18}
 \mathcal{L}\delta_t u_j^{k+\frac{1}{2}}
 =-\mathcal{L}u_j^{k+\frac{1}{2}}+K_\alpha\delta_x^\alpha
u_j^{k+\frac{1}{2}}
  +\mathcal{L}f_j^{k+\frac{1}{2}},\\\;\;\;0\leq k\leq N-1,\;\;1\leq j\leq M-1,\nonumber
\end{eqnarray}
\begin{equation}\label{eq19}
u_j^0=u^{0}(x_j),\;\;0\leq j\leq M,
\end{equation}
\begin{equation}\label{eq20}
u_0^k=
u_M^k=0,\;\;1\leq k\leq N.
\end{equation}
and
\begin{eqnarray}\label{eq21}
 \mathcal{L}\delta_t v_j^{k+\frac{1}{2}}
 =-\mathcal{L}v_j^{k+\frac{1}{2}}+K_\alpha\delta_x^\alpha
v_j^{k+\frac{1}{2}}
  +\mathcal{L}f_j^{k+\frac{1}{2}},\\\;\;\;0\leq k\leq N-1,\;\;1\leq j\leq M-1,\nonumber
\end{eqnarray}
\begin{eqnarray}\label{eq22}
v_j^0=u^{0}(x_j)+\varepsilon_j,\;\;0\leq j\leq M,
\end{eqnarray}
\begin{eqnarray}\label{eq23}
v_0^k=
v_M^k=0,\;\;1\leq k\leq N.
\end{eqnarray}
Subtracting (\ref{eq18})-(\ref{eq20}) from (\ref{eq21})-(\ref{eq23}) gives the perturbation equations as follows,
\begin{eqnarray}\label{eq24}
 \mathcal{L}\delta_t \xi_j^{k+\frac{1}{2}}
 =-\mathcal{L}\xi_j^{k+\frac{1}{2}}+K_\alpha\delta_x^\alpha
\xi_j^{k+\frac{1}{2}},\\\;\;\;0\leq k\leq N-1,\;\;1\leq j\leq M-1,\nonumber
\end{eqnarray}
$$
\begin{array}{lll} \displaystyle
\xi_j^0=-\varepsilon_j,\;\;0\leq j\leq M,
\end{array}
$$
$$
\begin{array}{lll} \displaystyle
\xi_0^k=
\xi_M^k=0,\;\;1\leq k\leq N.
\end{array}
$$
Taking the inner product of (\ref{eq24}) with $ \bm{\xi}^{k+\frac{1}{2}}$, replacing $k$ by $n$, and summing from $n =0$
to $k-1$ yield
\begin{eqnarray}\label{eq25}
 &&\displaystyle\sum_{n=0}^{k-1}\left(\mathcal{L}\delta_t \bm{\xi}^{n+\frac{1}{2}},\bm{\xi}^{n+\frac{1}{2}}\right)
 + \sum_{n=0}^{k-1}\left(\mathcal{L}\bm{\xi}^{n+\frac{1}{2}},\bm{\xi}^{n+\frac{1}{2}}\right)\\&&\displaystyle=K_\alpha \sum_{n=0}^{k-1}
\left(\delta_x^\alpha\bm{\xi}^{n+\frac{1}{2}},\bm{\xi}^{n+\frac{1}{2}}\right),\;\;\;0\leq k\leq N-1,\;\;1\leq j\leq M-1.\nonumber
\end{eqnarray}
For the first term on the left hand side of (\ref{eq25}), using Lemma \ref{Le4.1} leads to
$$
\begin{array}{lll} \displaystyle
 \sum_{n=0}^{k-1} \left(\mathcal{L}\delta_t \bm{\xi}^{n+\frac{1}{2}},\bm{\xi}^{n+\frac{1}{2}}\right)=
 \frac{1}{2\tau} \sum_{n=0}^{k-1}\left[\left(\mathcal{L} \bm{\xi}^{n+1},\bm{\xi}^{n+1}\right)
 -\left(\mathcal{L} \bm{\xi}^{n},\bm{\xi}^{n}\right)\right].
\end{array}
$$
For the second term on the left hand side of (\ref{eq25}), in view of Lemma \ref{Le4.2}, we have
$$
\begin{array}{lll} \displaystyle
\sum_{n=0}^{k-1}\left(\mathcal{L}\bm{\xi}^{n+\frac{1}{2}},\bm{\xi}^{n+\frac{1}{2}}\right)\geq0.
\end{array}
$$
For the term on the right hand side of (\ref{eq25}), it follows from Lemma \ref{Le4.3} that
$$
\begin{array}{lll} \displaystyle
K_\alpha \sum_{n=0}^{k-1}
\left(\delta_x^\alpha\bm{\xi}^{n+\frac{1}{2}},\bm{\xi}^{n+\frac{1}{2}}\right)\leq0.
\end{array}
$$

Hence, combining the above discussion, one has
$$
\begin{array}{lll} \displaystyle
 \sum_{n=0}^{k-1}\left[\left(\mathcal{L} \bm{\xi}^{n+1},\bm{\xi}^{n+1}\right)
 -\left(\mathcal{L} \bm{\xi}^{n},\bm{\xi}^{n}\right)\right]\leq0,
\end{array}
$$
i.e.,
$$
\begin{array}{lll} \displaystyle
\left(\mathcal{L} \bm{\xi}^{k},\bm{\xi}^{k}\right)
 \leq\left(\mathcal{L} \bm{\xi}^{0},\bm{\xi}^{0}\right).
\end{array}
$$
 Using Lemma \ref{Le4.2} again leads to
$$
\begin{array}{lll} \displaystyle
||\bm{\xi}^{k}||\leq\frac{\sqrt{5(4\sqrt{6}+9)}}{5}
||\bm{\xi}^{0}||=\frac{\sqrt{5(4\sqrt{6}+9)}}{5}
||\bm{\varepsilon}||.
\end{array}
$$
The proof is thus completed.
\end{proof}

Finally, we give the convergence result as follows.

\begin{theorem}
 Let $u_j^k$ and $u(x, t)$ be the solutions of the finite difference
 scheme (\ref{eq15})-(\ref{eq17}) and problem (\ref{eq10})-(\ref{eq12}), respectively.
 Denote $e_j^k=u_j^k-u(x_j, t_k)$, $\;0\leq k\leq N,\;\;0\leq j\leq M$.
Then there holds
$$
\begin{array}{lll} \displaystyle
||\bm{e}^{k}||\leq \frac{\sqrt{6LT}}{6}C_1\exp\left(
\frac{1}{3}T\right)(\tau^2+h^3),
\end{array}
$$
where $C_1$ is a constant independent of $\tau$ and $h$.
\end{theorem}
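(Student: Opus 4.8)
The plan is to reproduce the energy argument from the stability theorem, now carrying the truncation term through the discrete Gr\"onwall inequality (Lemma~\ref{Le4.4}), which was supplied precisely for this purpose. First I would set up the error equation. Since the exact solution satisfies (\ref{eq14}) with remainder $R_j^k$ while $u_j^k$ satisfies (\ref{eq15}), subtracting gives, for the interior error vector $\bm{e}^{k}$,
\[
\mathcal{L}\delta_t e_j^{k+\frac12}=-\mathcal{L}e_j^{k+\frac12}+K_\alpha\delta_x^\alpha e_j^{k+\frac12}-R_j^k,
\]
with homogeneous data $e_j^0=0$ and $e_0^k=e_M^k=0$, because the initial and boundary conditions of the scheme and of the continuous problem coincide. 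From $|R_j^k|\le C_1(\tau^2+h^3)$ and $h(M-1)<L$ one gets at once the discrete $L^2$ bound $\|\bm{R}^k\|\le\sqrt{L}\,C_1(\tau^2+h^3)$, which will be the only source of the error on the right-hand side.

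Next I would take the discrete inner product of the error equation with $\bm{e}^{k+\frac12}$. For the time term, self-adjointness of $\mathcal{L}$ (Lemma~\ref{Le4.1}) gives $(\mathcal{L}\delta_t\bm{e}^{k+\frac12},\bm{e}^{k+\frac12})=\tfrac{1}{2\tau}\big[(\mathcal{L}\bm{e}^{k+1},\bm{e}^{k+1})-(\mathcal{L}\bm{e}^{k},\bm{e}^{k})\big]$, exactly as in the stability proof. The fractional term $K_\alpha(\delta_x^\alpha\bm{e}^{k+\frac12},\bm{e}^{k+\frac12})$ is nonpositive by Lemma~\ref{Le4.3}, and the reaction term $(\mathcal{L}\bm{e}^{k+\frac12},\bm{e}^{k+\frac12})$ is nonnegative by Lemma~\ref{Le4.2}, so both can be discarded or moved to the favourable side. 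The genuinely new ingredient is the truncation contribution $-(\bm{R}^k,\bm{e}^{k+\frac12})$, which I would estimate by Cauchy--Schwarz and Young's inequality, and then replace $\|\bm{e}^{k+\frac12}\|^2$ by $\tfrac12(\|\bm{e}^{k+1}\|^2+\|\bm{e}^{k}\|^2)$. Writing $E^k=(\mathcal{L}\bm{e}^k,\bm{e}^k)$, this produces a one-step inequality of the form $E^{k+1}-E^k\le\tau\|\bm{R}^k\|^2+c\,\tau\,(\|\bm{e}^{k+1}\|^2+\|\bm{e}^{k}\|^2)$.

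Summing this from $0$ to $k-1$, using $E^0=0$ and the two-sided norm equivalence of Lemma~\ref{Le4.2} to pass between $E^\ell$ and $\|\bm{e}^\ell\|^2$, I would land on an estimate of exactly the shape required by Lemma~\ref{Le4.4}: the term $q_0+\sum p_\ell$ collects the accumulated truncation $\tau\sum_\ell\|\bm{R}^\ell\|^2\le TL\,C_1^2(\tau^2+h^3)^2$, while the coefficients $k_\ell$ are proportional to $\tau$ and come from the $\|\bm{e}^\ell\|^2$ terms, after absorbing the implicit $\|\bm{e}^k\|^2$ contribution into the left side (legitimate for $\tau$ small). Invoking the Gr\"onwall inequality then yields the exponential factor, and a last use of the lower bound in Lemma~\ref{Le4.2} converts $E^k$ back to $\|\bm{e}^k\|^2$, giving the claimed bound. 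The main obstacle is bookkeeping rather than depth: one must average carefully over the half-step $\bm{e}^{k+\frac12}$, track the mesh-robust constants of Lemma~\ref{Le4.2} (bounded away from $0$ and $\infty$ uniformly in $h$ and $\alpha$), and choose the Young parameter consistently so that the final constant $\tfrac{\sqrt{6LT}}{6}$ and the exponent $\tfrac13$ come out with their stated numerical values.
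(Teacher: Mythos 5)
Your proposal follows essentially the same route as the paper's own proof: the identical error equation, the same energy argument taking the inner product with $\bm{e}^{k+\frac{1}{2}}$ and invoking Lemma~\ref{Le4.1} for the time term, Lemma~\ref{Le4.2} to discard the reaction term and to pass between $(\mathcal{L}\bm{e}^k,\bm{e}^k)$ and $\|\bm{e}^k\|^2$, Lemma~\ref{Le4.3} for the nonpositivity of the fractional term, the same Cauchy--Schwarz/Young treatment of the truncation term (absorbing $\tfrac14\|\bm{e}^k\|^2$ into the left side), and the discrete Gr\"onwall inequality to finish. The only cosmetic difference is that you form a one-step recursion and then sum, while the paper sums first and estimates afterwards; this changes nothing, and the bookkeeping of constants you flag at the end is indeed delicate (the paper itself is not consistent between the constant in its statement and the one its proof produces), but the argument is the same.
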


\begin{proof}
Subtracting (\ref{eq10})-(\ref{eq12}) from (\ref{eq15})-(\ref{eq17}), we get the following error equation
\begin{eqnarray}\label{eq26}
 \mathcal{L}\delta_t e_j^{k+\frac{1}{2}}
 =-\mathcal{L}e_j^{k+\frac{1}{2}}+K_\alpha\delta_x^\alpha
e_j^{k+\frac{1}{2}}
  +R_j^{k+1},\\\;\;\;0\leq k\leq N-1,\;\;1\leq j\leq M-1,\nonumber
\end{eqnarray}
$$
\begin{array}{lll} \displaystyle
e_j^0=0,\;\;0\leq j\leq M,
\end{array}
$$
$$
\begin{array}{lll} \displaystyle
e_0^k=
e_M^k=0,\;\;1\leq k\leq N.
\end{array}
$$
Taking the inner product of (\ref{eq26}) with $ \bm{e}^{k+\frac{1}{2}}$, replacing $k$ by $n$, and summing up from $n =0$
to $k-1$, lead to
\begin{eqnarray}\label{eq27}
 \sum_{n=0}^{k-1}\left(\mathcal{L}\delta_t \bm{e}^{n+\frac{1}{2}},\bm{e}^{n+\frac{1}{2}}\right)
 + \sum_{n=0}^{k-1}\left(\mathcal{L}\bm{e}^{n+\frac{1}{2}},\bm{e}^{n+\frac{1}{2}}\right)\\=K_\alpha \sum_{n=0}^{k-1}
\left(\delta_x^\alpha\bm{e}^{n+\frac{1}{2}},\bm{e}^{n+\frac{1}{2}}\right)+
\sum_{n=0}^{k-1}\left(\bm{R}^{n+1},\bm{e}^{n+\frac{1}{2}}\right).\nonumber
\end{eqnarray}
For the last term on the right hand side of (\ref{eq27}), we have the following estimate,
$$
\begin{array}{lll} \displaystyle
\sum_{n=0}^{k-1}\left(\bm{R}^{n+1},\bm{e}^{n+\frac{1}{2}}\right)\leq
\frac{1}{4}||\bm{e}^{k}||^2
+\frac{1}{2}\sum_{n=0}^{k-1}||\bm{e}^{n}||^2
+\frac{1}{2}\sum_{n=0}^{k-1}||\bm{R}^{n+1}||^2.
\end{array}
$$

Hence, the following result can be obtained by combining with the above discussion,
$$
\begin{array}{lll} \displaystyle
\left(\mathcal{L} \bm{e}^{k},\bm{e}^{k}\right)
 \leq\left(\mathcal{L} \bm{e}^{0},\bm{e}^{0}\right)+\frac{1}{4}||\bm{e}^{k}||^2
+\frac{1}{2}\sum_{n=0}^{k-1}||\bm{e}^{n}||^2
+\frac{1}{2}\sum_{n=0}^{k-1}||\bm{R}^{n+1}||^2.
\end{array}
$$
Noticing
$$
\begin{array}{lll} \displaystyle
\left\|{\bm{R}}^{n+1}\right\|^2=h\sum_{j=1}^{M-1}\left(R_j^{n+1}\right)^2
\leq C_1^2L(\tau^2+h^3)^2
\end{array}
$$
and using Lemma \ref{Le4.2} give
\begin{eqnarray*}
||\bm{e}^{k}||^2\leq
\frac{2(16\sqrt{6}+39)}{1019}\tau\sum_{n=0}^{k-1}||\bm{e}^{n}||^2
+\frac{2(16\sqrt{6}+39)}{1019}C_1^2LT(\tau^2+h^3)^2.
\end{eqnarray*}

Finally, it is easy to obtain the following result by using Lemma \ref{Le4.3},
\begin{eqnarray*}
||\bm{e}^{k}||^2&\leq& \displaystyle
\frac{2(16\sqrt{6}+39)}{1019}\exp\left(
\frac{2(16\sqrt{6}+39)}{1019}T\right)C_1^2LT(\tau^2+h^3)^2\vspace{0.2 cm}\\
&\leq& \displaystyle
\frac{1}{6}\exp\left(
\frac{1}{6}T\right)C_1^2LT(\tau^2+h^3)^2,
\end{eqnarray*}
i.e.,
$$
\begin{array}{lll} \displaystyle
||\bm{e}^{k}||\leq
\frac{\sqrt{6LT}}{6}C_1\exp\left(
\frac{1}{12}T\right)(\tau^2+h^3).
\end{array}
$$
Therefore, the proof is finished.
\end{proof}

\section{Application to the two-dimensional equation}

In this section, we consider the following two-dimension
 Riesz spatial fractional reaction-dispersion equation,
\begin{eqnarray}\label{eq28}
 \frac{\partial{{}u(x,y,t)}}{\partial{t}}
 =-u(x,y,t)+K_\alpha\frac{\partial^\alpha
u(x,y,t)}{\partial{|x|^\alpha}}+K_\beta\frac{\partial^\alpha
u(x,y,t)}{\partial{|y|^\beta}}
 \\ +f(x,y,t),  (x,y;t)\in\Omega\times(0,T], \nonumber
  \end{eqnarray}
  with the initial value condition
  \begin{eqnarray}\label{eq29}
u(x,y,0)=u^0(x,y),\;\;(x,y)\in{\overline{\Omega}},
\end{eqnarray}
and the boundary value conditions
\begin{eqnarray}\label{eq30}
u(x,y,t)=0,\;(x,y;t)\in\partial\Omega\times(0,T],
\end{eqnarray}
where $\alpha,\beta\in(1,2)$, $\Omega=(0,L_a)\times(0,L_b)$, coefficients $K_\alpha$ and $K_\beta$ are two positive constants,
$f(x,y,t)$ and $u^{0}(x,y)$
are suitably smooth.

\subsection{Derivation of the fractional-compact difference scheme}
For a given positive integer $N$, denote the timestep size $\tau=\frac{N}{T}$ and grid points $t_k=k\tau$, $0\leq k\leq N$.
 Set the space stepsizes $h_a=\frac{L_a}{M_a}$ and $h_b=\frac{L_b}{M_b}$, where $M_a$ and $M_b$
are two positive integers. And the according grid points are
$x_i=ih_a$, $0\leq i\leq M_a$, and $y_j=jh_b$, $0\leq j\leq M_b$.
 In addition, let ${\overline{\Omega}}_h=\{(x_i,y_j|0\leq i\leq M_a,0\leq j\leq M_b\}$, $\Omega_h=\overline{\Omega}_h\cap\Omega$,
 and $\partial\Omega_h=\overline{\Omega}_h\cap\partial\Omega$.

Define the fractional-compact difference operators $\mathcal{L}_x$ and $\mathcal{L}_y$ as
$$
\begin{array}{lll}
\mathcal{L}_xu(x,y,t)=\left(1+\sigma_{2}^{(\alpha)}h_a^2\delta_x^2\right)u(x,y,t),
\end{array}
$$
and
$$
\begin{array}{lll}
\mathcal{L}_yu(x,y,t)=\left(1+\sigma_{2}^{(\beta)}h_b^2\delta_y^2\right)u(x,y,t).
\end{array}
$$
For brevity, set
$$
\begin{array}{lll}
\displaystyle
\,^{L}\mathcal{A}_{2,x}^{\alpha}u(x,y,t)
=\frac{1}{h_a^{\alpha}}\sum\limits_{\ell=0}^{\left[\frac{x}{h_a}\right]+1}
\kappa_{2,\ell}^{(\alpha)}u\left(x-(\ell-1)h_a,y,t\right),
\end{array}
$$
$$
\begin{array}{lll}
\displaystyle
\,^{L}\mathcal{A}_{2,y}^{\beta}u(x,y,t)
=\frac{1}{h_b^{\beta}}\sum\limits_{\ell=0}^{\left[\frac{y}{h_b}\right]+1}
\kappa_{2,\ell}^{(\beta)}u\left(x,y-(\ell-1)h_b,t\right),
\end{array}
$$
$$
\begin{array}{lll}
\displaystyle
\,^{R}\mathcal{A}_{2,x}^{\alpha}u(x,y,t)
=\frac{1}{h_a^{\alpha}}\sum\limits_{\ell=0}^{\left[\frac{L_a-x}{h_a}\right]+1}
\kappa_{2,\ell}^{(\alpha)}u\left(x+(\ell-1)h_a,y,t\right),
\end{array}
$$
and
$$
\begin{array}{lll}
\displaystyle
\,^{R}\mathcal{A}_{2,y}^{\beta}u(x,y,t)
=\frac{1}{h_b^{\beta}}\sum\limits_{\ell=0}^{\left[\frac{L_b-y}{h_b}\right]+1}
\kappa_{2,\ell}^{(\beta)}u\left(x,y+(\ell-1)h_b,t\right).
\end{array}
$$
Then one has
\begin{eqnarray}\label{eq31}
\displaystyle \mathcal{L}_x\frac{\partial^\alpha
u(x,y,t)}{\partial{|x|^\alpha}}= -\frac{1}{2\cos\left(\frac{\pi\alpha}{2}\right)}\left(
\,^{L}\mathcal{A}_{2,x}^{\alpha}+\,^{R}\mathcal{A}_{2,x}^{\alpha}\right)u(x,y,t)\\+\mathcal{O}(h_a^3),\nonumber
\end{eqnarray}
and
\begin{eqnarray}\label{eq32}
\displaystyle \mathcal{L}_y\frac{\partial^\beta
u(x,y,t)}{\partial{|y|^\beta}}= -\frac{1}{2\cos\left(\frac{\pi\beta}{2}\right)}\left(
\,^{L}\mathcal{A}_{2,y}^{\beta}+\,^{R}\mathcal{A}_{2,y}^{\beta}\right)u(x,y,t)\\+\mathcal{O}(h_b^3).\nonumber
\end{eqnarray}

In order to simplify the expressions, let
$$
\begin{array}{lll}
\displaystyle \delta_x^\alpha = -\frac{1}{2\cos\left(\frac{\pi\alpha}{2}\right)}\left(
\,^{L}\mathcal{A}_{2,x}^{\alpha}+\,^{R}\mathcal{A}_{2,x}^{\alpha}\right)
\end{array}
$$
and
$$
\begin{array}{lll}
\displaystyle \delta_y^\beta = -\frac{1}{2\cos\left(\frac{\pi\beta}{2}\right)}\left(
\,^{L}\mathcal{A}_{2,y}^{\beta}+\,^{R}\mathcal{A}_{2,y}^{\beta}\right).
\end{array}
$$
Accordingly, equations (\ref{eq31}) and (\ref{eq32}) can be rewritten as,
\begin{eqnarray}\label{eq33}
\displaystyle \mathcal{L}_x\frac{\partial^\alpha
u(x,y,t)}{\partial{|x|^\alpha}}= \delta_x^\alpha u(x,y,t)+\mathcal{O}(h_a^3),
\end{eqnarray}
and
\begin{eqnarray}\label{eq34}
\displaystyle \mathcal{L}_y\frac{\partial^\beta
u(x,y,t)}{\partial{|y|^\beta}}= \delta_y^\beta u(x,y,t)+\mathcal{O}(h_b^3).
\end{eqnarray}

Similar to the one-dimension case, utilizing the central difference scheme in time direction and
 fractional-compact difference formulas (\ref{eq33}) and (\ref{eq34}) in space directions, we get
\begin{eqnarray}\label{eq35}
 &&\mathcal{L}_x\mathcal{L}_y\delta_t u(x_i,y_j,t_{k+\frac{1}{2}})
 =-\mathcal{L}_x\mathcal{L}_yu(x_i,y_j,t_{k+\frac{1}{2}})\vspace{0.2 cm}\\&&+K_\alpha\mathcal{L}_y\delta_x^\alpha
u(x_i,y_j,t_{k+\frac{1}{2}}) +K_\beta\mathcal{L}_x\delta_y^\beta
u(x_i,y_j,t_{k+\frac{1}{2}})\nonumber\vspace{0.2 cm}\\&&
  +\mathcal{L}_x\mathcal{L}_yf(x_i,y_j,t_{k+\frac{1}{2}})+R_{i,j}^k,\;\;1\leq i\leq M_x-1,\;\nonumber
  \vspace{0.2 cm}\\&&1\leq j\leq M_y-1,\;0\leq k\leq N-1,\nonumber
\end{eqnarray}
where there exists a positive constant $C_2$ such that
$$
\begin{array}{lll} \displaystyle
|R_{i,j}^k|\leq C_2(\tau^2+h_x^3+h_y^3),\;1\leq i\leq M_x-1,\;1\leq j\leq M_y-1,\;0\leq k\leq N-1.
\end{array}
$$

Omitting the high-order term $R_{i,j}^k$
 and replacing the function $u(x_i,y_j,t_{k+\frac{1}{2}})$ by its numerical
  approximation $u_{i,j}^{k+\frac{1}{2}}$ in (\ref{eq33}), then a finite difference scheme
for equations (\ref{eq28})-(\ref{eq30}) is obtained,
\begin{eqnarray}\label{eq36}
 &&\mathcal{L}_x\mathcal{L}_y\delta_t u_{i,j}^{k+\frac{1}{2}}
 =-\mathcal{L}_x\mathcal{L}_yu_{i,j}^{k+\frac{1}{2}}+K_\alpha\mathcal{L}_y\delta_x^\alpha
u_{i,j}^{k+\frac{1}{2}} +K_\beta\mathcal{L}_x\delta_y^\beta
u_{i,j}^{k+\frac{1}{2}}\vspace{0.2 cm}\\
 && +\mathcal{L}_x\mathcal{L}_yf_{i,j}^{k+\frac{1}{2}},1\leq i\leq M_x-1,\;1\leq j\leq M_y-1,\;0\leq k\leq N-1,\nonumber
  \end{eqnarray}
  \begin{eqnarray}\label{eq37}
u_{i,j}^{0}=u^0(x_i,y_j),\;\;(x_i,y_j)\in{\overline{\Omega}}_h,
\end{eqnarray}
\begin{eqnarray}\label{eq38}
u_{i,j}^{k}=0,\;(x_i,y_j)\in\partial{\Omega}_h,\;1\leq k\leq N.
\end{eqnarray}

\subsection{Analysis of the fractional-compact difference scheme}
Almost the same as the one dimensional case,
let
$$
\begin{array}{lll} \displaystyle
\mathcal{V}_{h_{a},h_{b}}=\{\bm{v}|\bm{v}=\{v_{i,j}\}\;\textrm{is\,\,a\,grid\,\,function\,\,on}\,\,{\Omega}_h\,\,\textrm{and}\,\,v_{i,j} = 0\,\,\textrm{if}\,\,(x_i,y_j)\in\partial{\Omega}_h\},
\end{array}
$$
then for any $\bm{u},\bm{v}\in\mathcal{V}_{h_{a},h_{b}}$,
we introduce the discrete inner products and corresponding norms below,
$$
\begin{array}{lll} \displaystyle
(\bm{u},\bm{v})=h_ah_b\sum_{i=1}^{M_a-1}\sum_{j=1}^{M_b-1}u_{i,j}v_{i,j},
\end{array}
$$
$$
\begin{array}{lll} \displaystyle
(\delta_x\bm{u},\delta_x\bm{v})=h_ah_b\sum_{i=1}^{M_a}\sum_{j=1}^{M_b-1}
\left(\delta_x u_{i-\frac{1}{2},j}\right)\left(\delta_x v_{i-\frac{1}{2},j}\right),
\end{array}
$$
and
$$
\begin{array}{lll} \displaystyle
||\bm{u}||^2=(\bm{u},\bm{u}),\;\;||\delta_x\bm{u}||^2=(\delta_x\bm{u},\delta_x\bm{u}).
\end{array}
$$

The following definition and lemmas are useful for our discussion \cite{Laub}.

\begin{definition}
If $\mathbf{A}=(a_{ij})$ is an $m \times n$ matrix and
$\mathbf{B}=(b_{ij})$ is a $p \times q$ matrix, then the Kronecker product $\mathbf{A}\otimes \mathbf{B}$ is an $mp \times nq$ block matrix
and is denoted by
$$\mathbf{A}\otimes \mathbf{B}=
\left(
  \begin{array}{cccc}
   a_{11}\mathbf{B} & a_{12}\mathbf{B}&  \cdots & a_{1n}\mathbf{B} \vspace{0.2 cm}\\
     a_{21}\mathbf{B} &   a_{22}\mathbf{B} & \cdots&  a_{2n}\mathbf{B} \vspace{0.2 cm}\\
    \vdots &\vdots  & \ddots & \vdots \vspace{0.2 cm}\\
 a_{m1}\mathbf{B}& a_{m2}\mathbf{B} &  \cdots& a_{mn}\mathbf{B} \vspace{0.2 cm}\\
  \end{array}
\right)_.
$$
\end{definition}

\begin{lemma}\label{Le5.1}
Assume that $\mathbf{A}\in \mathds{R}^{n\times n}$ has eigenvalues $\{\lambda_j\}^n_{j=1}$, and that $\mathbf{B}\in \mathds{R}^{m\times m}$
has eigenvalues $\{\mu_j\}^m_{j=1}$.
Then the $mn$ eigenvalues of $\mathbf{A}\otimes \mathbf{B}$ are:
$$
\begin{array}{ll}
\displaystyle
 \lambda_1\mu_1,\ldots\lambda_1\mu_m; \lambda_2\mu_1,\ldots\lambda_2\mu_m;\ldots; \lambda_n\mu_1,\ldots\lambda_n\mu_m.
\end{array}
$$
\end{lemma}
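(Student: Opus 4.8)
The plan is to reduce the general case to upper-triangular factors by means of Schur's unitary triangularization, exploiting the mixed-product property of the Kronecker product. First I would record two elementary identities that follow directly from the block definition given above: the mixed-product rule $(\mathbf{A}\otimes\mathbf{B})(\mathbf{C}\otimes\mathbf{D})=(\mathbf{A}\mathbf{C})\otimes(\mathbf{B}\mathbf{D})$, valid whenever the ordinary products are defined, and the conjugate-transpose rule $(\mathbf{A}\otimes\mathbf{B})^{*}=\mathbf{A}^{*}\otimes\mathbf{B}^{*}$. An immediate consequence is that if $\mathbf{U}$ and $\mathbf{V}$ are unitary then so is $\mathbf{U}\otimes\mathbf{V}$, since $(\mathbf{U}\otimes\mathbf{V})^{*}(\mathbf{U}\otimes\mathbf{V})=(\mathbf{U}^{*}\mathbf{U})\otimes(\mathbf{V}^{*}\mathbf{V})=\mathbf{I}_n\otimes\mathbf{I}_m=\mathbf{I}_{mn}$.

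Next I would apply Schur's theorem to each factor: over $\mathds{C}$ there exist unitary $\mathbf{U}\in\mathds{C}^{n\times n}$ and $\mathbf{V}\in\mathds{C}^{m\times m}$ with $\mathbf{U}^{*}\mathbf{A}\mathbf{U}=\mathbf{T}_A$ and $\mathbf{V}^{*}\mathbf{B}\mathbf{V}=\mathbf{T}_B$, where $\mathbf{T}_A,\mathbf{T}_B$ are upper triangular carrying the eigenvalues $\lambda_1,\ldots,\lambda_n$ and $\mu_1,\ldots,\mu_m$ on their respective diagonals. Applying the mixed-product identity twice gives
$$(\mathbf{U}\otimes\mathbf{V})^{*}(\mathbf{A}\otimes\mathbf{B})(\mathbf{U}\otimes\mathbf{V})=(\mathbf{U}^{*}\mathbf{A}\mathbf{U})\otimes(\mathbf{V}^{*}\mathbf{B}\mathbf{V})=\mathbf{T}_A\otimes\mathbf{T}_B.$$
Since $\mathbf{U}\otimes\mathbf{V}$ is unitary, $\mathbf{A}\otimes\mathbf{B}$ is unitarily similar to $\mathbf{T}_A\otimes\mathbf{T}_B$ and therefore shares its spectrum, multiplicities included.

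It then remains only to read off the eigenvalues of $\mathbf{T}_A\otimes\mathbf{T}_B$. By the block definition, the $(i,j)$ block of this matrix equals $(\mathbf{T}_A)_{ij}\mathbf{T}_B$; because $\mathbf{T}_A$ is upper triangular the array of blocks is block-upper-triangular, and because each diagonal block $(\mathbf{T}_A)_{ii}\mathbf{T}_B=\lambda_i\mathbf{T}_B$ is itself upper triangular, the full matrix $\mathbf{T}_A\otimes\mathbf{T}_B$ is upper triangular. Its diagonal entries are precisely the products $\lambda_i\mu_j$, listed in exactly the order stated in the lemma, so these are the $mn$ eigenvalues of $\mathbf{A}\otimes\mathbf{B}$, which completes the argument.

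The block verification of the mixed-product identity is purely routine; the only point demanding genuine care is the treatment of non-diagonalizable $\mathbf{A}$ or $\mathbf{B}$. A naive proof would take eigenpairs $\mathbf{A}\mathbf{x}_i=\lambda_i\mathbf{x}_i$, $\mathbf{B}\mathbf{y}_j=\mu_j\mathbf{y}_j$ and verify that $\mathbf{x}_i\otimes\mathbf{y}_j$ is an eigenvector of $\mathbf{A}\otimes\mathbf{B}$ for $\lambda_i\mu_j$; this is correct but yields a spanning set of eigenvectors only when both factors are diagonalizable, and it does not by itself track algebraic multiplicities. Invoking Schur triangularization sidesteps this entirely, so I expect the triangularization step—specifically, confirming that $\mathbf{T}_A\otimes\mathbf{T}_B$ is upper triangular with diagonal $\{\lambda_i\mu_j\}$—to be the crux of the proof.
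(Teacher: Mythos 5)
The paper never proves this lemma: Definition 5.1 and Lemmas 5.1--5.3 are quoted without proof from Laub's book \cite{Laub}, so there is no in-paper argument to compare yours against, and your proposal must stand on its own. It does. The two identities you begin with (mixed product and conjugate transpose) follow from the block definition; the unitarity of $\mathbf{U}\otimes\mathbf{V}$ is an immediate consequence; and Schur triangularization over $\mathds{C}$ --- correctly taken over $\mathds{C}$ rather than $\mathds{R}$, since a real matrix can have complex eigenvalues --- reduces the claim to reading off the diagonal of $\mathbf{T}_A\otimes\mathbf{T}_B$. Your block verification that this matrix is upper triangular with diagonal entries $\lambda_i\mu_j$, in precisely the order displayed in the statement, is accurate, and because unitary similarity preserves the characteristic polynomial, your argument also yields the correct algebraic multiplicities. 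This is exactly the point where the naive computation $(\mathbf{A}\otimes\mathbf{B})(\mathbf{x}_i\otimes\mathbf{y}_j)=\lambda_i\mu_j(\mathbf{x}_i\otimes\mathbf{y}_j)$ falls short when a factor is not diagonalizable, and you flag that pitfall correctly. One incidental remark: in the only places the paper actually uses this lemma (the proofs of Lemmas \ref{Le5.5} and \ref{Le5.6}, to establish definiteness of the matrices $\mathbf{S}$ and $\mathbf{T}$), the matrices involved are real symmetric, so the real spectral theorem would suffice there; but your proof establishes the lemma in the full generality in which it is stated, which is the right thing to do.
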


\begin{lemma}\label{Le5.2}
Let $\mathbf{A}\in \mathds{R}^{m\times n}$, $\mathbf{B}\in \mathds{R}^{r\times s}$,
 $\mathbf{C}\in \mathds{R}^{n\times p}$, $\mathbf{D}\in \mathds{R}^{s\times t}$. Then
$$
\begin{array}{ll}
\displaystyle
 \left(\mathbf{A}\otimes \mathbf{B}\right)\left(\mathbf{C}\otimes \mathbf{D}\right)=\mathbf{AC}\otimes \mathbf{BD}.
\end{array}
$$
Moreover, if $\mathbf{A}\in \mathds{R}^{n\times n}$, $\mathbf{B}\in \mathds{R}^{m\times m}$, $\mathbf{I}_n$ and $\mathbf{I}_m$
are unit matrices of order $n$, $m$, respectively, then matrices $\mathbf{I}_m\otimes \mathbf{A}$
 and $\mathbf{B}\otimes\mathbf{I}_n$ can commute with each other.
\end{lemma}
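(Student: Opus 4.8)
The plan is to prove the mixed-product rule $(\mathbf{A}\otimes\mathbf{B})(\mathbf{C}\otimes\mathbf{D})=\mathbf{AC}\otimes\mathbf{BD}$ directly from the block structure of the Kronecker product given in the definition above, and then to obtain the commutativity statement as an immediate corollary by applying this rule twice. Before any computation I would record the dimension bookkeeping that guarantees every product is defined: since $\mathbf{A}$ is $m\times n$ and $\mathbf{C}$ is $n\times p$, the product $\mathbf{AC}$ makes sense, and likewise $\mathbf{BD}$ is defined because $\mathbf{B}$ is $r\times s$ and $\mathbf{D}$ is $s\times t$. Moreover $\mathbf{A}\otimes\mathbf{B}$ is $(mr)\times(ns)$ while $\mathbf{C}\otimes\mathbf{D}$ is $(ns)\times(pt)$, so their product is an $(mr)\times(pt)$ matrix, which is exactly the size of $\mathbf{AC}\otimes\mathbf{BD}$; thus the asserted equality is at least dimensionally consistent.

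The core step is a block computation. By the definition of the Kronecker product, the $(i,j)$ block of $\mathbf{A}\otimes\mathbf{B}$ is $a_{ij}\mathbf{B}$ and the $(j,k)$ block of $\mathbf{C}\otimes\mathbf{D}$ is $c_{jk}\mathbf{D}$, each of the fixed size $r\times s$ and $s\times t$ respectively. Multiplying the two block matrices, the $(i,k)$ block of $(\mathbf{A}\otimes\mathbf{B})(\mathbf{C}\otimes\mathbf{D})$ equals $\sum_{j=1}^{n}(a_{ij}\mathbf{B})(c_{jk}\mathbf{D})=\left(\sum_{j=1}^{n}a_{ij}c_{jk}\right)\mathbf{BD}=(\mathbf{AC})_{ik}\,\mathbf{BD}$, where I pull the scalars out and use the common factor $\mathbf{BD}$ that is independent of $i,k$. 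Since $(\mathbf{AC})_{ik}\,\mathbf{BD}$ is precisely the $(i,k)$ block of $\mathbf{AC}\otimes\mathbf{BD}$, the two matrices agree block-by-block, which establishes the identity. As an equivalent route I could verify it entrywise using $(\mathbf{A}\otimes\mathbf{B})_{(i,\alpha),(j,\beta)}=a_{ij}b_{\alpha\beta}$, forming the product as a double sum over the pair $(j,\beta)$, and then factoring that sum into $\big(\sum_j a_{ij}c_{jk}\big)\big(\sum_\beta b_{\alpha\beta}d_{\beta\gamma}\big)=(\mathbf{AC})_{ik}(\mathbf{BD})_{\alpha\gamma}$.

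For the commutativity claim I would specialize the rule. With $\mathbf{A}$ of order $n$ and $\mathbf{B}$ of order $m$, both $\mathbf{I}_m\otimes\mathbf{A}$ and $\mathbf{B}\otimes\mathbf{I}_n$ are $(mn)\times(mn)$, so their products in either order are defined. Applying the identity just proved gives $(\mathbf{I}_m\otimes\mathbf{A})(\mathbf{B}\otimes\mathbf{I}_n)=(\mathbf{I}_m\mathbf{B})\otimes(\mathbf{A}\mathbf{I}_n)=\mathbf{B}\otimes\mathbf{A}$, and symmetrically $(\mathbf{B}\otimes\mathbf{I}_n)(\mathbf{I}_m\otimes\mathbf{A})=(\mathbf{B}\mathbf{I}_m)\otimes(\mathbf{I}_n\mathbf{A})=\mathbf{B}\otimes\mathbf{A}$. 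Since both orders yield the same matrix $\mathbf{B}\otimes\mathbf{A}$, the two commute.

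I do not expect a genuine obstacle in this lemma, as it is the standard mixed-product property. The only point requiring care is the index and dimension bookkeeping: I must confirm that the inner dimensions match so the block multiplication is legitimate, and that the factor $\mathbf{BD}$ (which does not depend on the outer block indices) can be pulled out uniformly across the sum. Once that is verified, the matching of blocks and the twofold application to the identity factors complete the proof.
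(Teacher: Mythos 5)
Your proof is correct and complete; the only thing to note is that there is no internal argument in the paper to compare it against, since the paper states this lemma without proof, importing it (together with the definition of the Kronecker product and Lemmas \ref{Le5.1} and \ref{Le5.3}) from the reference \cite{Laub}. Your argument is the standard one and it is sound: the dimension bookkeeping you record is exactly what legitimizes the blockwise multiplication (the column partition of $\mathbf{A}\otimes \mathbf{B}$ into $n$ groups of $s$ columns matches the row partition of $\mathbf{C}\otimes \mathbf{D}$ into $n$ groups of $s$ rows), and once that is in place the $(i,k)$ block of the product collapses to $\bigl(\sum_{j=1}^{n}a_{ij}c_{jk}\bigr)\mathbf{B}\mathbf{D}=(\mathbf{A}\mathbf{C})_{ik}\,\mathbf{B}\mathbf{D}$, which is the $(i,k)$ block of $\mathbf{A}\mathbf{C}\otimes \mathbf{B}\mathbf{D}$. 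Deriving the commutativity claim by applying the identity twice, so that both $\left(\mathbf{I}_m\otimes \mathbf{A}\right)\left(\mathbf{B}\otimes \mathbf{I}_n\right)$ and $\left(\mathbf{B}\otimes \mathbf{I}_n\right)\left(\mathbf{I}_m\otimes \mathbf{A}\right)$ reduce to $\mathbf{B}\otimes \mathbf{A}$, is precisely the specialization the paper itself relies on later, in the proofs of Lemmas \ref{Le5.5} and \ref{Le5.6}, when it commutes factors such as $\mathbf{C}_b^{(\beta)}\otimes\mathbf{I}_a$ and $\mathbf{I}_b\otimes \mathbf{D}_a^{(\alpha)}$ to conclude that the matrix $\mathbf{S}$ is symmetric. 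In short: your proposal supplies a self-contained proof of a fact the paper only cites, and it does so by the expected route.
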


\begin{lemma}\label{Le5.3}
For all $\mathbf{A}$ and $\mathbf{B}$, there holds
$$
\begin{array}{ll}
\displaystyle
\left(\mathbf{A}\otimes \mathbf{B}\right)^{T}=\mathbf{A}^{T}\otimes \mathbf{B}^{T}.
\end{array}
$$
\end{lemma}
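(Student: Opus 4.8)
The plan is to verify the identity entrywise by tracking how the block structure of the Kronecker product interacts with transposition. First I would fix dimensions: let $\mathbf{A}=(a_{ij})$ be $m\times n$ and $\mathbf{B}=(b_{k\ell})$ be $p\times q$, so that $\mathbf{A}\otimes\mathbf{B}$ is the $mp\times nq$ block matrix whose $(i,j)$ block is $a_{ij}\mathbf{B}$, exactly as in the Definition above. Flattening the block and within-block indices, the entry of $\mathbf{A}\otimes\mathbf{B}$ in row $(i-1)p+k$ and column $(j-1)q+\ell$ equals $a_{ij}b_{k\ell}$, for $1\leq i\leq m$, $1\leq j\leq n$, $1\leq k\leq p$, $1\leq\ell\leq q$.

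Next I would apply transposition, which simply exchanges the roles of the row and column multi-indices. Thus the entry of $(\mathbf{A}\otimes\mathbf{B})^{T}$ in row $(j-1)q+\ell$ and column $(i-1)p+k$ coincides with the $((i-1)p+k,\,(j-1)q+\ell)$ entry of $\mathbf{A}\otimes\mathbf{B}$, namely $a_{ij}b_{k\ell}$. On the other side, since $\mathbf{A}^{T}$ is $n\times m$ with $(j,i)$ entry $a_{ij}$ and $\mathbf{B}^{T}$ is $q\times p$ with $(\ell,k)$ entry $b_{k\ell}$, the same Definition gives that the $((j-1)q+\ell,\,(i-1)p+k)$ entry of $\mathbf{A}^{T}\otimes\mathbf{B}^{T}$ equals $(\mathbf{A}^{T})_{ji}(\mathbf{B}^{T})_{\ell k}=a_{ij}b_{k\ell}$. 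Since both matrices have the common size $nq\times mp$, this suffices.

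Comparing the two expressions shows that $(\mathbf{A}\otimes\mathbf{B})^{T}$ and $\mathbf{A}^{T}\otimes\mathbf{B}^{T}$ agree in every position, so they are equal. Equivalently, and perhaps more cleanly for the writeup, one may argue at the block level: transposing the block matrix $\mathbf{A}\otimes\mathbf{B}$ sends its $(i,j)$ block $a_{ij}\mathbf{B}$ into the $(j,i)$ block slot and transposes it to $a_{ij}\mathbf{B}^{T}$, which is precisely the $(j,i)$ block $(\mathbf{A}^{T})_{ji}\mathbf{B}^{T}$ of $\mathbf{A}^{T}\otimes\mathbf{B}^{T}$. I do not anticipate any genuine obstacle here; the only point requiring care is the index bookkeeping between the block indices $(i,j)$ and the within-block indices $(k,\ell)$, making sure the flattening $(i-1)p+k$ is used consistently before and after transposition so that the block transpose and the entrywise transpose are reconciled.
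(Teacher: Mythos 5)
Your proof is correct. Note, however, that the paper does not actually prove this lemma: it is stated as a known fact imported from the matrix-analysis reference \cite{Laub}, so there is no in-paper argument to compare against; your write-up supplies the standard self-contained proof that the paper delegates to the literature. Both of your arguments go through: the entrywise one correctly identifies the $\left((i-1)p+k,\,(j-1)q+\ell\right)$ entry of $\mathbf{A}\otimes\mathbf{B}$ as $a_{ij}b_{k\ell}$, observes that transposition merely swaps the row and column multi-indices, matches this against the $\left((j-1)q+\ell,\,(i-1)p+k\right)$ entry $(\mathbf{A}^{T})_{ji}(\mathbf{B}^{T})_{\ell k}=a_{ij}b_{k\ell}$ of $\mathbf{A}^{T}\otimes\mathbf{B}^{T}$, and verifies that both sides share the size $nq\times mp$; the block-level variant (transposition sends the $(i,j)$ block $a_{ij}\mathbf{B}$ to the $(j,i)$ slot as $a_{ij}\mathbf{B}^{T}$) is an equally valid and slightly cleaner packaging of the same bookkeeping.
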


\begin{lemma}\cite{Ding4}\label{Le5.4} Denote
$$\mathbf{E}_p^{(\gamma)}=
\left(
  \begin{array}{ccccc}
    \kappa_{2,1}^{(\gamma)} & \kappa_{2,0}^{(\gamma)} & 0 & \cdots & 0 \vspace{0.2 cm}\\
    \kappa_{2,2}^{(\gamma)} & \kappa_{2,1}^{(\gamma)} & \kappa_{2,0}^{(\gamma)} & 0& \cdots \vspace{0.2 cm}\\
    \vdots &\vdots & \ddots & \ddots & \ddots \vspace{0.2 cm}\\
  \kappa_{2,M_p-2}^{(\gamma)} & \ldots & \kappa_{2,2}^{(\gamma)} &\kappa_{2,1}^{(\gamma)}& \kappa_{2,0}^{(\gamma)} \vspace{0.2 cm}\\
  \kappa_{2,M_p-1}^{(\gamma)}& \kappa_{2,M_p-2}^{(\gamma)}& \ldots & \kappa_{2,2}^{(\gamma)}&  \kappa_{2,1}^{(\gamma)} \vspace{0.2 cm}\\
  \end{array}
\right)_{(M_p-1)\times(M_p-1).}
$$
Then matrix $\left(\mathbf{E}_p^{(\gamma)}+\mathbf{E}_p^{(\gamma)^{T}}\right)$ is semi-negative definite.
\end{lemma}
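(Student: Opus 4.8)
Since $\mathbf{u}^{T}\mathbf{E}_p^{(\gamma)T}\mathbf{u}=\mathbf{u}^{T}\mathbf{E}_p^{(\gamma)}\mathbf{u}$ for every real vector $\mathbf{u}$, the claim is equivalent to $\mathbf{u}^{T}\mathbf{E}_p^{(\gamma)}\mathbf{u}\le 0$. The quickest route is to recognize $\mathbf{E}_p^{(\gamma)}$ as the matrix of a one-sided operator. Restricting $\,^{L}\mathcal{A}_{2}^{\gamma}$ and $\,^{R}\mathcal{A}_{2}^{\gamma}$ (same operators, order $\gamma\in(1,2)$) to the interior nodes under the homogeneous condition $v_0=v_{M_p}=0$, a direct indexing (the coefficient of $u_m$ in $\,^{L}\mathcal{A}_{2}^{\gamma}u_j$ is $h^{-\gamma}\kappa_{2,j-m+1}^{(\gamma)}$) shows that $\,^{L}\mathcal{A}_{2}^{\gamma}=h^{-\gamma}\mathbf{E}_p^{(\gamma)}$ and $\,^{R}\mathcal{A}_{2}^{\gamma}=h^{-\gamma}\mathbf{E}_p^{(\gamma)T}$ as matrices. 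Hence the discrete Riesz operator satisfies $\delta_x^{\gamma}=-\frac{1}{2\cos(\pi\gamma/2)}h^{-\gamma}(\mathbf{E}_p^{(\gamma)}+\mathbf{E}_p^{(\gamma)T})$, and since $\gamma\in(1,2)$ forces $\cos(\pi\gamma/2)<0$ the scalar prefactor is positive. Lemma \ref{Le4.3} then gives $(\delta_x^{\gamma}\mathbf{u},\mathbf{u})\le 0$, which immediately yields $\mathbf{u}^{T}(\mathbf{E}_p^{(\gamma)}+\mathbf{E}_p^{(\gamma)T})\mathbf{u}\le 0$.

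To keep the argument self-contained I would instead establish the nonpositivity directly through the generating function, which is also what underlies Lemma \ref{Le4.3}. Extend $\mathbf{u}$ by zeros to a sequence on $\mathbb{Z}$ and set $U(\theta)=\sum_j u_j e^{ij\theta}$. Writing $c_k=\kappa_{2,k+1}^{(\gamma)}$ (so $c_k=0$ for $k\le -2$ and $\mathbf{E}_p^{(\gamma)}$ has entries $c_{i-j}$), Parseval's identity gives
$$\mathbf{u}^{T}\mathbf{E}_p^{(\gamma)}\mathbf{u}=\sum_{i,j}u_i c_{i-j}u_j=\frac{1}{2\pi}\int_0^{2\pi}\mathrm{Re}\,g(\theta)\,|U(\theta)|^2\,d\theta,\qquad g(\theta)=e^{-i\theta}W_2(e^{i\theta}),$$
where the convergence and continuity of the symbol $g$ are guaranteed by properties (iv)--(vi) of Theorem \ref{Th2.1} (decay $\kappa_{2,\ell}^{(\gamma)}=\mathcal{O}(\ell^{-\gamma-1})$ and $\sum_\ell\kappa_{2,\ell}^{(\gamma)}=0$). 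The lemma therefore reduces to the pointwise estimate $\mathrm{Re}\,g(\theta)\le0$ for all $\theta$.

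For that estimate I would factor the symbol. A short computation gives $W_2(e^{i\theta})=(e^{i\theta}Q(\theta))^{\gamma}$ with
$$Q(\theta)=\frac{2(\gamma-1)}{\gamma}(\cos\theta-1)-i\sin\theta=-2\sin\frac{\theta}{2}\left(\frac{2(\gamma-1)}{\gamma}\sin\frac{\theta}{2}+i\cos\frac{\theta}{2}\right),$$
so that $g(\theta)=e^{i(\gamma-1)\theta}Q(\theta)^{\gamma}$ and $\mathrm{Re}\,g(\theta)\le0$ is equivalent to the total-argument condition $(\gamma-1)\theta+\gamma\arg Q(\theta)\in[\tfrac{\pi}{2},\tfrac{3\pi}{2}]\pmod{2\pi}$. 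By the symmetry $g(-\theta)=\overline{g(\theta)}$ it suffices to treat $\theta\in[0,\pi]$; there $Q$ lies in the third quadrant, and the factorization shows $\arg Q(\theta)=\arg B(\theta)-\pi$ with $B(\theta)=\tfrac{2(\gamma-1)}{\gamma}\sin\tfrac{\theta}{2}+i\cos\tfrac{\theta}{2}$ ranging through the first quadrant, $\arg B$ decreasing from $\pi/2$ at $\theta=0$ to $0$ at $\theta=\pi$.

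The main obstacle is precisely this monotone control of the total argument uniformly in both $\theta\in[0,\pi]$ and $\gamma\in(1,2)$: one must show that $\Phi(\theta):=(\gamma-1)\theta+\gamma\arg B(\theta)-\gamma\pi$ stays inside $[-\tfrac{3\pi}{2},-\tfrac{\pi}{2}]$. I would first record the endpoint values $\Phi(0^+)=-\gamma\pi/2\in(-\pi,-\pi/2)$ and $\Phi(\pi)=-\pi$, then secure the bound by differentiating $\arg B=\arctan\!\left(\tfrac{\gamma}{2(\gamma-1)}\cot\tfrac{\theta}{2}\right)$ and exploiting that the coefficient $a=\tfrac{2(\gamma-1)}{\gamma}\in(0,1)$ keeps $(\gamma-1)+\gamma\,(\arg B)'$ under control. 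This reduces to a single-variable calculus estimate, routine in spirit but delicate near $\theta=0$, where $Q$ vanishes and the integrand degenerates to the admissible boundary value $0$.
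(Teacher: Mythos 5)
The paper never proves this lemma: both it and Lemma \ref{Le4.3} are quoted from \cite{Ding4} without proof, so there is no internal argument to compare against and your proposal has to stand on its own. Your first paragraph is formally legitimate inside this paper but circular in substance: your matrix identification (the coefficient of $u_m$ in $h^{\gamma}\,^{L}\mathcal{A}_{2}^{\gamma}u_j$ is indeed $\kappa_{2,j-m+1}^{(\gamma)}$, so $\,^{L}\mathcal{A}_{2}^{\gamma}$ and $\,^{R}\mathcal{A}_{2}^{\gamma}$ on interior nodes are $h^{-\gamma}\mathbf{E}_p^{(\gamma)}$ and $h^{-\gamma}\mathbf{E}_p^{(\gamma)T}$) shows precisely that Lemma \ref{Le5.4} and Lemma \ref{Le4.3} are the same fact up to the positive factor $h^{1-\gamma}\bigl(-2\cos(\pi\gamma/2)\bigr)^{-1}$, so deducing one from the other adds no content.

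The real proof is your second, symbol-based argument, and it is correct and completable; it is also the standard argument that underlies \cite{Ding4}. The Toeplitz--Parseval reduction to $\mathrm{Re}\,g(\theta)\le 0$ is valid (absolute convergence of the symbol follows from $\kappa_{2,\ell}^{(\gamma)}=\mathcal{O}(\ell^{-\gamma-1})$), the factorization $P(e^{i\theta})=e^{i\theta}Q(\theta)$ with $Q=-2\sin\frac{\theta}{2}\,B(\theta)$ checks out by direct computation, and your third-quadrant determination of $\arg Q$ is the correct continuous branch (consistent with $P(-1)=4(\gamma-1)/\gamma>0$ at $\theta=\pi$). The only unfinished step is the ``single-variable calculus estimate,'' and it deserves to be finished because it closes far more cleanly than you suggest: with $\Phi(\theta)=(\gamma-1)\theta+\gamma\arg B(\theta)-\gamma\pi$, $\arg B=\arctan\bigl(c\cot\frac{\theta}{2}\bigr)$ and $c=\frac{\gamma}{2(\gamma-1)}$, one gets
$$
\Phi'(\theta)=(\gamma-1)\,\frac{(1-c^2)\sin^2\frac{\theta}{2}}{\sin^2\frac{\theta}{2}+c^2\cos^2\frac{\theta}{2}}\;\le\;0,
$$
because $c>1$ exactly when $\gamma<2$. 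Hence $\Phi$ decreases monotonically from $\Phi(0^+)=-\gamma\pi/2$ to $\Phi(\pi)=-\pi$, so $\Phi(\theta)\in[-\pi,-\gamma\pi/2]\subset[-\pi,-\pi/2]$ for all $\theta\in(0,\pi]$, giving $\cos\Phi\le 0$, i.e. $\mathrm{Re}\,g\le 0$; no delicacy survives at $\theta=0$ beyond the admissible value $g(0)=0$, which is property (vi) of Theorem \ref{Th2.1}. With this one computation inserted, your proof is complete.
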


\begin{lemma}\label{Le5.5}
For any mesh functions $\bm{u},\bm{v}\in \mathcal{V}_{h_{a},h_{b}}$, there exists a symmetric positive
 definite operator $\mathcal{P}$ such that
$$
\begin{array}{ll}
\displaystyle
((K_\alpha\mathcal{L}_y
\delta_x^\alpha+K_\beta\mathcal{L}_x\delta_y^\beta)\bm{u},\bm{v})=-(\mathcal{P}\bm{u},\mathcal{P}\bm{v}),\;K_\alpha,K_\beta>0.
\end{array}
$$
\end{lemma}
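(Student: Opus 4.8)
The plan is to pass to matrix form via Kronecker products and thereby reduce the identity to a single statement about symmetry and definiteness of one symmetric matrix. First I would fix an ordering of the grid unknowns so that the $x$-direction operators act on the first tensor factor and the $y$-direction operators on the second. Writing $\mathbf{A}_x$, $\mathbf{A}_y$ for the matrices of $\delta_x^\alpha$, $\delta_y^\beta$ and $\mathbf{L}_x$, $\mathbf{L}_y$ for those of $\mathcal{L}_x=1+\sigma_2^{(\alpha)}h_a^2\delta_x^2$ and $\mathcal{L}_y=1+\sigma_2^{(\beta)}h_b^2\delta_y^2$, the two compositions become $\mathcal{L}_y\delta_x^\alpha\leftrightarrow \mathbf{A}_x\otimes \mathbf{L}_y$ and $\mathcal{L}_x\delta_y^\beta\leftrightarrow \mathbf{L}_x\otimes \mathbf{A}_y$; that these genuinely commute in the stated order is exactly Lemma \ref{Le5.2}. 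Hence the operator on the left is represented by $\mathcal{M}:=K_\alpha(\mathbf{A}_x\otimes \mathbf{L}_y)+K_\beta(\mathbf{L}_x\otimes \mathbf{A}_y)$, and the whole claim reduces to factoring $-\mathcal{M}$ as $\mathcal{P}^{2}$ with $\mathcal{P}$ symmetric.

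Next I would record symmetry. The matrices $\mathbf{L}_x$, $\mathbf{L}_y$ are symmetric because $\delta_x^2$, $\delta_y^2$ are, and the Riesz matrices $\mathbf{A}_x$, $\mathbf{A}_y$ are symmetric because the Riesz discretization symmetrizes the one-sided operators $\,^{L}\mathcal{A}_{2}^{\,\cdot}$ and $\,^{R}\mathcal{A}_{2}^{\,\cdot}$, whose matrices are mutual transposes (so their sum is $\mathbf{E}+\mathbf{E}^{T}$ up to the positive scalar $-1/(2\cos(\tfrac{\pi\alpha}{2}))\,h^{-\alpha}$). By Lemma \ref{Le5.3} each Kronecker term $\mathbf{A}_x\otimes \mathbf{L}_y$ and $\mathbf{L}_x\otimes \mathbf{A}_y$ is then symmetric, and therefore so is $\mathcal{M}$. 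This symmetry is exactly what is needed for the bilinear form $(\mathcal{M}\bm{u},\bm{v})$ to coincide with the symmetric form $-(\mathcal{P}\bm{u},\mathcal{P}\bm{v})$ for \emph{all} $\bm{u},\bm{v}$, not just on the diagonal.

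The heart of the argument is definiteness. The coordinatewise analogue of Lemma \ref{Le4.2} gives $\mathbf{L}_x,\mathbf{L}_y$ symmetric positive definite, since $\sigma_2^{(\alpha)}\in(\tfrac1{12},\,1-\tfrac{\sqrt6}{3}]$ forces the lower constant $\tfrac{4\sqrt6}{3}-3>0$ via the inverse estimate. On the other hand $-1/(2\cos(\tfrac{\pi\alpha}{2}))>0$ for $1<\alpha<2$, so combining Lemma \ref{Le5.4} (the symmetric part $\mathbf{E}+\mathbf{E}^{T}$ is semi-negative definite) with the one-dimensional inequality of Lemma \ref{Le4.3} shows $\mathbf{A}_x,\mathbf{A}_y$ are symmetric negative (semi-)definite. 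I would then invoke Lemma \ref{Le5.1}: the eigenvalues of $\mathbf{A}_x\otimes \mathbf{L}_y$ are the products $\lambda_i(\mathbf{A}_x)\mu_j(\mathbf{L}_y)\le0$, since $\lambda_i(\mathbf{A}_x)\le0$ and $\mu_j(\mathbf{L}_y)>0$, and likewise for $\mathbf{L}_x\otimes \mathbf{A}_y$. Each Kronecker term is thus symmetric with nonpositive spectrum, hence negative (semi-)definite, and so is their sum $\mathcal{M}$.

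Finally, $-\mathcal{M}$ is symmetric positive (semi-)definite, so it admits a unique symmetric positive (semi-)definite square root, and I would set $\mathcal{P}:=(-\mathcal{M})^{1/2}$. Using $\mathcal{P}=\mathcal{P}^{T}$ and $\mathcal{P}^{2}=-\mathcal{M}$ yields, for all $\bm{u},\bm{v}\in\mathcal{V}_{h_{a},h_{b}}$,
\[
((K_\alpha\mathcal{L}_y\delta_x^\alpha+K_\beta\mathcal{L}_x\delta_y^\beta)\bm{u},\bm{v})
=(\mathcal{M}\bm{u},\bm{v})=-(\mathcal{P}^{2}\bm{u},\bm{v})=-(\mathcal{P}\bm{u},\mathcal{P}\bm{v}),
\]
which is the assertion. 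The main obstacle I anticipate is pinning down \emph{strict} rather than merely semi-definiteness of the Riesz matrices $\mathbf{A}_x,\mathbf{A}_y$: Lemma \ref{Le5.4} delivers only semi-negative definiteness, so to guarantee a genuinely positive definite $\mathcal{P}$ one must additionally argue that the discrete Riesz operator has strictly negative spectrum (for instance from the strict sign pattern and decay of the coefficients in properties (iv) and (vi) of Theorem \ref{Th2.1}); absent that, $\mathcal{P}$ is only positive semidefinite, though the displayed identity continues to hold.
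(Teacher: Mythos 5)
Your proposal is correct and follows essentially the same route as the paper: pass to matrix form with Kronecker products, get symmetry of the combined matrix from Lemmas \ref{Le5.2}--\ref{Le5.3}, get semi-negative definiteness from the positive eigenvalues of the compact-operator matrices together with Lemmas \ref{Le5.1} and \ref{Le5.4}, and then factor the negated matrix to produce $\mathcal{P}$. Your closing caveat is well taken but is shared by the paper itself: its proof also only delivers a \emph{semi}-positive definite $\mathcal{P}$ (despite the lemma's wording), and in fact its factor $\mathbf{P}=\mathbf{\Lambda}^{\frac{1}{2}}\mathbf{H}$ is not symmetric in general, so your choice $\mathcal{P}=(-\mathcal{M})^{1/2}$ is the cleaner one.
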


\begin{proof}
Firstly, we rewrite the inner product $
((K_\alpha\mathcal{L}_y
\delta_x^\alpha+K_\beta\mathcal{L}_x\delta_y^\beta)\bm{u},\bm{v})
$ in a matrix form,
$$
\begin{array}{ll}
\displaystyle
((K_\alpha\mathcal{L}_y
\delta_x^\alpha+K_\beta\mathcal{L}_x\delta_y^\beta)\bm{u},\bm{v})=h_ah_b\bm{v}^{T}\mathbf{S}\bm{u},
\end{array}
$$
where
$$
\begin{array}{ll}
\displaystyle
\mathbf{S}=\frac{K_\alpha}{h_a^\alpha}\left(\mathbf{C}_b^{(\beta)}\otimes\mathbf{I}_a\right)\left(\mathbf{I}_b\otimes \mathbf{D}_a^{(\alpha)}\right)
+\frac{K_\beta}{h_b^\beta}\left(\mathbf{I}_b\otimes \mathbf{C}_a^{(\alpha)}\right)\left(\mathbf{D}_b^{(\beta)}\otimes \mathbf{I}_a\right),
\end{array}
$$
$$
\begin{array}{ll}
\displaystyle
\mathbf{D}_p^{(\gamma)}=-\frac{1}{2\cos\left(\frac{\pi\gamma}{2}\right)}\left(\mathbf{E}_p^{(\gamma)}+\mathbf{E}_p^{(\gamma)^{T}}\right),\;
p=a,b,\;\gamma=\alpha,\beta.
\end{array}
$$
Here $\mathbf{I}_p$ is the identity matrix of order $M_p-1$,
$\mathbf{C}_p^{(\gamma)}$ $(p=a,b,\;\gamma=\alpha,\beta)$ has the form
$$\mathbf{C}_p^{(\gamma)}=
\left(
  \begin{array}{ccccc}
   1-2\sigma_{2}^{(\gamma)}& \sigma_{2}^{(\gamma)}&0& \cdots & 0 \vspace{0.2 cm}\\
   \sigma_{2}^{(\gamma)} & 1-2\sigma_{2}^{(\gamma)} & \sigma_{2}^{(\gamma)} & 0 & \cdots \vspace{0.2 cm}\\
    \vdots &\vdots & \ddots & \ddots & \ddots \vspace{0.2 cm}\\
  0 & \ldots & \sigma_{2}^{(\gamma)} &1-2\sigma_{2}^{(\gamma)}& \sigma_{2}^{(\gamma)}\vspace{0.2 cm}\\
 0& 0& \ldots & \sigma_{2}^{(\gamma)}& 1-2\sigma_{2}^{(\gamma)}\vspace{0.2 cm}\\
  \end{array}
\right)_{(M_p-1)\times(M_p-1).}
$$

It follows from Lemmas \ref{Le5.2} and \ref{Le5.3} that
$$
\begin{array}{lll}
\displaystyle
\mathbf{S}^{T}&=&\displaystyle\frac{K_\alpha}{h_a^\alpha}\left(\mathbf{I}_b\otimes \mathbf{D}_a^{(\alpha)}\right)^{T}\left(\mathbf{C}_b^{(\beta)}\otimes\mathbf{I}_a\right)^{T}
+\frac{K_\beta}{h_b^\beta}\left(\mathbf{D}_b^{(\beta)}\otimes \mathbf{I}_a\right)^{T}\left(\mathbf{I}_b\otimes \mathbf{C}_a^{(\alpha)}\right)^{T}\vspace{0.2 cm}\\
&=&\displaystyle\frac{K_\alpha}{h_a^\alpha}\left(\mathbf{I}_b\otimes \mathbf{D}_a^{(\alpha)}\right)\left(\mathbf{C}_b^{(\beta)}\otimes\mathbf{I}_a\right)
+\frac{K_\beta}{h_b^\beta}\left(\mathbf{D}_b^{(\beta)}\otimes \mathbf{I}_a\right)\left(\mathbf{I}_b\otimes \mathbf{C}_a^{(\alpha)}\right)\vspace{0.2 cm}\\
&=&\displaystyle\frac{K_\alpha}{h_a^\alpha}\left(\mathbf{C}_b^{(\beta)}\otimes\mathbf{I}_a\right)\left(\mathbf{I}_b\otimes \mathbf{D}_a^{(\alpha)}\right)
+\frac{K_\beta}{h_b^\beta}\left(\mathbf{I}_b\otimes \mathbf{C}_a^{(\alpha)}\right)\left(\mathbf{D}_b^{(\beta)}\otimes \mathbf{I}_a\right)
\vspace{0.2 cm}\\
&=&\displaystyle\mathbf{S},
\end{array}
$$
namely, $\mathbf{S}$ is a real symmetric matrix.

Besides, one easily knows that all the eigenvalues of matrix $\mathbf{C}_p^{(\gamma)}$ are
$$
\begin{array}{lll}
\displaystyle
\lambda_j\left(\mathbf{C}_p^{(\gamma)}\right)=1-4\sigma_{2}^{(\gamma)}\sin^2\left(\frac{\pi j}{2M_p}\right)\geq
\frac{4\sqrt{6}}{3}-3>0,\;j=0,1,\ldots,M_p-1.
\end{array}
$$
From Lemma \ref{Le5.4}, we see that the eigenvalues of matrix $\mathbf{D}_p^{(\gamma)}$ satisfy
$$
\begin{array}{lll}
\displaystyle
\lambda_j\left(\mathbf{D}_p^{(\gamma)}\right)\leq0,\;j=0,1,\ldots,M_p-1.
\end{array}
$$

Combining the above analysis and using Lemma \ref{Le5.1}, we can state that matrix $\mathbf{S}$ is
 semi-negative definite. Accordingly, there exists an orthogonal matrix $\mathbf{H}$ and a diagonal matrix $\mathbf{\Lambda}$ such that
$$
\begin{array}{lll}
\displaystyle
\mathbf{S}=-\mathbf{H}^{T}\mathbf{\Lambda}\mathbf{H}
=-\left(\mathbf{\Lambda}^{\frac{1}{2}}\mathbf{H}\right)^{T}\left(\mathbf{\Lambda}^{\frac{1}{2}}\mathbf{H}\right)
=-\mathbf{P}^{T}\mathbf{P},
\end{array}
$$
where $\mathbf{P}=\mathbf{\Lambda}^{\frac{1}{2}}\mathbf{H}$.
Therefore, we have
$$
\begin{array}{ll}
\displaystyle
((K_\alpha\mathcal{L}_y
\delta_x^\alpha+K_\beta\mathcal{L}_x\delta_y^\beta)\bm{u},\bm{v})=h_ah_b\bm{v}^{T}\mathbf{S}\bm{u}
=-h_ah_b\left(\mathbf{P}\bm{v}\right)^{T}\left(\mathbf{P}\bm{u}\right)=-(\mathcal{P}\bm{u},\mathcal{P}\bm{v}),
\end{array}
$$
where $\mathcal{P}$ is the associate operator of matrix $\mathbf{P}$, which is
 symmetric and semi-positive definite. The proof is thus ended.
\end{proof}

\begin{lemma}\label{Le5.6}
For any mesh functions, $\bm{u},\bm{v}\in \mathcal{V}_{h_a,h_b}$, there exists a symmetric positive
 definite operator $\mathcal{Q}$ such that
$$
\begin{array}{ll}
\displaystyle
(\mathcal{L}_x\mathcal{L}_y
\bm{u},\bm{v})=(\mathcal{Q}\bm{u},\mathcal{Q}\bm{v}).
\end{array}
$$
\end{lemma}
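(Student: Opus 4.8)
The plan is to mimic the proof of Lemma \ref{Le5.5}, but with the indefinite matrix $\mathbf{S}$ replaced by a symmetric positive definite matrix whose symmetric square root furnishes $\mathcal{Q}$. First I would rewrite the inner product in matrix form. Since $\mathcal{L}_x=1+\sigma_{2}^{(\alpha)}h_a^2\delta_x^2$ acts only on the $x$-index and $\mathcal{L}_y=1+\sigma_{2}^{(\beta)}h_b^2\delta_y^2$ only on the $y$-index, under the same lexicographic ordering used in Lemma \ref{Le5.5} these operators correspond to the Kronecker factors $\mathbf{I}_b\otimes\mathbf{C}_a^{(\alpha)}$ and $\mathbf{C}_b^{(\beta)}\otimes\mathbf{I}_a$, respectively, exactly the blocks already appearing in the matrix $\mathbf{S}$. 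Hence
$$(\mathcal{L}_x\mathcal{L}_y\bm{u},\bm{v})=h_ah_b\bm{v}^{T}\mathbf{T}\bm{u},\qquad \mathbf{T}=\left(\mathbf{I}_b\otimes\mathbf{C}_a^{(\alpha)}\right)\left(\mathbf{C}_b^{(\beta)}\otimes\mathbf{I}_a\right),$$
and by Lemma \ref{Le5.2} the product collapses to $\mathbf{T}=\mathbf{C}_b^{(\beta)}\otimes\mathbf{C}_a^{(\alpha)}$.

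Next I would establish that $\mathbf{T}$ is symmetric positive definite. Symmetry follows from Lemma \ref{Le5.3} together with the fact that each $\mathbf{C}_p^{(\gamma)}$ is a symmetric tridiagonal matrix, so that $\mathbf{T}^{T}=\mathbf{C}_b^{(\beta)^{T}}\otimes\mathbf{C}_a^{(\alpha)^{T}}=\mathbf{T}$. For positivity I reuse the eigenvalue computation already carried out in Lemma \ref{Le5.5}, namely
$$\lambda_j\left(\mathbf{C}_p^{(\gamma)}\right)=1-4\sigma_{2}^{(\gamma)}\sin^2\left(\frac{\pi j}{2M_p}\right)\geq\frac{4\sqrt{6}}{3}-3>0,\qquad j=0,1,\ldots,M_p-1.$$
By Lemma \ref{Le5.1} the eigenvalues of $\mathbf{T}$ are the products $\lambda_i(\mathbf{C}_b^{(\beta)})\lambda_j(\mathbf{C}_a^{(\alpha)})$, each a product of two strictly positive numbers, so $\mathbf{T}$ is positive definite.

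Finally, since $\mathbf{T}$ is symmetric positive definite it admits an orthogonal diagonalization $\mathbf{T}=\mathbf{H}^{T}\mathbf{\Lambda}\mathbf{H}$ with $\mathbf{\Lambda}>0$, and I would take $\mathbf{Q}=\mathbf{H}^{T}\mathbf{\Lambda}^{1/2}\mathbf{H}=\mathbf{T}^{1/2}$, the unique symmetric positive definite square root, so that $\mathbf{T}=\mathbf{Q}^{T}\mathbf{Q}$. Then
$$(\mathcal{L}_x\mathcal{L}_y\bm{u},\bm{v})=h_ah_b\bm{v}^{T}\mathbf{Q}^{T}\mathbf{Q}\bm{u}=h_ah_b(\mathbf{Q}\bm{v})^{T}(\mathbf{Q}\bm{u})=(\mathcal{Q}\bm{u},\mathcal{Q}\bm{v}),$$
where $\mathcal{Q}$ is the grid operator associated with $\mathbf{Q}$, symmetric positive definite by construction. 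The only point requiring care is the bookkeeping of the Kronecker ordering, so that $\mathcal{L}_x$ and $\mathcal{L}_y$ indeed correspond to $\mathbf{I}_b\otimes\mathbf{C}_a^{(\alpha)}$ and $\mathbf{C}_b^{(\beta)}\otimes\mathbf{I}_a$ and that their product is recognized via Lemma \ref{Le5.2}; once this is settled the remaining steps are routine, and no genuine obstacle arises, precisely because $\mathcal{L}_x\mathcal{L}_y$ is positive definite rather than indefinite as in Lemma \ref{Le5.5}.
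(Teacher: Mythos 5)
Your proposal is correct and follows essentially the same route as the paper: rewrite $(\mathcal{L}_x\mathcal{L}_y\bm{u},\bm{v})=h_ah_b\bm{v}^{T}\mathbf{T}\bm{u}$ with $\mathbf{T}=\left(\mathbf{I}_b\otimes\mathbf{C}_a^{(\alpha)}\right)\left(\mathbf{C}_b^{(\beta)}\otimes\mathbf{I}_a\right)$, establish symmetry and positive definiteness of $\mathbf{T}$ via the Kronecker-product lemmas and the eigenvalue bound from Lemma \ref{Le5.5}, and factor $\mathbf{T}=\mathbf{Q}^{T}\mathbf{Q}$ to define $\mathcal{Q}$. Your two refinements, collapsing $\mathbf{T}$ to $\mathbf{C}_b^{(\beta)}\otimes\mathbf{C}_a^{(\alpha)}$ explicitly by Lemma \ref{Le5.2} and taking $\mathbf{Q}=\mathbf{T}^{1/2}$ (the symmetric square root) rather than the paper's $\mathbf{Q}=\widetilde{\mathbf{\Lambda}}^{1/2}\widetilde{\mathbf{H}}$, are in fact slightly cleaner, since the paper's choice of $\mathbf{Q}$ is not symmetric in general even though the lemma asserts $\mathcal{Q}$ is.
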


\begin{proof}
Similar to Lemma \ref{Le5.5}, the matrix form of inner product $(\mathcal{L}_x\mathcal{L}_y
\bm{u},\bm{v})$ is
\begin{equation}\label{eq39}
\displaystyle
(\mathcal{L}_x\mathcal{L}_y
\bm{u},\bm{v})=h_ah_b\bm{v}^{T}\mathbf{T}\bm{u},
\end{equation}
where $\mathbf{T}=\left(\mathbf{I}_b\otimes \mathbf{C}_a^{(\alpha)}\right)\left(\mathbf{C}_b^{(\beta)}\otimes\mathbf{I}_a\right)$.
Here, we easily know that matrix $\mathbf{T}$ is symmetric and positive definite by almost the same reasoning of Lemma \ref{Le5.5}.
So, there exist an orthogonal matrix $\mathbf{\widetilde{H}}$ and a diagonal matrix $\mathbf{\widetilde{\Lambda}}$ such that
\begin{equation}\label{eq40}
\displaystyle
\mathbf{T}=\mathbf{H}^{T}\mathbf{\Lambda}\mathbf{H}
=-\left(\mathbf{\Lambda}^{\frac{1}{2}}\mathbf{\widetilde{H}}\right)^{T}\left(\mathbf{\widetilde{\Lambda}}^{\frac{1}{2}}\mathbf{\widetilde{H}}\right)
=\mathbf{Q}^{T}\mathbf{Q},
\end{equation}
in which $\mathbf{Q}=\mathbf{\widetilde{\Lambda}}^{\frac{1}{2}}\mathbf{\widetilde{H}}$.

Substitution (\ref{eq40}) in (\ref{eq39}) yields
$$
\begin{array}{ll}
\displaystyle
(\mathcal{L}_x\mathcal{L}_y
\bm{u},\bm{v})=h_ah_b\bm{v}^{T}\mathbf{T}\bm{u}=h_ah_b\left(\mathbf{Q}\bm{v}\right)^{T}\mathbf{Q}\bm{u}
=(\mathcal{Q}\bm{u},\mathcal{Q}\bm{v}),
\end{array}
$$
where $\mathcal{Q}$ is a
symmetric and positive definite corresponding to matrix $\mathbf{Q}$.
The proof is completed.
\end{proof}

\begin{lemma}\label{Le5.7}
For any mesh function $\bm{u}\in \mathcal{V}_{h_a,h_b}$ and symmetric positive
 definite operator $\mathcal{Q}$, there holds that
$$
\begin{array}{ll}
\displaystyle
\left(\frac{4\sqrt{6}}{3}-3\right)||\bm{u}||\leq||\mathcal{Q}\bm{u}||\leq||\bm{u}||.
\end{array}
$$
\end{lemma}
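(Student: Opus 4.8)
The plan is to reduce the stated double inequality to a Rayleigh-quotient estimate for the matrix $\mathbf{T}=\mathbf{Q}^{T}\mathbf{Q}$ that represents $\mathcal{L}_x\mathcal{L}_y$ in Lemma \ref{Le5.6}. Writing out the discrete inner product, $\|\mathcal{Q}\bm{u}\|^{2}=(\mathcal{Q}\bm{u},\mathcal{Q}\bm{u})=h_ah_b(\mathbf{Q}\bm{u})^{T}(\mathbf{Q}\bm{u})=h_ah_b\,\bm{u}^{T}\mathbf{T}\bm{u}$, while $\|\bm{u}\|^{2}=h_ah_b\,\bm{u}^{T}\bm{u}$. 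Hence the ratio $\|\mathcal{Q}\bm{u}\|^{2}/\|\bm{u}\|^{2}$ is exactly the Rayleigh quotient of the symmetric positive definite matrix $\mathbf{T}$, so it is bounded between $\lambda_{\min}(\mathbf{T})$ and $\lambda_{\max}(\mathbf{T})$. Thus it suffices to show that every eigenvalue of $\mathbf{T}$ lies in the interval $\big[(\tfrac{4\sqrt6}{3}-3)^{2},\,1\big]$; taking square roots (legitimate since $\tfrac{4\sqrt6}{3}-3>0$) then yields the claimed bounds directly.

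Next I would pin down the spectrum of $\mathbf{T}$. By the mixed-product rule of Lemma \ref{Le5.2}, $\mathbf{T}=(\mathbf{I}_b\otimes\mathbf{C}_a^{(\alpha)})(\mathbf{C}_b^{(\beta)}\otimes\mathbf{I}_a)=\mathbf{C}_b^{(\beta)}\otimes\mathbf{C}_a^{(\alpha)}$, so by Lemma \ref{Le5.1} its eigenvalues are precisely the products $\lambda_i(\mathbf{C}_b^{(\beta)})\,\lambda_j(\mathbf{C}_a^{(\alpha)})$. From the eigenvalue formula already recorded in the proof of Lemma \ref{Le5.5}, namely $\lambda_j(\mathbf{C}_p^{(\gamma)})=1-4\sigma_2^{(\gamma)}\sin^2(\tfrac{\pi j}{2M_p})$, together with the range $\sigma_2^{(\gamma)}\in(\tfrac1{12},\,1-\tfrac{\sqrt6}{3}]$ for $1<\gamma<2$ used in Lemma \ref{Le4.2}, one gets $\tfrac{4\sqrt6}{3}-3\le\lambda_j(\mathbf{C}_p^{(\gamma)})\le1$: the upper bound since $\sigma_2^{(\gamma)}>0$ and $\sin^2\ge0$, and the lower bound since $\sin^2<1$ gives $\lambda_j(\mathbf{C}_p^{(\gamma)})\ge 1-4\sigma_2^{(\gamma)}\ge 1-4(1-\tfrac{\sqrt6}{3})=\tfrac{4\sqrt6}{3}-3$.

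Multiplying the two one-dimensional factor bounds then gives $(\tfrac{4\sqrt6}{3}-3)^{2}\le\lambda(\mathbf{T})\le1$ for every eigenvalue of $\mathbf{T}$, which is exactly what the reduction in the first step requires; substituting this into the Rayleigh-quotient inequalities and rooting both sides completes the proof. I do not expect a genuine obstacle here, as the whole argument is a clean spectral computation built on lemmas already proved; the only care needed is bookkeeping, namely invoking the mixed-product identity to collapse $\mathbf{T}$ to a single Kronecker factor, keeping $\tfrac{4\sqrt6}{3}-3$ positive so that the squared inequality may be rooted without sign trouble, and observing that the lower spectral bound, while approached as $j\to M_p-1$, is never violated because $\sin^2(\tfrac{\pi j}{2M_p})<1$ throughout the admissible index range.
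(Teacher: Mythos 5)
Your proof is correct, but it takes a genuinely different route from the paper's. The paper never computes the spectrum of the full two-dimensional matrix: it argues at the operator level, introducing symmetric positive definite square roots $\mathcal{R}_x,\mathcal{R}_y$ of $\mathcal{L}_x,\mathcal{L}_y$, writing $\|\mathcal{Q}\bm{u}\|^2=(\mathcal{L}_x\mathcal{L}_y\bm{u},\bm{u})=(\mathcal{R}_x\mathcal{L}_y\bm{u},\mathcal{R}_x\bm{u})$, and then peeling off the two one-dimensional bounds in succession --- first $(\mathcal{L}_y\bm{w},\bm{w})\geq\left(\frac{4\sqrt{6}}{3}-3\right)\|\bm{w}\|^2$ applied to $\bm{w}=\mathcal{R}_x\bm{u}$ (which tacitly uses that $\mathcal{R}_x$ and $\mathcal{L}_y$ commute, since they act in different coordinate directions), then the analogous bound for $\mathcal{L}_x$; the upper bound is obtained the same way with the inequalities reversed. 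You instead collapse $\mathbf{T}=\left(\mathbf{I}_b\otimes\mathbf{C}_a^{(\alpha)}\right)\left(\mathbf{C}_b^{(\beta)}\otimes\mathbf{I}_a\right)$ to the single Kronecker factor $\mathbf{C}_b^{(\beta)}\otimes\mathbf{C}_a^{(\alpha)}$ via Lemma \ref{Le5.2}, read off its eigenvalues as pairwise products of the one-dimensional eigenvalues via Lemma \ref{Le5.1}, bound each factor in $\left[\frac{4\sqrt{6}}{3}-3,\,1\right]$ using the explicit formula $1-4\sigma_2^{(\gamma)}\sin^2\left(\frac{\pi j}{2M_p}\right)$ already recorded in the proof of Lemma \ref{Le5.5}, and finish with a Rayleigh-quotient argument on the symmetric matrix $\mathbf{T}=\mathbf{Q}^{T}\mathbf{Q}$. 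Both proofs are sound. Your spectral computation is more explicit and in fact sharper --- it identifies the entire spectrum of $\mathbf{T}$ rather than just two-sided bounds --- and it sidesteps the commutation step that the paper leaves implicit; the paper's square-root argument, in turn, relies only on the one-dimensional energy bounds of Lemma \ref{Le4.2} type, so it would carry over to settings where explicit eigenvalues of the one-dimensional compact operators are not available.
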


\begin{proof}
From Lemma \ref{Le5.5}, we know that operators $\mathcal{L}_x$ and $\mathcal{L}_y$
are both symmetric and positive definite. So, there exist two symmetric and positive definite
operators $\mathcal{R}_x$ and $\mathcal{R}_y$ such that
$$
\begin{array}{ll}
\displaystyle
(\mathcal{L}_x\bm{u},\bm{v})=(\mathcal{R}_x\bm{u},\mathcal{R}_x\bm{v}),\;\;
(\mathcal{L}_y\bm{u},\bm{v})=(\mathcal{R}_y\bm{u},\mathcal{R}_y\bm{v})
\end{array}
$$
for any mesh functions $\bm{u},\bm{v}\in\mathcal{V}_{h_a,h_b}$.

Therefore, one has
$$
\begin{array}{lll}
\displaystyle
||\mathcal{Q}\bm{u}||^2&=&\displaystyle(\mathcal{L}_x\mathcal{L}_y\bm{u},\bm{u})
=(\mathcal{R}_x\mathcal{L}_y\bm{u},\mathcal{R}_x\bm{u})\vspace{0.2 cm}\\
&\geq&\displaystyle\left(\frac{4\sqrt{6}}{3}-3\right)(\mathcal{R}_x\bm{u},\mathcal{R}_x\bm{u})
=\left(\frac{4\sqrt{6}}{3}-3\right)(\mathcal{L}_x\bm{u},\bm{u})\vspace{0.2 cm}\\
&\geq&\displaystyle\left(\frac{4\sqrt{6}}{3}-3\right)^2||\bm{u}||^2.
\end{array}
$$
On the other hand, we also have
$$
\begin{array}{lll}
\displaystyle
||\mathcal{Q}\bm{u}||^2&=&\displaystyle(\mathcal{L}_x\mathcal{L}_y\bm{u},\bm{u})
=(\mathcal{R}_x\mathcal{L}_y\bm{u},\mathcal{R}_x\bm{u})\vspace{0.2 cm}\\
&\leq&\displaystyle(\mathcal{R}_x\bm{u},\mathcal{R}_x\bm{u})
=(\mathcal{L}_x\bm{u},\bm{u})\vspace{0.2 cm}\\
&\leq&\displaystyle||\bm{u}||^2.
\end{array}
$$
The proof is thus shown.
\end{proof}

Now, we turn to consider the stability of difference scheme (\ref{eq36})--(\ref{eq38}). Suppose $v_{i,j}^k$ is the solution
of the following finite difference equation,
\begin{eqnarray}\label{eq41}
 &&\mathcal{L}_x\mathcal{L}_y\delta_t v_{i,j}^{k+\frac{1}{2}}
 =-\mathcal{L}_x\mathcal{L}_y v_{i,j}^{k+\frac{1}{2}}+K_\alpha\mathcal{L}_y\delta_x^\alpha
v_{i,j}^{k+\frac{1}{2}} +K_\beta\mathcal{L}_x\delta_y^\beta
v_{i,j}^{k+\frac{1}{2}}
 \vspace{0.2 cm}\\&& +\mathcal{L}_x\mathcal{L}_yf_{i,j}^{k+\frac{1}{2}},1\leq i\leq M_x-1,\;1\leq j\leq M_y-1,\;0\leq k\leq N-1,\nonumber
  \end{eqnarray}
  \begin{eqnarray}\label{eq42}
v_{i,j}^{0}=u^0(x_i,y_j)+\varepsilon_{i,j},\;\;(x_i,y_j)\in{\overline{\Omega}}_h,
  \end{eqnarray}
  \begin{eqnarray}\label{eq43}
v_{i,j}^{k}=0,\;(x_i,y_j)\in\partial{\Omega}_h,\;1\leq k\leq N.
\end{eqnarray}
Let $\xi_{i,j}^k=v_{i,j}^{k}-u_{i,j}^{k}$, then the perturbation equation can
 be obtained by using equations (\ref{eq36})--(\ref{eq38}) and (\ref{eq41})--(\ref{eq43}),
\begin{eqnarray}\label{eq44}
 \mathcal{L}_x\mathcal{L}_y\delta_t \xi_{i,j}^{k+\frac{1}{2}}
 =-\mathcal{L}_x\mathcal{L}_y \xi_{i,j}^{k+\frac{1}{2}}+K_\alpha\mathcal{L}_y\delta_x^\alpha
\xi_{i,j}^{k+\frac{1}{2}} +K_\beta\mathcal{L}_x\delta_y^\beta
\xi_{i,j}^{k+\frac{1}{2}},\vspace{0.2 cm}\\1\leq i\leq M_x-1,\;1\leq j\leq M_y-1,\;0\leq k\leq N-1,\nonumber
\end{eqnarray}
$$
\begin{array}{lll} \displaystyle
\xi_{i,j}^{0}=\varepsilon_{i,j},\;\;(x_i,y_j)\in{\overline{\Omega}}_h,\vspace{0.2 cm}\\ \displaystyle
\xi_{i,j}^{k}=0,\;(x_i,y_j)\in\partial{\Omega}_h,\;1\leq k\leq N.
\end{array}
$$

Next, the stability is shown below.

\begin{theorem}
The finite difference scheme (\ref{eq36})--(\ref{eq38}) is unconditionally stable with respect to the initial values.
\end{theorem}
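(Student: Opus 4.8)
The plan is to reproduce the energy argument used for the one-dimensional scheme, with the scalar operator $\mathcal{L}$ replaced by the product operator $\mathcal{L}_x\mathcal{L}_y$ and with Lemmas \ref{Le5.5}--\ref{Le5.7} taking the roles played by Lemmas \ref{Le4.2} and \ref{Le4.3}. First I would take the discrete inner product of the perturbation equation (\ref{eq44}) with $\bm{\xi}^{k+\frac{1}{2}}$, then replace $k$ by $n$ and sum over $n=0,\ldots,k-1$. This produces an energy identity whose left-hand side carries the temporal term $\sum_{n=0}^{k-1}\bigl(\mathcal{L}_x\mathcal{L}_y\delta_t\bm{\xi}^{n+\frac{1}{2}},\bm{\xi}^{n+\frac{1}{2}}\bigr)$ together with the reaction term $\sum_{n=0}^{k-1}\bigl(\mathcal{L}_x\mathcal{L}_y\bm{\xi}^{n+\frac{1}{2}},\bm{\xi}^{n+\frac{1}{2}}\bigr)$, and whose right-hand side is the spatial term $\sum_{n=0}^{k-1}\bigl((K_\alpha\mathcal{L}_y\delta_x^\alpha+K_\beta\mathcal{L}_x\delta_y^\beta)\bm{\xi}^{n+\frac{1}{2}},\bm{\xi}^{n+\frac{1}{2}}\bigr)$.

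The crucial structural fact is that $\mathcal{L}_x\mathcal{L}_y$ corresponds to the symmetric matrix $\mathbf{T}$ of Lemma \ref{Le5.6}, hence it is self-adjoint on $\mathcal{V}_{h_a,h_b}$. Using this self-adjointness together with $\delta_t\bm{\xi}^{n+\frac{1}{2}}=(\bm{\xi}^{n+1}-\bm{\xi}^n)/\tau$ and $\bm{\xi}^{n+\frac{1}{2}}=(\bm{\xi}^{n+1}+\bm{\xi}^n)/2$, the cross terms cancel and the temporal sum telescopes to $\frac{1}{2\tau}\bigl[(\mathcal{L}_x\mathcal{L}_y\bm{\xi}^k,\bm{\xi}^k)-(\mathcal{L}_x\mathcal{L}_y\bm{\xi}^0,\bm{\xi}^0)\bigr]$. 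By Lemma \ref{Le5.6} the reaction sum equals $\sum_{n=0}^{k-1}\|\mathcal{Q}\bm{\xi}^{n+\frac{1}{2}}\|^2\ge 0$, while by Lemma \ref{Le5.5} the spatial sum equals $-\sum_{n=0}^{k-1}\|\mathcal{P}\bm{\xi}^{n+\frac{1}{2}}\|^2\le 0$. Discarding these two sign-definite quantities leaves the clean inequality $(\mathcal{L}_x\mathcal{L}_y\bm{\xi}^k,\bm{\xi}^k)\le(\mathcal{L}_x\mathcal{L}_y\bm{\xi}^0,\bm{\xi}^0)$, that is $\|\mathcal{Q}\bm{\xi}^k\|\le\|\mathcal{Q}\bm{\xi}^0\|$.

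Finally I would convert this bound on the $\mathcal{Q}$-energy back to the discrete $L_2$ norm by means of Lemma \ref{Le5.7}: the lower bound $\bigl(\frac{4\sqrt{6}}{3}-3\bigr)\|\bm{\xi}^k\|\le\|\mathcal{Q}\bm{\xi}^k\|$ and the upper bound $\|\mathcal{Q}\bm{\xi}^0\|\le\|\bm{\xi}^0\|=\|\bm{\varepsilon}\|$ combine to yield $\|\bm{\xi}^k\|\le\frac{4\sqrt{6}+9}{5}\|\bm{\varepsilon}\|$, a bound that is independent of the time level $k$, of $\tau$, and of the spatial mesh sizes $h_a,h_b$. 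This is precisely unconditional stability with respect to the initial perturbation $\bm{\varepsilon}$.

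I do not anticipate a genuine obstacle, because Lemmas \ref{Le5.5}--\ref{Le5.7} have already absorbed all of the hard work of diagonalizing the spatial operators through the Kronecker-product machinery. The single point demanding care is the clean telescoping of the mixed temporal term, which hinges on the self-adjointness of $\mathcal{L}_x\mathcal{L}_y$; this in turn rests on the commutativity of $\mathbf{I}_b\otimes\mathbf{C}_a^{(\alpha)}$ and $\mathbf{C}_b^{(\beta)}\otimes\mathbf{I}_a$ recorded in Lemma \ref{Le5.2}, which guarantees that $\mathbf{T}$ is symmetric. Once that is in hand, the remainder of the argument is a routine transcription of the one-dimensional stability proof.
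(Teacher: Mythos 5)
Your proposal is correct and follows essentially the same route as the paper's proof: inner product of the perturbation equation (\ref{eq44}) with $\bm{\xi}^{k+\frac{1}{2}}$, sign-definiteness of the reaction and spatial terms via Lemmas \ref{Le5.6} and \ref{Le5.5}, telescoping of the temporal term through the self-adjoint factorization $\mathcal{Q}$, and conversion back to the discrete $L_2$ norm via Lemma \ref{Le5.7}, arriving at the same constant $\frac{4\sqrt{6}+9}{5}$. The only difference is cosmetic: you sum over $n$ before discarding the sign-definite terms, whereas the paper derives the per-step inequality (\ref{eq48}) first and then sums.
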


\begin{proof}
Taking the inner product of (\ref{eq44}) with $\bm{\xi}^{k+\frac{1}{2}}$ gives
\begin{eqnarray}\label{eq45}
&&\left(\mathcal{L}_x\mathcal{L}_y\delta_t \bm{\xi}^{k+\frac{1}{2}},\bm{\xi}^{k+\frac{1}{2}}\right)
 =-\left(\mathcal{L}_x\mathcal{L}_y\bm{\xi}^{k+\frac{1}{2}},\bm{\xi}^{k+\frac{1}{2}}\right)\\&&
 +\left(\left(K_\alpha\mathcal{L}_y\delta_x^\alpha+K_\beta\mathcal{L}_x\delta_y^\beta\right)\bm{\xi}^{k+\frac{1}{2}},
 \bm{\xi}^{k+\frac{1}{2}}\right).\nonumber
\end{eqnarray}

Applying Lemma \ref{Le5.6} to the first term on the right hand side of equation (\ref{eq45}), we know that there exists
 a symmetric and positive definite operator $\mathcal{Q}$ such that,
\begin{eqnarray}\label{eq46}
 -\left(\mathcal{L}_x\mathcal{L}_y\bm{\xi}^{k+\frac{1}{2}},\bm{\xi}^{k+\frac{1}{2}}\right)=
 -\left(\mathcal{Q}\bm{\xi}^{k+\frac{1}{2}},\mathcal{Q}\bm{\xi}^{k+\frac{1}{2}}\right)=
 -\left\|\mathcal{Q}\bm{\xi}^{k+\frac{1}{2}}\right\|^2\leq0.
\end{eqnarray}

For the second term on the right hand side of (\ref{eq45}),
 we can also get the following result by using Lemma \ref{Le5.5},
\begin{eqnarray}\label{eq47}
\left(\left(K_\alpha\mathcal{L}_y\delta_x^\alpha+K_\beta\mathcal{L}_x\delta_y^\beta\right)\bm{\xi}^{k+\frac{1}{2}},
 \bm{\xi}^{k+\frac{1}{2}}\right)=-\left\|\mathcal{P}\bm{\xi}^{k+\frac{1}{2}}\right\|^2\leq0,
\end{eqnarray}
where $\mathcal{P}$ is a symmetric and positive definite operator.

Substituting (\ref{eq46}) and (\ref{eq47}) into (\ref{eq45}) leads to
\begin{eqnarray}\label{eq48}
\left\|\mathcal{Q}\bm{\xi}^{k+1}\right\|^2-\left\|\mathcal{Q}\bm{\xi}^{k}\right\|^2\leq0.
\end{eqnarray}
Replacing $k$ by $n$ and summing up $n$ from 0 to $k-1$ on both sides of (\ref{eq48}) yield
$$
\begin{array}{lll} \displaystyle
\left\|\mathcal{Q}\bm{\xi}^{k}\right\|\leq\left\|\mathcal{Q}\bm{\xi}^{0}\right\|.
\end{array}
$$
Utilizing Lemma \ref{Le5.7} gives
$$
\begin{array}{lll} \displaystyle
\left\|\bm{\xi}^{k}\right\|\leq\frac{(4\sqrt{6}+9)}{5}\left\|\bm{\xi}^{0}\right\|
=\frac{(4\sqrt{6}+9)}{5}\left\|\bm{\varepsilon}\right\|.
\end{array}
$$
All this ends the proof.
\end{proof}

Finally, we study the convergence for finite difference scheme (\ref{eq36})--(\ref{eq38}).

\begin{theorem}
 Let $u_{i,j}^k$ and $u(x,y, t)$ be the solutions of the finite difference
 scheme (\ref{eq36})--(\ref{eq38}) and problem (\ref{eq28})--(\ref{eq30}), respectively.
 Denote $e_{i,j}^k=u_{i,j}^k-u(x_i,y_j,t_k)$, $\;0\leq k\leq N,\;\;0\leq i\leq M_a\;\;0\leq j\leq M_b$.
Then there holds
$$
\begin{array}{lll} \displaystyle
||\bm{e}^{k}||\leq 32C_2\sqrt{TL_aL_b}\exp\left(
8T\right)(\tau^2+h_a^3+h_b^3),
\end{array}
$$
where $C_2$ is a positive constant.
\end{theorem}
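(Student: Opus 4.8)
The plan is to mirror the one-dimensional convergence argument of the previous section, now driving everything through the packaged energy identities of Lemmas \ref{Le5.5}--\ref{Le5.7}. First I would subtract the scheme (\ref{eq36})--(\ref{eq38}) from the consistency relation (\ref{eq35}) to obtain the error equation
$$\mathcal{L}_x\mathcal{L}_y\delta_t e_{i,j}^{k+\frac{1}{2}}=-\mathcal{L}_x\mathcal{L}_y e_{i,j}^{k+\frac{1}{2}}+K_\alpha\mathcal{L}_y\delta_x^\alpha e_{i,j}^{k+\frac{1}{2}}+K_\beta\mathcal{L}_x\delta_y^\beta e_{i,j}^{k+\frac{1}{2}}+R_{i,j}^{k+1},$$
subject to $e_{i,j}^0=0$ and homogeneous boundary data, where $|R_{i,j}^{k}|\leq C_2(\tau^2+h_a^3+h_b^3)$ and hence $\|\bm{R}^{n+1}\|^2\leq C_2^2 L_aL_b(\tau^2+h_a^3+h_b^3)^2$ after summation over the grid.

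Next I take the discrete inner product of this equation with $\bm{e}^{k+\frac{1}{2}}$. Since $\mathcal{L}_x\mathcal{L}_y$ is self-adjoint (its matrix $\mathbf{T}$ is symmetric by Lemma \ref{Le5.6}), the temporal term collapses to $\frac{1}{2\tau}(\|\mathcal{Q}\bm{e}^{k+1}\|^2-\|\mathcal{Q}\bm{e}^{k}\|^2)$ via Lemma \ref{Le5.6}. The same lemma renders the reaction term $-\|\mathcal{Q}\bm{e}^{k+\frac{1}{2}}\|^2\leq 0$, while Lemma \ref{Le5.5} renders the combined dispersion term $-\|\mathcal{P}\bm{e}^{k+\frac{1}{2}}\|^2\leq 0$. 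Discarding these two nonpositive contributions, replacing $k$ by $n$, and summing from $n=0$ to $k-1$ with $\bm{e}^0=0$ gives
$$\|\mathcal{Q}\bm{e}^{k}\|^2\leq 2\tau\sum_{n=0}^{k-1}\left(\bm{R}^{n+1},\bm{e}^{n+\frac{1}{2}}\right).$$

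I would then estimate the truncation term by Cauchy--Schwarz and Young's inequality, writing $\bm{e}^{n+\frac{1}{2}}=\frac{1}{2}(\bm{e}^{n+1}+\bm{e}^n)$ and invoking both bounds of Lemma \ref{Le5.7}, namely $\|\mathcal{Q}\bm{e}^{k}\|^2\geq(\frac{4\sqrt6}{3}-3)^2\|\bm{e}^{k}\|^2$ on the left and $\|\mathcal{Q}\bm{e}^{k}\|\leq\|\bm{e}^{k}\|$ when controlling the endpoint term. This converts the inequality into the discrete Gronwall form $\|\bm{e}^{k}\|^2\leq c_1\tau\sum_{n=0}^{k-1}\|\bm{e}^{n}\|^2+c_2 C_2^2 L_aL_b T(\tau^2+h_a^3+h_b^3)^2$ with explicit constants $c_1,c_2$. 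Applying Lemma \ref{Le4.4} (Gronwall) yields $\|\bm{e}^{k}\|^2\leq c_2 C_2^2 L_aL_b T\,e^{c_1 T}(\tau^2+h_a^3+h_b^3)^2$, and taking square roots with a crude bound on $c_1,c_2$ produces the claimed inequality.

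The main obstacle I expect is the bookkeeping in the truncation-term estimate: because the energy is measured in the $\mathcal{Q}$-norm rather than the plain $\|\cdot\|$ norm, one must use both directions of Lemma \ref{Le5.7} simultaneously --- the lower bound to recover $\|\bm{e}^{k}\|^2$ on the left and the upper bound on the endpoint term spawned by Young's inequality --- so that the coefficient of $\|\bm{e}^{k}\|^2$ on the right stays strictly below the left-hand coefficient and can be absorbed. Tracking these factors through to the explicit constants $32$ and $8$ in the final bound is then a routine, if delicate, computation.
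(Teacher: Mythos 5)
Your proposal is correct and takes essentially the same route as the paper's own proof: the same error equation, the same energy argument via Lemmas \ref{Le5.5}--\ref{Le5.7} (dropping the nonpositive reaction and dispersion terms), the same Young-inequality splitting $2\tau\left(\bm{R}^{n+1},\bm{e}^{n+\frac{1}{2}}\right)\leq 8\tau\|\bm{R}^{n+1}\|^2+\frac{\tau}{16}\left(\|\bm{e}^{n}\|^2+\|\bm{e}^{n+1}\|^2\right)$ with absorption of the endpoint term, and the same Gronwall conclusion producing the constants $16$ and $1024$, hence $32$ and $\exp(8T)$. One small correction: the absorption step uses only the lower bound $\|\mathcal{Q}\bm{e}^{k}\|^2\geq\left(\frac{4\sqrt{6}}{3}-3\right)^2\|\bm{e}^{k}\|^2$ of Lemma \ref{Le5.7}, since the endpoint term spawned by Young's inequality is already in the plain norm; the upper bound of that lemma is not actually needed.
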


\begin{proof}
Subtracting (\ref{eq36})--(\ref{eq38}) from (\ref{eq28})--(\ref{eq30}) leads to an
error system as follows,
\begin{eqnarray}\label{eq49}
 \mathcal{L}_x\mathcal{L}_y\delta_t e_{i,j}^{k+\frac{1}{2}}
 =-\mathcal{L}_x\mathcal{L}_ye_{i,j}^{k+\frac{1}{2}}+K_\alpha\mathcal{L}_y\delta_x^\alpha
e_{i,j}^{k+\frac{1}{2}} +K_\beta\mathcal{L}_x\delta_y^\beta
e_{i,j}^{k+\frac{1}{2}}
  +R_{i,j}^{k},\vspace{0.2 cm}\\1\leq i\leq M_a-1,\;1\leq j\leq M_b-1,\;0\leq k\leq N-1,\nonumber
  \end{eqnarray}
$$
\begin{array}{lll} \displaystyle
u_{i,j}^{0}=0,\;\;(x_i,y_j)\in{\overline{\Omega}}_h,\vspace{0.2 cm}\\ \displaystyle
u_{i,j}^{k}=0,\;(x_i,y_j)\in\partial{\Omega}_h,\;1\leq k\leq N.
\end{array}
$$

Taking the inner product of (\ref{eq49}) with $\bm{e}^{k+\frac{1}{2}}$ and
using the similar method in the proof of Theorem 4.2, we reach that
$$
\begin{array}{lll} \displaystyle
\left\|\mathcal{Q}\bm{e}^{k+1}\right\|^2\leq \displaystyle\left\|\mathcal{Q}\bm{e}^{k}\right\|^2
+2\tau\left(\bm{R}^{k+1},\bm{e}^{k+\frac{1}{2}}\right)\vspace{0.2 cm}\\
\leq \displaystyle\left\|\mathcal{Q}\bm{e}^{k}\right\|^2+8\tau\left\|\bm{R}^{k+1}\right\|^2
+\frac{1}{16}\tau\left(\left\|\bm{e}^{k}\right\|^2+\left\|\bm{e}^{k+1}\right\|^2\right)
\vspace{0.2 cm}\\
\leq \displaystyle\left\|\mathcal{Q}\bm{e}^{k}\right\|^2
+\frac{1}{16}\tau\left(\left\|\bm{e}^{k}\right\|^2+\left\|\bm{e}^{k+1}\right\|^2\right)+8\tau C_2^2L_aL_b\left(\tau^2+h_a^3+h_b^3\right)^2.
\end{array}
$$
Replacing $k$ by $n$ and summing up $n$ from 0 to $k-1$ on both sides of the above inequality
 and following Lemma \ref{Le5.7}, we reach that
$$
\begin{array}{lll} \displaystyle
||\bm{e}^{k}||^2&\leq&\displaystyle
\frac{18}{(16\sqrt{6}-33)(16\sqrt{6}-39)}\tau\sum_{n=0}^{k-1}||\bm{e}^{n}||^2\vspace{0.2 cm}\\
&&\displaystyle+\frac{1152}{(16\sqrt{6}-33)(16\sqrt{6}-39)}C_2^2TL_aL_b(\tau^2+h_a^3+h_b^3)^2\vspace{0.2 cm}\\
&\leq&\displaystyle
16\tau\sum_{n=0}^{k-1}||\bm{e}^{n}||^2
+1024C_2^2TL_aL_b(\tau^2+h_a^3+h_b^3)^2.
\end{array}
$$

So we have the following result via Lemma \ref{Le4.3},
$$
\begin{array}{lll} \displaystyle
||\bm{e}^{k}||\leq
32c_2\sqrt{TL_aL_b}\exp\left(
8T\right)(\tau^2+h_a^3+h_b^3).
\end{array}
$$
The proof is finally shown in the end.
\end{proof}

\section{Numerical examples}

In this section, the validity and convergence orders of the numerical
 algorithms constructed in this paper are demonstrated by several numerical tests.
\begin{example}\label{Ex1}
Consider function $u(x)=x^2(1-x)^2.$ The exact expression at $x=0.5$ is given by
$$
\begin{array}{lll}
\displaystyle \frac{\partial^\alpha
u(x)}{\partial{|x|^\alpha}}\mid_{x=0.5}&= &\displaystyle -\frac{1}{2\cos\left(\frac{\pi\alpha}{2}\right)}
\left\{\frac{\Gamma(3)}{\Gamma(3-\alpha)}\left(\frac{1}{2}\right)^{1-\alpha}
-\frac{2\Gamma(4)}{\Gamma(4-\alpha)}\left(\frac{1}{2}\right)^{2-\alpha}\right.\vspace{0.2 cm}\\&&\displaystyle
\left.+\frac{\Gamma(5)}{\Gamma(5-\alpha)}\left(\frac{1}{2}\right)^{3-\alpha}\right\},\;\;\;x\in[0,1].
\end{array}
$$
\end{example}

Choosing different spatial stepsizes $h$, we compute Riesz derivative of function $u(x)$ using
 numerical formulas (\ref{eq7}), (\ref{eq8}) and (\ref{eq9}), respectively. Tables \ref{Tab1}, \ref{Tab2},
  and \ref{Tab3} list the absolute errors and
 numerical convergence orders at $x=0.5$ for different orders $\alpha$ in (1,2). From these results,
 one can see that the numerical results are in line with the theoretical order.

\begin{table}\renewcommand\arraystretch{1.1}
 \begin{center}
 \caption{ The absolute errors and convergence orders of Example \ref{Ex1}
  by numerical formula (\ref{eq7}).}\label{Tab1}
 \vspace{0.2 cm}
 \begin{footnotesize}
\begin{tabular}{c c c c c c }\hline
  $\alpha$ &\; $h$\;&  \textrm{the absolute errors}&\;\;\;\;$\textrm {the convergence orders}$
  \\
  \hline \vspace{0.1 cm}
  $1.1 $& $\frac{1}{20}$ & 1.740717e-04	 \;\; &  ---\\ \vspace{0.1 cm}
  $$  & $\frac{1}{40}$&   2.185595e-05	 \;\; &	2.9936 \\
  $$& $\frac{1}{80}$ &  2.742123e-06	 \;\; &	2.9947 \\
  $$& $\frac{1}{160}$ &  3.434158e-07	 \;\; &	2.9973\\
   $$& $\frac{1}{320}$ &  4.296784e-08	 \;\; &	2.9986\\\hline
   $1.3 $& $\frac{1}{20}$ & 1.756079e-04 \;\;& ---\\ \vspace{0.1 cm}
  $$  & $\frac{1}{40}$& 2.198613e-05\;\;&		2.9977\\
   $$  & $\frac{1}{80}$&2.751417e-06	\;\;&	2.9983\\
  \vspace{0.1 cm}
  $$& $\frac{1}{160}$ &3.441531e-07	\;\;&	2.9991\\
   $$& $\frac{1}{320}$ & 4.303416e-08\;\;&		2.9995\\\hline
  $1.5 $& $\frac{1}{20}$ & 1.377134e-04	\;\; &     ---\\ \vspace{0.1 cm}
  $$  & $\frac{1}{40}$&  1.716087e-05	\;\; &  		3.0045\\
  $$  & $\frac{1}{80}$&2.143372e-06		\;\; &  	3.0012\\
  $$& $\frac{1}{160}$ & 2.678606e-07	\;\; &  		3.0003  \\
   $$& $\frac{1}{320}$ & 3.348027e-08		\;\; &  	3.0001\\\hline
   $1.7 $& $\frac{1}{20}$ &7.211650e-05	 \;\; &      ---\\ \vspace{0.1 cm}
  $$  & $\frac{1}{40}$& 8.991024e-06 \;\; & 		3.0038 \\
   $$  & $\frac{1}{80}$& 1.123719e-06	 \;\; & 	3.0002 \\
  $$& $\frac{1}{160}$ & 1.404937e-07	 \;\; & 	2.9997\\
   $$& $\frac{1}{320}$ &  1.756457e-08	 \;\; & 	2.9998\\ \hline
$1.9 $& $\frac{1}{20}$ &	1.056422e-05	 \;\; &      ---\\ \vspace{0.1 cm}
  $$  & $\frac{1}{40}$& 1.364672e-06	 \;\; &	2.9526 \\
   $$  & $\frac{1}{80}$& 1.735867e-07	 \;\; &	2.9748\\
  $$& $\frac{1}{160}$ & 2.189369e-08	 \;\; &	2.9871\\
   $$& $\frac{1}{320}$ & 2.749249e-09	 \;\; &	2.9934\\\hline
\end{tabular}
 \end{footnotesize}
 \end{center}
 \end{table}

\begin{table}\renewcommand\arraystretch{1.1}
 \begin{center}
 \caption{  The absolute errors and convergence orders of Example \ref{Ex1}
  by numerical formula (\ref{eq8}).}\label{Tab2}
 \vspace{0.2 cm}
 \begin{footnotesize}
\begin{tabular}{c c c c c c }\hline
  $\alpha$ &\; $h$\;&  \textrm{the absolute errors}&\;\;\;\;$\textrm {the convergence orders}$
  \\
  \hline \vspace{0.1 cm}
  $1.1 $& $\frac{1}{20}$ & 5.290778e-02		 \;\; &  ---\\ \vspace{0.1 cm}
  $$  & $\frac{1}{40}$&  6.548041e-03 \;\; & 		3.0143 \\
  $$& $\frac{1}{80}$ & 8.150577e-04	 \;\; & 	3.0061 \\
  $$& $\frac{1}{160}$ &  1.016718e-04 \;\; & 		3.0030\\
   $$& $\frac{1}{320}$ & 1.269602e-05 \;\; & 		3.0015\\ \hline
   $1.3 $& $\frac{1}{20}$ & 2.033985e-02	 \;\;& ---\\ \vspace{0.1 cm}
  $$  & $\frac{1}{40}$& 2.512801e-03 \;\;& 		3.0169\\
   $$  & $\frac{1}{80}$&3.117365e-04 \;\;& 		3.0109\\
  $$& $\frac{1}{160}$ &3.881109e-05	 \;\;& 	3.0058\\
   $$& $\frac{1}{320}$ & 4.841522e-06 \;\;& 		3.0029\\\hline
  $1.5 $& $\frac{1}{20}$ & 1.263828e-02		\;\; &     ---\\ \vspace{0.1 cm}
  $$  & $\frac{1}{40}$&  1.583605e-03	\;\; &	2.9965\\
  $$  & $\frac{1}{80}$&1.966563e-04	\;\; &	3.0095\\
  $$& $\frac{1}{160}$ & 2.448135e-05	\;\; &	3.0059 \\
   $$& $\frac{1}{320}$ &3.053477e-06	\;\; &	3.0032\\ \hline
   $1.7 $& $\frac{1}{20}$ &7.701877e-03		 \;\; &      ---\\ \vspace{0.1 cm}
  $$  & $\frac{1}{40}$&9.893186e-04	 \;\; &	2.9607 \\
   $$  & $\frac{1}{80}$&1.233352e-04 \;\; &		3.0039 \\
  $$& $\frac{1}{160}$ &1.537077e-05	 \;\; &	3.0043\\
   $$& $\frac{1}{320}$ &  1.917897e-06	 \;\; &	3.0026\\\hline
$1.9 $& $\frac{1}{20}$ &2.787724e-03		 \;\; &      ---\\ \vspace{0.1 cm}
  $$  & $\frac{1}{40}$& 3.697284e-04 \;\; & 		2.9145 \\
   $$  & $\frac{1}{80}$&4.637689e-05	 \;\; & 	2.9950\\
  $$& $\frac{1}{160}$ & 5.791288e-06	 \;\; & 	3.0014\\
   $$& $\frac{1}{320}$ &7.231609e-07	 \;\; & 	3.0015\\
 \hline
\end{tabular}
 \end{footnotesize}
 \end{center}
 \end{table}

\begin{table}\renewcommand\arraystretch{1.1}
 \begin{center}
 \caption{ The absolute errors and convergence orders of Example \ref{Ex1}
  by numerical formula (\ref{eq9}).}\label{Tab3}
 \vspace{0.2 cm}
 \begin{footnotesize}
\begin{tabular}{c c c c c c }\hline
  $\alpha$ &\; $h$\;&  \textrm{the absolute errors}&\;\;\;\;$\textrm {the convergence orders}$
  \\
  \hline \vspace{0.1 cm}
  $1.1 $& $\frac{1}{20}$ & 8.281680e-07	\;\; &  ---\\ \vspace{0.1 cm}
  $$  & $\frac{1}{40}$&  5.167207e-08\;\; &		4.0025 \\
  $$& $\frac{1}{80}$ & 3.218255e-09	\;\; &	4.0050 \\
  $$& $\frac{1}{160}$ &  2.007975e-10	\;\; &	4.0025\\\hline
   $1.3 $& $\frac{1}{20}$ & 8.898742e-07		 \;\;& ---\\ \vspace{0.1 cm}
  $$  & $\frac{1}{40}$& 5.777396e-08	 \;\;&	3.9451\\
   $$  & $\frac{1}{80}$&3.654194e-09 \;\;&		3.9828\\
  $$& $\frac{1}{160}$ &2.294926e-10	 \;\;&	3.9930\\\hline
  $1.5 $& $\frac{1}{20}$ & 5.084772e-07			\;\; &     ---\\ \vspace{0.1 cm}
  $$  & $\frac{1}{40}$&  3.725522e-08		\;\; &  	3.7707\\
  $$  & $\frac{1}{80}$&2.454356e-09		\;\; &  	3.9240\\
  $$& $\frac{1}{160}$ & 1.567338e-10	\;\; &  		3.9690 \\\hline
   $1.7 $& $\frac{1}{20}$ &1.822972e-07			 \;\; &      ---\\ \vspace{0.1 cm}
  $$  & $\frac{1}{40}$&1.692478e-08	 \;\; &	3.4291 \\
   $$  & $\frac{1}{80}$&1.191028e-09 \;\; &		3.8289 \\
  $$& $\frac{1}{160}$ &7.878076e-11	 \;\; &	3.9182\\\hline
$1.9 $& $\frac{1}{20}$ &9.867011e-08  \;\; &   ---\\ \vspace{0.1 cm}
  $$  & $\frac{1}{40}$& 7.533874e-09	 \;\; &	3.7111 \\
   $$  & $\frac{1}{80}$&5.041596e-10	 \;\; &	3.9014\\
  $$& $\frac{1}{160}$ & 3.322587e-11	 \;\; &	3.9235\\\hline
\end{tabular}
 \end{footnotesize}
 \end{center}
 \end{table}

\begin{example}\label{Ex2}
We consider the following
 one-dimensional Riesz spatial fractional reaction-dispersion equation,
  $$\left\{
\begin{array}{lll}
\displaystyle
 \frac{\partial
u(x,t)}{\partial t}=-u(x,t)+e^{-12}
 \frac{\partial^\alpha
u(x,t)}{\partial{|x|^\alpha}}+f(x,t),\;(x,t)\in(0,1)\times(0,1],\vspace{0.2 cm}\\ \displaystyle
u(x,0)=0,\;\;x\in(0,1),\vspace{0.2 cm}\\ \displaystyle
u(0,t)=u(1,t)=0,\;t\in(0,1],
\end{array}\right.
$$
where the source term $f(x,t)$ is
$$
\begin{array}{lll}
\displaystyle
2e^{t} x^6(1-x)^6+
\frac{\sin t}{2\cos\left(\frac{\pi}{2}\alpha\right)}
\left\{\frac{\Gamma(7)}{\Gamma(7-\alpha)}\left[x^{6-\alpha}+(1-x)^{6-\alpha}\right]\right. \vspace{0.2 cm}\\ \displaystyle
-\frac{6\Gamma(8)}{\Gamma(8-\alpha)}\left[x^{7-\alpha}+(1-x)^{7-\alpha}\right]
+\frac{15\Gamma(9)}{\Gamma(9-\alpha)}\left[x^{8-\alpha}+(1-x)^{8-\alpha}\right]
\vspace{0.2 cm}\\ \displaystyle-
\frac{20\Gamma(10)}{\Gamma(10-\alpha)}\left[x^{9-\alpha}+(1-x)^{9-\alpha}\right]
+\frac{15\Gamma(11)}{\Gamma(11-\alpha)}\left[x^{10-\alpha}+(1-x)^{10-\alpha}\right]
\vspace{0.2 cm}\\ \displaystyle\left.
-\frac{6\Gamma(12)}{\Gamma(12-\alpha)}\left[x^{11-\alpha}+(1-x)^{11-\alpha}\right]
+\frac{\Gamma(13)}{\Gamma(13-\alpha)}\left[x^{12-\alpha}+(1-x)^{12-\alpha}\right]\right\}.
\end{array}
$$
The exact solution of this equation is $u(x,t)=e^{t} x^6(1-x)^6$ and satisfies the according
initial and boundary values conditions.
\end{example}

\begin{table}\renewcommand\arraystretch{1.1}
 \begin{center}
 \caption{ The absolute errors (TAEs), temporal convergence
order (TCO) and spatial convergence order (SCO) of Example \ref{Ex2} by difference scheme (\ref{eq15})--(\ref{eq17}).}\label{Tab.4}
 \vspace{0.2 cm}
 \begin{footnotesize}
\begin{tabular}{c c c c c c }\hline
   & \;\;\;\; \;\; &\;\;\;\;\;\;&\;\;\;\;\;\;&\;\;\;\;\;\;&\;\;\;\;\;\;\\
  $\alpha$ &\;$\tau$,\;$h$\;&  \;\;TAEs\;\;\;\; &\;\;\;\; TCO\;\;\;\; & SCO\\\hline \vspace{0.1 cm}
  $1.1 $& $\tau=\frac{1}{4},h=\frac{1}{4}$ &     2.984674e-06    &  ---&  ---\\ \vspace{0.1 cm}
  $$  & $\tau=\frac{\sqrt{2}}{16},h=\frac{1}{8}$&    3.613655e-07	&	2.0307	&	3.0460\\ \vspace{0.1 cm}
  $$& $\tau=\frac{1}{32},h=\frac{1}{16}$ &      4.685713e-08	&	1.9647	&	2.9471\\ \vspace{0.1 cm}
$$  & $\tau=\frac{\sqrt{2}}{128},h=\frac{1}{32}$&      5.813993e-09	&	2.0071	&	3.0107\\ \vspace{0.1 cm}
  $$& $\tau=\frac{1}{256},h=\frac{1}{64}$ &       7.321694e-10	&	1.9929	&	2.9893\\ \hline\vspace{0.1 cm}
 $1.3 $& $\tau=\frac{1}{4},h=\frac{1}{4}$ &    2.984597e-06		&  ---&  ---\\ \vspace{0.1 cm}
  $$  & $\tau=\frac{\sqrt{2}}{16},h=\frac{1}{8}$&    3.617522e-07	&	2.0296	&	3.0445\\ \vspace{0.1 cm}
  $$& $\tau=\frac{1}{32},h=\frac{1}{16}$ &      4.690406e-08	&	1.9648		&2.9472\\ \vspace{0.1 cm}
$$  & $\tau=\frac{\sqrt{2}}{128},h=\frac{1}{32}$&     5.819491e-09	&	2.0072	&	3.0107\\ \vspace{0.1 cm}
  $$& $\tau=\frac{1}{256},h=\frac{1}{64}$ &       7.328387e-10	&	1.9929	&	2.9893\\ \hline\vspace{0.1 cm}
 $1.5 $& $\tau=\frac{1}{4},h=\frac{1}{4}$ &     2.981516e-06		&  ---&  ---\\ \vspace{0.1 cm}
  $$  & $\tau=\frac{\sqrt{2}}{16},h=\frac{1}{8}$&    3.616854e-07	&	2.0288	&	3.0432\\ \vspace{0.1 cm}
  $$& $\tau=\frac{1}{32},h=\frac{1}{16}$ &      4.690789e-08	&	1.9646	&	2.9468\\ \vspace{0.1 cm}
$$  & $\tau=\frac{\sqrt{2}}{128},h=\frac{1}{32}$&    5.819848e-09	&	2.0072	&	3.0108\\ \vspace{0.1 cm}
  $$& $\tau=\frac{1}{256},h=\frac{1}{64}$ &       7.328573e-10	&	1.9929	&	2.9894\\ \hline\vspace{0.1 cm}
  $1.7 $& $\tau=\frac{1}{4},h=\frac{1}{4}$ &    2.974813e-06		&  ---&  ---\\ \vspace{0.1 cm}
  $$  & $\tau=\frac{\sqrt{2}}{16},h=\frac{1}{8}$&    3.609314e-07	&	2.0287	&	3.0430\\ \vspace{0.1 cm}
  $$& $\tau=\frac{1}{32},h=\frac{1}{16}$ &     4.683820e-08	&	1.9640		&2.9460\\ \vspace{0.1 cm}
$$  & $\tau=\frac{\sqrt{2}}{128},h=\frac{1}{32}$&    5.811874e-09	&	2.0071	&	3.0106\\ \vspace{0.1 cm}
  $$& $\tau=\frac{1}{256},h=\frac{1}{64}$ &       7.318735e-10	&	1.9929	&	2.9893\\ \hline\vspace{0.1 cm}
 $1.9 $& $\tau=\frac{1}{4},h=\frac{1}{4}$ &    2.963689e-06	    &  ---&  ---\\ \vspace{0.1 cm}
  $$  & $\tau=\frac{\sqrt{2}}{16},h=\frac{1}{8}$&   3.593385e-07	&	2.0293	&	3.0440\\ \vspace{0.1 cm}
  $$& $\tau=\frac{1}{32},h=\frac{1}{16}$ &     4.668265e-08	&	1.9629	&	2.9444\\ \vspace{0.1 cm}
$$  & $\tau=\frac{\sqrt{2}}{128},h=\frac{1}{32}$&    5.795636e-09	&	2.0066	&	3.0098\\ \vspace{0.1 cm}
  $$& $\tau=\frac{1}{256},h=\frac{1}{64}$ &     7.300171e-10	&	1.9926	&	2.9890\\ \hline\vspace{0.1 cm}
\end{tabular}
 \end{footnotesize}
 \end{center}
 \end{table}

Using numerical scheme (\ref{eq15})--(\ref{eq17}), we present the absolute errors and the corresponding space and time
convergence orders with different stepsizes in Table 4. It can be found that the convergence orders of
 scheme (\ref{eq15})--(\ref{eq17}) are almost second- and
 third-order in time and space directions, respectively, which is in agreement with the theoretical convergence order.

\begin{example}\label{Ex3}
We consider the following
 two-dimensional Riesz spatial fractional reaction-dispersion equation,
  $$\left\{
\begin{array}{lll}
\displaystyle
 \frac{\partial{{}u(x,y,t)}}{\partial{t}}
 =-u(x,y,t)+\pi^{-8}\left(\frac{\partial^\alpha
u(x,y,t)}{\partial{|x|^\alpha}}+\frac{\partial^\beta
u(x,y,t)}{\partial{|y|^\beta}}\right)
  +f(x,y,t),\\ \displaystyle\hspace{8.5 cm}(x,y;t)\in\Omega\times(0,1],\\ \displaystyle
u(x,y,0)=0,\;\;(x,y)\in\bar{\Omega},\vspace{0.2 cm}\\ \displaystyle
u(x,y,t)=0,\;(x,y;t)\in\partial\Omega\times(0,1],
\end{array}\right.
$$
where $\Omega=[0,1]\times[0,1]$, and the source term $f(x,y,t)$ is
$$
\begin{array}{lll}
\displaystyle
3e^{2t}x^6(1-x)^6y^6(1-y)^6+
\frac{e^{2t}y^6(1-y)^6}{2\cos\left(\frac{\pi}{2}\alpha\right)}
\left\{\frac{\Gamma(7)}{\Gamma(7-\alpha)}\left[x^{6-\alpha}+(1-x)^{6-\alpha}\right]\right. \vspace{0.2 cm}\\ \displaystyle
-\frac{6\Gamma(8)}{\Gamma(8-\alpha)}\left[x^{7-\alpha}+(1-x)^{7-\alpha}\right]
+\frac{15\Gamma(9)}{\Gamma(9-\alpha)}\left[x^{8-\alpha}+(1-x)^{8-\alpha}\right]
\vspace{0.2 cm}\\ \displaystyle-
\frac{20\Gamma(10)}{\Gamma(10-\alpha)}\left[x^{9-\alpha}+(1-x)^{9-\alpha}\right]
+\frac{15\Gamma(11)}{\Gamma(11-\alpha)}\left[x^{10-\alpha}+(1-x)^{10-\alpha}\right]
\vspace{0.2 cm}\\ \displaystyle\left.
-\frac{6\Gamma(12)}{\Gamma(12-\alpha)}\left[x^{11-\alpha}+(1-x)^{11-\alpha}\right]
+\frac{\Gamma(13)}{\Gamma(13-\alpha)}\left[x^{12-\alpha}+(1-x)^{12-\alpha}\right]\right\}\vspace{0.2 cm}\\ \displaystyle+
\frac{e^{2t}x^6(1-x)^6}{2\cos\left(\frac{\pi}{2}\beta\right)}
\left\{\frac{\Gamma(7)}{\Gamma(7-\beta)}\left[y^{6-\beta}+(1-y)^{6-\beta}\right]\right. \vspace{0.2 cm}\\ \displaystyle
-\frac{6\Gamma(8)}{\Gamma(8-\beta)}\left[y^{7-\beta}+(1-y)^{7-\beta}\right]
+\frac{15\Gamma(9)}{\Gamma(9-\beta)}\left[y^{8-\beta}+(1-y)^{8-\beta}\right]
\vspace{0.2 cm}\\ \displaystyle
-\frac{20\Gamma(10)}{\Gamma(10-\beta)}\left[y^{9-\beta}+(1-y)^{9-\beta}\right]
+\frac{15\Gamma(11)}{\Gamma(11-\beta)}\left[y^{10-\beta}+(1-y)^{10-\beta}\right]
\vspace{0.2 cm}\\ \displaystyle\left.
-\frac{6\Gamma(12)}{\Gamma(12-\beta)}\left[y^{11-\beta}+(1-y)^{11-\beta}\right]
+\frac{\Gamma(13)}{\Gamma(13-\beta)}\left[y^{12-\beta}+(1-y)^{12-\beta}\right]\right\}.
\end{array}
$$
The exact solution of this equation is $u(x,t)=e^{2t}x^6(1-x)^6y^6(1-y)^6$ and satisfies the corresponding
initial and boundary values conditions.
\end{example}

\begin{table}\renewcommand\arraystretch{1.1}\label{Tab5}
 \begin{center}
 \caption{ The absolute errors (TAEs), temporal convergence
order (TCO) and spatial convergence order (SCO) of Example \ref{Ex3} by difference scheme (\ref{eq36})--(\ref{eq38}).}\label{Tab.5}
 \vspace{0.2 cm}
 \begin{footnotesize}
\begin{tabular}{c c c c c c }\hline
   & \;\;\;\; \;\; &\;\;\;\;\;\;&\;\;\;\;\;\;&\;\;\;\;\;\;&\;\;\;\;\;\;\\
  $\alpha,\beta$ &\;$\tau$,\;$h_a,h_b$\;&  \;\;TAEs\;\;\;\; &\;\;\;\; TCO\;\;\;\; & SCO\\\hline \vspace{0.1 cm}
  $\alpha=1.1,\beta=1.8 $& $\tau=\frac{1}{4},h_a=h_b=\frac{1}{4}$ &    7.150284e-09	    &  ---&  ---\\ \vspace{0.1 cm}
  $$  & $\tau=\frac{\sqrt{2}}{16},h_a=h_b=\frac{1}{8}$&    8.680618e-10	&	2.0281	&	3.0421\\ \vspace{0.1 cm}
  $$& $\tau=\frac{1}{32},h_a=h_b=\frac{1}{16}$ &     1.155609e-10	&	1.9394	&	2.9091\\ \vspace{0.1 cm}
$$  & $\tau=\frac{\sqrt{2}}{128},h_a=h_b=\frac{1}{32}$&     1.428060e-11	&	2.0110	&	3.0165\\ \hline\vspace{0.1 cm}
 $\alpha=1.3,\beta=1.6$& $\tau=\frac{1}{4},h_a=h_b=\frac{1}{4}$ &    7.221370e-09		&  ---&  ---\\ \vspace{0.1 cm}
  $$  & $\tau=\frac{\sqrt{2}}{16},h_a=h_b=\frac{1}{8}$&    8.805609e-10	&	2.0239	&	3.0358\\ \vspace{0.1 cm}
  $$& $\tau=\frac{1}{32},h_a=h_b=\frac{1}{16}$ &      1.168858e-10	&	1.9422	&	2.9133\\ \vspace{0.1 cm}
$$  & $\tau=\frac{\sqrt{2}}{128},h_a=h_b=\frac{1}{32}$&     1.442860e-11	&	2.0121	&	3.0181\\ \hline\vspace{0.1 cm}
 $\alpha=1.5,\beta=1.5$& $\tau=\frac{1}{4},h_a=h_b=\frac{1}{4}$ &    7.219848e-09	&  ---&  ---\\ \vspace{0.1 cm}
  $$  & $\tau=\frac{\sqrt{2}}{16},h_a=h_b=\frac{1}{8}$&    8.823037e-10	&	2.0217	&	3.0326\\ \vspace{0.1 cm}
  $$& $\tau=\frac{1}{32},h_a=h_b=\frac{1}{16}$ &      1.171206e-10		&1.9422		&2.9133\\ \vspace{0.1 cm}
$$  & $\tau=\frac{\sqrt{2}}{128},h_a=h_b=\frac{1}{32}$&   1.445519e-11	&	2.0122	&	3.0183\\\hline\vspace{0.1 cm}
  $\alpha=1.7,\beta=1.4$& $\tau=\frac{1}{4},h_a=h_b=\frac{1}{4}$ &    7.181389e-09			&  ---&  ---\\ \vspace{0.1 cm}
  $$  & $\tau=\frac{\sqrt{2}}{16},h_a=h_b=\frac{1}{8}$&   8.771397e-10	  & 	2.0223	  & 	3.0334\\ \vspace{0.1 cm}
  $$& $\tau=\frac{1}{32},h_a=h_b=\frac{1}{16}$ &    1.165997e-10	  & 	1.9408	  & 	2.9112\\ \vspace{0.1 cm}
$$  & $\tau=\frac{\sqrt{2}}{128},h_a=h_b=\frac{1}{32}$&    1.439652e-11	  & 	2.0118	  & 	3.0178\\ \hline\vspace{0.1 cm}
 $\alpha=1.9,\beta=1.2$& $\tau=\frac{1}{4},h_a=h_b=\frac{1}{4}$ &   7.102704e-09		    &  ---&  ---\\ \vspace{0.1 cm}
  $$  & $\tau=\frac{\sqrt{2}}{16},h_a=h_b=\frac{1}{8}$&  8.628679e-10	& 	2.0274	& 	3.0412\\ \vspace{0.1 cm}
  $$& $\tau=\frac{1}{32},h_a=h_b=\frac{1}{16}$ &    1.150916e-10	& 	1.9376		& 2.9064\\ \vspace{0.1 cm}
$$  & $\tau=\frac{\sqrt{2}}{128},h_a=h_b=\frac{1}{32}$&   1.423779e-11	& 	2.0100	& 	3.0150\\\hline\vspace{0.1 cm}
\end{tabular}
 \end{footnotesize}
 \end{center}
 \end{table}

We solve this problem through method (\ref{eq36})--(\ref{eq38}) for different values of $\alpha,\beta$.
From Table \ref{Tab5}, one can see that the convergence orders of scheme (\ref{eq36})--(\ref{eq38}) are
$\mathcal{O}(\tau^2)$ in temporal direction and $\mathcal{O}(h_a^3+h_b^3)$ in spatial directions. It also coincides
with the theoretical analysis.

\section{Conclusion}
Based on the novel generating functions, we obtain several kinds of (generalized) high-order fractional-compact numerical approximation formulas
for Riemann-Liouville and/or Riesz derivatives with order lying in (1,2). For further checking the efficiency of these
high-order formulas, we apply the 3th-order formula to solve one- and two-dimensional
Riesz spatial fractional reaction dispersion equations. Both theoretical analysis and numerical tests show that the developed
numerical algorithms are efficient
and accurate.

%



\begin{thebibliography}{99}
 \normalsize

\bibitem{Alikhanov}
A.A. Alikhanov, A new difference scheme for the time fractional diffusion
equation. \emph{J. Comput. Phys.,} 280 (2015), 424--438.

\bibitem{Baeumer}
B. Baeumer, M. Kov\'{a}cs and H. Sankaranarayanan, Higher order Gr\"{u}nwald approximations of fractional derivatives
and fractional powers of operators. \emph{Trans. Amer. Math. Soc.,} 367 (2015), 813--834.

\bibitem{Baleanu} D. Baleanu, K. Diethelm, E. Scalas and J. J. Trujillo, \emph{Fractional Cal-
culus: Models and Numerical Methods}, World Scientific, (2012).

\bibitem{Cao}J.X. Cao, C.P. Li, and Y.Q. Chen, High-order approximation to Caputo
derivatives and Caputo-type advection-diffusion equations (II). Fract.
Calc. Appl. Anal., 3 (2015), 735--761; DOI: 10.1515/fca-2015-
0045; http://www.degruyter.com/view/j/fca.2015.18.issue-3/
issue-files/fca.2015.18.issue-3.xml.

\bibitem{Celik}C. \c{C}elik,  M. Duman, Crank-Nicolson method for the fractional diffusion equation with the Riesz
fractional derivative. \emph{J. Comput. Phys.,} 231 (2012), 1743--1750.


\bibitem{Dimitrov1} Y. Dimitrov, A second order approximation for the Caputo fractional
derivative, arXiv:1502.00719 (2015).


\bibitem{Dimitrov2} Y. Dimitrov, Higher-order numerical solutions of the fractional
 relaxation-oscillation equation using fractional integration, arXiv:1603.08733 (2016).

\bibitem{Ding1}H.F. Ding, C.P. Li, and Y.Q. Chen, High-order algorithms for Riesz
derivative and their applications (I). \emph{Abstr. Appl. Anal.,} 2014 (2014),
Article ID 653797, 1--17.

\bibitem{Ding2}H.F. Ding, C.P. Li, and Y.Q. Chen, High-order algorithms for Riesz derivative and their
 applications (II). \emph{J. Comput. Phys.,} 293 (2015), 218--237.


\bibitem{Ding3}H.F. Ding, C.P. Li, High-order algorithms for Riesz derivative and their
applications (III). \emph{Fract. Calc. Appl. Anal.,} 19 (2016), 19--55; DOI: 10.1515/fca-2016-0003.

\bibitem{Ding4}H.F. Ding, C.P. Li, High-order numerical algorithms for Riesz derivatives
via constructing new generating functions, arXiv:1505.03335, (2015).

\bibitem{Felmer} P. Felmer, A. Quaas and J. Tan, Positive solutions of the nonlinear Schr\"{o}dinger equation with the
 fractional Laplacian. \emph{Proc. Roy. Soc. Edinburgh Sect. A.,} 142
(2012), 1237--1262.

\bibitem{Gao} G. Gao, Z. Sun, and H. Zhang, A new fractional numerical differentiation
formula to approximate the Caputo fractional derivative and its
applications. \emph{J. Comput. Phys.,} 259 (2014), 33--50.

\bibitem{Garrappa} R. Garrappa, M. Popolizio, On the use of matrix functions for fractional partial differential equations.
\emph{Math. Comput. Simulat.,} 81 (2011), 1045--1056.

\bibitem{Gorenflo}R. Gorenflo, F. Mainardi, Approximation of l\'{e}vy-feller diffusion by
random walk. \emph{J. Anal. Appl.,} 18 (1999), 231--246.

\bibitem{Ilic}M. Ili\'{c}, F. Liu, I. Turner, V. Anh, Numerical approximation of a fractional-in-space diffusion
equation, I. \emph{Fract. Calc. Appl. Anal.,} 8 (2005), 323--341.

\bibitem{Ji}C.C. Ji, Z.Z. Sun, The high-order compact numerical algorithms for the two-
dimensional fractional sub-diffusion equation, \emph{Appl. Math. Comput.,} 269 (2015),
775--791.

\bibitem{Jin} B. Jin, R. Lazarov and Z. Zhou, An analysis of the L1 scheme for the
subdiffusion equation with nonsmooth data. \emph{IMA J. Numer. Anal.,} 36
(2016), 197--221.

\bibitem{Laub}A.J. Laub, \emph{Matrix Analysis for Scientists and Engineers}, Society for Industrial
and Applied Mathematics, Philadelphia, PA (2005).

\bibitem{Li}
C.P. Li, F.H. Zeng, \emph{Numerical Methods for Fractional Calculus}.
Chapman and Hall/CRC, Boca Raton, USA (2015).

\bibitem{Longhi} S. Longhi, Fractional Schr\"{o}dinger equation in optics. \emph{Opt. Lett.,} 40 (2015), 1117--1120.

\bibitem{Lubich}C. Lubich, Discretized fractional calculus. \emph{SIAM J. Math. Anal.,} 17
(1986), 704--719.


\bibitem{Meerschaert}M. M. Meerschaert, C. Tadjeran, Finite difference approximations for fractional advection-dispersion flow
equations. \emph{J. Comput. Appl. Math.,} 172 (2004), 65--77.

\bibitem{Ortigueira}M. D. Ortigueira, Riesz potential operators and inverses via fractional centred derivatives. \emph{Int. J. Math.
Sci.} (2006), 1--12.


\bibitem{Pagnini} G. Pagnini, P. Paradisi, A stochastic solution with Gaussian stationary increments of the
 symmetric space-time fractional diffusion equation. \emph{Fract.
Calc. Appl. Anal.,} 2 (2016), 408--440; DOI: 10.1515/fca-2016-0022,
 http://www.degruyter.com/view/j/fca.2016.19.issue-2/fca-2016-0022/fca-2016-0022.xml.


\bibitem{Quarteroni} A. Quarteroni, R. Sacco, F. Saleri, \emph{Numerical
Mathematics}. Springer, New York (2007).

\bibitem{Samko}S. G. Samko, A. A. Kilbas and O. I. Marichev, \emph{Fractional Integrals and Derivatives, Theory and
Applications}. Gordon and Breach: London, UK (1993).

\bibitem{Secchi} S. Secchi, M. Squassina, Soliton dynamics for fractional Schr\"{o}dinger equations.
 \emph{Appl. Anal.,} 93 (2014), 1702--1729.

\bibitem{Shen}S. Shen, F. Liu, V. Anh,  I. Turner and J. Chen, A novel numerical approximation for the Riesz space
fractional advection-dispersion equation. \emph{IMA J. Appl. Math.,} 79 (2014), 431--444.

\bibitem{Stickler}B.A. Stickler, Potential condensed-matter realization of space-fractional quantum mechanics:
the one-dimensional L\'{e}vy crystal. \emph{Phys. Rev. E.,} 88 (2013),
012120.

\bibitem{Tadjeran} C. Tadjeran, M. M. Meerschaert, H. P. Scheffler, A second-order accurate numerical approximation for the
fractional diffusion equation. \emph{J. Comput. Phys.} 213 (2006), 205--213.

\bibitem{Tian} W. Tian, H. Zhou and W. Deng, A class of second order difference approximation for solving space fractional
diffusion equations. \emph{Math. Comput.,} 84 (2015), 1703--1727.

\bibitem{Wang}
P. Wang, C. Huang, An energy conservative difference scheme for the nonlinear fractional Schr\"{o}dinger equations.
\emph{J. Comput. Phys.,} 293 (2015), 238--251.

\bibitem{Yan} Y. Yan, K. Pal and N. J. Ford, Higher order numerical methods for
solving fractional differential equations. \emph{BIT.,} 54 (2014), 555--584.


\bibitem{Ye}H. Ye, F. Liu, V. Anh, Compact difference scheme for distributed-order
time-fractional diffusion-wave equation on bounded domains. \emph{J.
Comput. Phys.,} 298 (2015), 652--660.



\bibitem{Yang}Q. Yang, F. Liu and I. Turner, Numerical methods for fractional partial differential equations with Riesz
space fractional derivatives. \emph{Appl. Math. Model.,} 34 (2010), 200--218.


\bibitem{Zhang}Y.X. Zhang, H.F. Ding,
Improved matrix transform method for the Riesz space fractional reaction dispersion equation.
\emph{J. Comput. Appl. Math.,} 260 (2014), 266--280.



\bibitem{Zhou} H. Zhou, W. Tian and W. Deng, Quasi-compact finite difference schemes for
 space fractional diffusion equations.
\emph{J. Sci. Comput.,} 56 (2013), 45--66.




\end{thebibliography}
  \end{document}